\tikzset{
	edge/.style={->,> = latex'}
}
\definecolor{darkblue}{rgb}{0.0,0,0.7} % darkblue color
\definecolor{lightblue}{rgb}{0,135,147}
\definecolor{red}{rgb}{1,0,0}
\definecolor{darkred}{rgb}{0.7,0,0} % darkred color
\definecolor{lightgrey}{rgb}{0.7,0.7,0.7} % darkred color
\newcommand{\Ideal}{\mathcal{I}}
\newtheorem{theorem}{Theorem}[section]
\newtheorem{proposition}[theorem]{Proposition}
\newtheorem{corollary}[theorem]{Corollary}
\newtheorem{lemma}[theorem]{Lemma}
\theoremstyle{definition}
\newtheorem{definition}[theorem]{Definition}
\newtheorem{example}[theorem]{Example}
\newtheorem{conjecture}[theorem]{Conjecture}
\newtheorem{remark}[theorem]{Remark}
\numberwithin{equation}{section}
\definecolor{darkred}{rgb}{0.7,0,0} % darkred color
\newcommand{\defn}[1]{{\color{darkred}\emph{#1}}} % emphasis of a definition
\title[Unified theory for finite Markov chains]
  {Unified theory for finite Markov chains}
\author[J.~Rhodes]{John Rhodes}
\address[J. Rhodes]{Department of Mathematics, University of California, Berkeley, CA 94720, U.S.A.}
\email{rhodes@math.berkeley.edu, blvdbastille@gmail.com}
\author[A.~Schilling]{Anne Schilling}
\address[A. Schilling]{Department of Mathematics, UC Davis, One Shields Ave., Davis, CA 95616-8633, U.S.A.}
\email{anne@math.ucdavis.edu}
\date{\today}
\keywords{Markov chains, Tsetlin library, stationary distributions, semaphore codes, Kleene expressions, 
Karnofsky--Rhodes expansion, McCammond expansion}
\subjclass[2010]{Primary 20M30, 60J10; Secondary 20M05, 60B15, 60C05}
\begin{document}

\begin{abstract}
We provide a unified framework to compute the stationary distribution of any finite irreducible Markov chain or 
equivalently of any irreducible random walk on a finite semigroup $S$.
Our methods use geometric finite semigroup theory via the Karnofsky--Rhodes and the McCammond expansions 
of finite semigroups with specified generators; this does not involve any linear algebra.
The original Tsetlin library is obtained by applying the expansions to $P(n)$, the set 
of all subsets of an $n$ element set. Our set-up generalizes previous groundbreaking work involving left-regular bands (or 
$\mathscr{R}$-trivial bands) by Brown and Diaconis, extensions to $\mathscr{R}$-trivial semigroups by Ayyer, Steinberg, 
Thi\'ery and the second author, and important recent work by Chung and Graham. The Karnofsky--Rhodes expansion of 
the right Cayley graph of $S$ in terms of generators yields again a right Cayley graph. The McCammond expansion 
provides normal forms for elements in the expanded $S$. Using our previous results with Silva based on work by Berstel, 
Perrin, Reutenauer, we construct (infinite) semaphore codes on which we can define Markov chains. These semaphore 
codes can be lumped using geometric semigroup theory. Using normal forms and associated Kleene expressions, they 
yield formulas for the stationary distribution of the finite Markov chain of the expanded $S$ and the original $S$. 
Analyzing the normal forms also provides an estimate on the mixing time.
\end{abstract}

\maketitle

%%%%%%%%%%%%%%%%%%%%%%%%%%%%%%%%%%%%%%%%%%%%%%%%%%%%%%%%%%%%%%
\section{Introduction}
%%%%%%%%%%%%%%%%%%%%%%%%%%%%%%%%%%%%%%%%%%%%%%%%%%%%%%%%%%%%%%

The Tsetlin library~\cite{Tsetlin.1963} is a Markov chain, whose states are all permutations $S_n$ of $n$ books on a shelf.
Given an arrangement of books $\sigma \in S_n$, construct $\sigma' \in S_n$ from $\sigma$ by removing book $a$ from 
the shelf and inserting it to the front. To each such transition $\sigma \stackrel{a}{\longrightarrow} \sigma'$, we associate
a probability $x_a$. If the probability $x_a$ is large, it means that book $a$ is popular, whereas if $x_a$ is small, then
book $a$ is unpopular. Running this Markov chain for a while has the effect of accumulating the popular books in
the front. The stationary distribution is the limiting distributing of the books, when one lets the Markov chain run
for a long time. The precise formula was derived by Hendricks~\cite{Hendricks.1972, Hendricks.1973}.

In the meantime, many generalizations of the Tsetlin library have been studied, such as
walks on hyperplane arrangements~\cite{Bidigare.1997,BHR.1999}, Brown's significant generalization to left regular 
bands~\cite{Brown.2000} based on important work by Brown and Diaconis~\cite{BD.1998}, hierarchies of 
libraries~\cite{Bjorner.2008,Bjorner.2009}, edge flipping in graphs~\cite{ChungGraham.2012}, random walks on linear 
extensions of a poset~\cite{AKS.2014}, random walks on general $\mathscr{R}$-trivial semigroups~\cite{ASST.2015}, and
others~\cite{AS.2010, Ayyer.2011,AS.2013,ASST.2015a,PS.2017}.
The main technique, that made the analysis of all of these random walks possible, is the concept of reduced
words of the elements in the underlying semigroup. As pointed out in~\cite[Section 4.4]{ASST.2015} 
and~\cite[Remark 3.1]{MSS.2015} in the context of $\mathscr{R}$-trivial semigroups, this is the Karnofsky--Rhodes 
expansion of the support semilattice introduced in the seminal paper by Brown~\cite{Brown.2000}. This is an example, 
where concepts from semigroup theory were rediscovered in the setting of probability.

As is often the case in mathematics, once there is a toehold, an avalanche of results can follow by applying the results 
of the new field. The theory that we develop in this paper makes it possible
to compute the stationary distribution for any irreducible finite Markov chain. It uses the power of geometric finite semigroup 
theory~\cite{MRS.2011} via the Karnofsky--Rhodes and the McCammond expansions of finite
semigroups and does not use any linear algebra. From the theory of regular languages (that is, finite semigroup 
definable languages), we can define Markov chains on the expanded semigroup using semaphore 
codes~\cite{BPR.2010,RSS.2016}. The Karnofsky--Rhodes and McCammond expansions ensure the existence of 
normal forms for elements (or paths) in the semaphore code. Using Kleene expressions, Zimin words and elementary 
combinatorics, we are able to derive the stationary distribution of all irreducible random walks associated to a finite semigroup.
Since there are only a finite number of states and a finite number of maps between the states, given a probability
distribution there is a decidable algorithm giving the stationary distribution of the Markov chain, which is new.

This generalizes known stationary distributions of random walks and provides an abundance of
new interesting examples of random walks and their stationary distributions. We obtain new examples
by the $\mathsf{bar}$ construction from finite semigroup theory~\cite{LRS.2017} and the technology of the solution
of the Burnside problem by McCammond and others~\cite{McCammond.1991}. In addition, we provide a standard 
interpretation of these constructions to understand how they apply to the real world.

The random walks that we deal with are in general not diagonalizable, unlike in the case of left regular 
bands~\cite{Brown.2000}. Our approach differs in that we start with an irreducible, infinitely countable
random walk, namely the random walk on semaphore codes with the Bernoulli distribution as its stationary
distribution. Using advanced finite semigroup theory, we find projections of these walks via lumping first in the case
when the minimal ideal is left zero. The lumping is allowed thanks to the fact that the Karnofksy--Rhodes expansion 
of a right Cayley graph is itself a right Cayley graph of a finite semigroup. We obtain the general case from the case 
when the minimal ideal is left zero as a limiting case by applying the flat operator~\cite{LRS.2017} and then limiting the 
probability of the new introduced generator to zero. The resulting random walks on finite semigroups are in general not 
diagonalizable, but we can nonetheless compute the stationary distributions. Using the hitting time of semaphore 
codes~\cite{RSS.2016} and Kleene expressions, we can also estimate the mixing time of these walks
via the techniques in~\cite[Lemma 3.6]{ASST.2015}.

\medskip

The paper is organized as follows. Section~\ref{section.walks} is devoted to discrete Markov chains and
their analysis using semigroup theory. We begin with a review of Cayley graphs of finite semigroups 
(Section~\ref{section.right cayley}), Markov chains in the language of semigroups (Section~\ref{section.markov}), and
 random walks on semaphore codes (Section~\ref{section.semaphore}). Ideals in semigroups are intimately related to 
 (possibly infinite) semaphore codes.
We proceed to explain the Karnofksy--Rhodes (Section~\ref{section.KR}) and McCammond expansion 
(Section~\ref{section.mccammond}) of the right Cayley graph (Section~\ref{section.right cayley}) of a finite semigroup $S$ 
with generators $A$. The McCammond expansion guarantees normal forms of all elements in terms of the generators
(Section~\ref{section.normal}). It is possible to lump the random walk on semaphore codes by reducing to 
simple paths without loops (Section~\ref{section.lumping}) in the case when the minimal ideal in the semigroup is
left zero. Using Kleene expressions and Zimin words, we provide explicit expressions for the stationary 
distributions (Section~\ref{section.kleene}). Adding a zero to the semigroup, it is possible to obtain the stationary
distribution for any finite semigroup from the case when the minimal ideal is left zero as a limiting case
(Section~\ref{section.bar and flat}). In Section~\ref{section.mixing time} we provide bounds on the mixing time.
In Section~\ref{section.examples}, we discuss many examples of semigroups and 
how our methods yield the stationary distributions of known and new Markov chains, such as the original Tsetlin library 
(Section~\ref{section.Tsetlin library}), edge flipping on a line (Section~\ref{section.edge flipping}),
cyclic walks using the Rees matrix semigroup (Sections~\ref{section.Bn} 
and~\ref{section.BnZp}), and random walks on general $\mathscr{R}$-trivial semigroups (Section~\ref{section.R trivial}). 
The $\mathsf{bar}$ and flat $\flat$ operations, introduced in Section~\ref{section.bar and flat}, are then used in 
Sections~\ref{section.bar} and~\ref{section.flat} to produce many infinite families of examples of Markov chains.
We conclude in Section~\ref{section.burnside} with examples in the Burnside class.

%%%%%%%%%%%%%%%%%%%%%%%%%%%%%%%%%%%%%%%%%%%%%%%%%%%%%%%%%%%%%%
\subsection*{Acknowledgments}
We are grateful to Arvind Ayyer, Persi Diaconis, Stuart Margolis, Christoph Reutenauer, Ben Steinberg, and Nicolas Thi\'ery 
for helpful discussions and comments. In particular, we thank Persi Diaconis for suggesting the example in
Section~\ref{section.edge flipping} and other improvements to this paper.
The first author thanks the Simons Foundation Collaboration Grants for Mathematicians for travel grant \#313548.
The second author was partially supported by NSF grant  DMS--1500050.

%%%%%%%%%%%%%%%%%%%%%%%%%%%%%%%%%%%%%%%%%%%%%%%%%%%%%%%%%%%%%%
\section{Random walks on semigroups}
\label{section.walks}
%%%%%%%%%%%%%%%%%%%%%%%%%%%%%%%%%%%%%%%%%%%%%%%%%%%%%%%%%%%%%%

Let $S$ be a finite semigroup. We are interested in considering $S$ together with a choice of generators $A$,
denoted by $(S,A)$. Every finite semigroup has a finite set of generators (for example, the elements of $S$ itself, but 
possibly fewer). We will construct Markov chains for $(S,A)$ using this set-up by associating a probability $x_a$ to each 
generator $a\in A$.

%%%%%%%%%%%%%%%%%%%%%%%%%%%%%%%%%%%%%%%%%%%%%%%%%%%%%%%%
\subsection{Cayley graphs}
\label{section.right cayley}

Given a \defn{finite semigroup} $S$ and a set of \defn{generators} $A$, we can view $A$ as a finite, non-empty alphabet. 
Denote by $A^+$ the set of all words $a_1 \ldots a_\ell$ of length $\ell \geqslant 1$ over $A$ with multiplication given
by concatenation. Thus $(A^+,A)$ is the free semigroup with generators $A$. Furthermore, let $A^\star = A^+ \cup \{1\}$, 
so that $A^\star$ is $A^+$ with the identity added; it is the free monoid generated by $A$. The elements of $A^\star$ are 
typically called words. A subset of $A^\star$ is called a \defn{language}.

A semigroup $S$ with multiplication $\cdot$ generated by a subset $A\subseteq S$ determines a surmorphism
\begin{equation}
\label{equation.phi}
	\varphi \colon (A^+,A) \to (S,A)
\end{equation}
mapping $a_1\ldots a_\ell \in A^+$ to $a_1 \cdot a_2 \cdot \ldots \cdot a_\ell \in S$. Given a word $w\in A^+$, we
denote $[w]_S := \varphi(w)$ to avoid the reference to $\varphi$. The pair $(S,A)$ is also sometimes called
an $A$-semigroup (see~\cite[Definition 2.15]{MRS.2011}).

\begin{definition}[Graph]
A \defn{labeled directed graph} $\Gamma$ (or \defn{graph} for short) consists of a vertex set $V(\Gamma)$, an edge 
set $E(\Gamma)$, and a labelling set $A$. An edge $e\in E(\Gamma)$ is a tuple $e=(v,a,w) \in V(\Gamma)\times A \times
V(\Gamma)$. We often also write $e\colon v \stackrel{a}{\longrightarrow} w$.
\end{definition} 

A \defn{path} $p$ from vertex $v$ to vertex $w$ in a graph $\Gamma$ is a sequence of edges
\[
	p = \left(v = v_0 \stackrel{a_1}{\longrightarrow} v_1 \stackrel{a_2}{\longrightarrow} \cdots 
	\stackrel{a_\ell}{\longrightarrow} v_\ell =w\right),
\]
where each tuple $(v_i,a_{i+1},v_{i+1}) \in E(\Gamma)$ for $0\leqslant i<\ell$. The initial (resp. terminal) vertex $v$ 
(resp. $w$) of $p$ is denoted by $\iota(p)$ (resp. $\tau(p)$). The \defn{length} of $p$ is $\ell(p):= \ell$
and $a_1 \ldots a_\ell$ is called the label of the path. If $p$ and $q$ are paths, $\ell(q)=k \leqslant \ell(p)$, and the first 
$k+1$ vertices and $k$ edges of $p$ and $q$ agree, we say that $q$ is an \defn{initial segment} of $p$, written 
$q \subseteq p$.

We can define a preorder $\prec$ on $V(\Gamma)$ by $v \prec w$ if there is a path from $v$ to $w$ in $\Gamma$.
This induces an equivalence relation $\sim$ on $V(\Gamma)$, where $v \sim w$ if $v \prec w$ and $w \prec v$.
A \defn{strongly connected component} of $\Gamma$ is a $\sim$-equivalence class.

\begin{definition}[Rooted graph]
A \defn{rooted graph} is a pair $(\Gamma,r)$, where $\Gamma$ is a graph and $r \in V(\Gamma)$, such that
$r \prec v$ for all $v \in V(\Gamma)$.
\end{definition}

A path is called \defn{simple} if it visits no vertex twice. Empty (or trivial) paths are considered simple.
For a rooted graph $(\Gamma,r)$, let $\mathsf{Simple}(\Gamma,r)$ be the set of simple paths of $\Gamma$
starting at $r$ (including the empty path).

We are now ready to apply this set-up to semigroups. If $S$ is a semigroup, then $S^{\mathbbm{1}}$ denotes $S$ with 
an adjoined identity $\mathbbm{1}$ even if $S$ already has an identity.

\begin{definition}[Right and left Cayley graph]
Let $(S,A)$ be a finite semigroup $S$ together with a set of generators $A$. 
The \defn{right Cayley graph} $\mathsf{RCay}(S,A)$ of $S$ with respect to $A$ is the rooted graph with
vertex set $V(\mathsf{RCay}(S,A)) = S^{\mathbbm{1}}$, root $r=\mathbbm{1} \in S^{\mathbbm{1}}$, and edges 
$s \stackrel{a}{\longrightarrow} s'$ for all $(s,a,s') \in S^{\mathbbm{1}} \times A \times S^{\mathbbm{1}}$, where $s'=sa$ 
in $S^{\mathbbm{1}}$.  The \defn{left Cayley graph} $\mathsf{LCay}(S,A)$ is defined in the analogous fashion
with the only difference that $s'=as$.
\end{definition}

\begin{remark}
Since $(s,a) \in S^{\mathbbm{1}} \times A$ uniquely determines the edge $s  \stackrel{a}{\longrightarrow} sa$ in 
$\mathsf{RCay}(S,A)$, we sometimes also index the edge set as $E(\mathsf{RCay}(S,A)) = S^{\mathbbm{1}} \times A$ for 
right Cayley graphs and analogously for left Cayley graphs.
\end{remark}

An example of a right Cayley graph is given in Figure~\ref{figure.right cayley}.

\begin{figure}[t]
\begin{center}
\begin{tikzpicture}[auto]
\node (A) at (0, 0) {$\mathbbm{1}$};
\node (B) at (-2,-1) {$(1,-1)$};
\node(C) at (2,-1) {$(-1,1)$};
\node(D) at (0,-2) {$(-1,-1)$};
\node(E) at (0,-3) {$(1,1)$};
\draw[edge,blue,thick] (A) -- (B) node[midway, above] {$a$};
\draw[edge,blue,thick] (A) -- (C) node[midway, above] {$b$};
\draw[edge,thick] (B) -- (D) node[midway, above] {$b$};
\draw[edge,thick] (D) -- (B);
\draw[edge,thick] (C) -- (D) node[midway, above] {$a$};
\draw[edge,thick] (D) -- (C);
\draw[edge,thick] (B) -- (E) node[midway, below] {$a$};
\draw[edge,thick] (E) -- (B);
\draw[edge,thick] (C) -- (E) node[midway, below] {$b$};
\draw[edge,thick] (E) -- (C);
\end{tikzpicture}
\end{center}
\caption{\label{figure.right cayley}The right Cayley graph $\mathsf{RCay}(Z_2 \times Z_2,\{a,b\})$ of the Klein $4$-group
and generators $a=(1,-1)$ and $b=(-1,1)$. Transition edges are indicated in blue. Double edges mean that right multiplication
by the label for either vertex yields the other vertex.}
\end{figure}

For a semigroup $S$, two elements $s,s'\in S$ are in the same $\mathscr{R}$-class if the corresponding right ideals 
are equal, that is, $s S^{\mathbbm{1}} = s'S^{\mathbbm{1}}$. The strongly connected components of $\mathsf{RCay}(S,A)$ 
are precisely the $\mathscr{R}$-classes of $S^{\mathbbm{1}}$. In other words, the vertices of a strongly connected 
component are exactly the vertices that represent the elements in an $\mathscr{R}$-class of $S^{\mathbbm{1}}$. 
Edges that go between distinct strongly connected components will turn out to play an important role in the 
Karnofksy--Rhodes expansion that we will need later.

\begin{definition}[Transition edges]
Let $\Gamma$ be a graph. Then $e=(v,a,w) \in E(\Gamma)$ with $v,w\in V(\Gamma)$ and $a\in A$ is 
a \defn{transition edge} if $v \not \sim w$. In other words, there is no path from $w$ to $v$ in $\Gamma$.
\end{definition}

In Figure~\ref{figure.right cayley}, the transition edges are indicated in blue. Note that the edges leaving $\mathbbm{1}$
in the right Cayley graph are always transitional. Other edges might or might not be transitional.

%%%%%%%%%%%%%%%%%%%%%%%%%%%%%%%%%%%%%%%%%%%%%%%%%%%%%%%%
\subsection{Markov chains}
\label{section.markov}

A \defn{Markov chain} $\mathcal{M}$ consists of a finite or countable state space $\Omega$ together with transition
probabilities $\mathcal{T}_{s',s}$ for the transition $s \longrightarrow s'$ for $s,s' \in \Omega$.
The matrix $\mathcal{T} = (\mathcal{T}_{s',s})_{s,s'\in \Omega}$ is called the \defn{transition matrix}.
In our convention, the column sums of $\mathcal{T}$ are equal to one, or equivalently, that $\mathcal{T}$ is a 
column-stochastic matrix.

A Markov chain is called \defn{irreducible} if for any $s,s' \in \Omega$ there exists an integer $m$ (possibly depending
on $s$, $s'$) such that $\mathcal{T}_{s',s}^m >0$. In other words, one can get from any state $s$ to any other state $s'$ 
using only steps with positive probability. A state $s\in \Omega$ is called \defn{recurrent} if the system returns
to $s$ in finitely many steps with probability one.

The \defn{stationary distribution} of $\mathcal{M}$ is a vector $\Psi = (\Psi_s)_{s\in \Omega}$ such that 
$\mathcal{T} \Psi = \Psi$ and $\sum_{s \in \Omega} \Psi_s=1$. In other words, $\Psi$ is a right-eigenvector of 
$\mathcal{T}$ with eigenvalue one. If the Markov chain is irreducible, the stationary distribution is unique~\cite{LPW.2009}.

Let us now partition the state space into $(\Omega_1,\ldots,\Omega_\ell)$ such that $\Omega_i \cap
\Omega_j = \emptyset$ for $i\neq j$ and $\bigcup_{i=1}^\ell \Omega_i = \Omega$. One may view
such a partition also as an equivalence relation $s\sim s'$ if $s,s'\in \Omega_i$ for some $1\leqslant i \leqslant \ell$.
We say that $\mathcal{M}$ can be \defn{lumped} with respect to the partition $(\Omega_1,\ldots,
\Omega_\ell)$ if the transition matrix $\mathcal{T}$ satisfies~\cite[Lemma~2.5]{LPW.2009} \cite{KS.1976}
for all $1\leqslant i,j \leqslant \ell$
\begin{equation}
\label{equation.lumping}
	\sum_{t\in \Omega_j} \mathcal{T}_{t,s} = \sum_{t\in \Omega_j} \mathcal{T}_{t,s'}
	\qquad \text{for all $s,s' \in \Omega_i.$}
\end{equation}
The lumped Markov chain is a random walk on the equivalence classes, whose stationary distribution labeled by
$w$ is $\sum_{s \sim w} \Psi_s$.

As explained in~\cite[Proposition 1.5]{LPW.2009} and~\cite[Theorem 2.3]{ASST.2015}, every finite state Markov chain 
$\mathcal{M}$ has a random letter representation, that is, a representation of a semigroup $S$ acting on the left on the 
state space $\Omega$. In this setting, we transition $s \stackrel{a}{\longrightarrow} s'$ with probability 
$0\leqslant x_a\leqslant 1$, where $s, s'\in \Omega$, $a\in S$ and $s'=a.s$ is the action of $a$ on the state $s$. 
Let $A=\{a\in S \mid x_a>0\}$. We assume that $A$ generates $S$; if not, it suffices to consider the subsemigroup
generated by $A$. Note that $\sum_{a\in A} x_a =1$. The \defn{transition matrix} $\mathcal{T}$ of $\mathcal{M}$ is the 
$|\Omega| \times |\Omega|$-matrix 
\begin{equation}
\label{equation.transition matrix}
	\mathcal{T}_{s',s} = \sum_{\substack{a \in A\\ s \stackrel{a}{\longrightarrow} s'}} x_a
	\qquad \text{for $s,s' \in \Omega$.}
\end{equation}
Note that we may assume that the action of $S$ on $\Omega$ is faithful as this does not affect the random
walk.

\begin{definition}[Ideal]
Let $S$ be a semigroup. A two-sided \defn{ideal} $I$ (or ideal for short) is a subset $I \subseteq S$ such that 
$u I v \subseteq I$ for all $u,v \in S^{\mathbbm{1}}$. Similarly, a \defn{left ideal} $I$ is a subset 
$I \subseteq S^{\mathbbm{1}}$ such that $u I \subseteq I$ for all $u\in S^{\mathbbm{1}}$. 
\end{definition}

If $I,J$ are ideals of $S$, then $IJ \subseteq I \cap J$, so that $I \cap J \neq \emptyset$. Hence every
finite semigroup has a unique minimal ideal denoted $K(S)$.
As shown in~\cite{Clifford.Preston.1961,KRT.1968}, the minimal ideal $K(S)$ of a finite semigroup $S$ is the disjoint 
union of all the minimal left ideals of $S$ and the Rees Theorem applies. By~\cite[Remark 2.8]{ASST.2015} 
the faithful left action of $S$ generated by $A$ on $\Omega$ is isomorphic to the left action of $S$ on $K(S)$.

For a finite $A$-semigroup $(S,A)$, let $\mathcal{M}(S,A)$ be the Markov chain, where the transition
$s \stackrel{a}{\longrightarrow} s'$ for $s,s'\in S$ and $a\in A$ is given by $s'=as$ in the left Cayley graph
with probability $0<x_a\leqslant 1$. Note that we are assuming that all probabilities $x_a$ for $a\in A$
are nonzero. Then it was shown in~\cite{HM.2011} (see also~\cite[Proposition 3.2]{ASST.2015}) that the
recurrent states of $\mathcal{M}(S,A)$ are the elements in $K(S)$. Furthermore, the connected components
of the recurrent states in the random walk are the minimal left ideals of $S$. The restriction of the random walk 
to any minimal left ideal is irreducible. Moreover, the chain so obtained is independent of the chosen minimal left ideal.
This random walk and the random walk with states a left ideal $L$ of $K(S)$ and $S$ acting on the left made faithful, 
that is $x \stackrel{a}{\longrightarrow} y$ for $x \in L$ and $y = ax$, are essentially the same. So we may not distinguish 
the two cases. 

In the following, we first treat the case when $K(S)$ is left zero (that is, $xy=x$ for all $x,y\in K(S)$) using semaphore 
codes, the Karnofsky--Rhodes and McCammond expansion of the right Cayley graph of $(S,A)$, and Kleene expressions.
In Corollary~\ref{corollary.stationary general}, we add a zero to the semigroup and generators to deduce
the case for general $K(S)$ from the case when $K(S)$ is left zero.

%%%%%%%%%%%%%%%%%%%%%%%%%%%%%%%%%%%%%%%%%%%%%%%%%%%%%%%%
\subsection{Semaphore codes}
\label{section.semaphore}

Ideals in a semigroup are related to semaphore codes~\cite{BPR.2010,RSS.2016}. They also give rise to Markov 
chains since they allow for a left action, as we will now explain. As before, let $A$ be a finite, non-empty alphabet. The 
semigroup $A^+$ has three orders: ``is a suffix", ``is a prefix", and ``is a factor". In particular, for $u,v \in A^+$
\begin{equation*}
\begin{split}
	& \text{$u$ is a suffix of $v$} \quad \Longleftrightarrow \quad
	\text{$\exists w \in A^\star$ such that $w u = v$,}\\
	& \text{$u$ is a prefix of $v$} \quad \Longleftrightarrow \quad
	\text{$\exists w \in A^\star$ such that $u w = v$,}\\
	& \text{$u$ is a factor of $v$} \quad \Longleftrightarrow \quad
	\text{$\exists w_1, w_2 \in A^\star$ such that $w_1 u w_2 = v$.}
\end{split}
\end{equation*}
A \defn{prefix code} $\mathcal{C}$ of $A^+$ (or over $A$) is a subset $\mathcal{C} \subseteq A^+$ so that all elements 
in $\mathcal{C}$ are pairwise incomparable in the prefix order~\cite{BPR.2010}.

\begin{definition} \cite[Proposition~3.5.4]{BPR.2010}
A prefix code $\mathcal{S} \subset A^+$ is a \defn{semaphore code} if $A \mathcal{S} \subseteq \mathcal{S} A^\star$.
\end{definition}

In other words, a semaphore code is a prefix code $\mathcal{S}$ over $A$ for which there is a left action in
the following sense:
\begin{equation}
\label{equation.semaphore}
\begin{split}
&\text{If $u \in \mathcal{S} \subseteq A^+$ and $a \in A$, then $a u$ has a prefix in $\mathcal{S}$ 
(and hence a unique prefix of $a u$).} \\
&\text{The left action $a.u$ is the prefix of $a u$ that is in $\mathcal{S}$.}
\end{split}
\end{equation}

Semaphore codes over $A$ are inherently related to ideals of $A^+$~\cite[Proposition 4.3]{RSS.2016}. 
Given an ideal $I \subseteq A^+$ we construct a semaphore code as follows. Given $u = a_1 a_2  \ldots a_j \in A^+$, check 
whether $u$ is in $I$. If $u \not \in I$, ignore $u$. If $u \in I$, we find the (necessarily unique) index $1\leqslant i \leqslant j$ 
such that $a_1 \ldots a_{i-1} \not \in I$, but $a_1 \ldots a_i \in I$. Then $a_1 \ldots a_i$ is a code word and the set of all 
such words forms the semaphore code $\mathcal{S}=:I \beta_\ell$. Conversely, given a semaphore code 
$\mathcal{S}$, the corresponding ideal is obtained as $\mathcal{S} A^\star$. This yields a bijection
\begin{equation}
\label{equation.I semaphore}
	I \longleftrightarrow I \beta_\ell
\end{equation}
between ideals $I \subseteq A^+$ and semaphore codes $\mathcal{S}$ over $A$.

Using the left action in~\eqref{equation.semaphore}, we can define a Markov chain $\mathcal{M}^{\mathcal{S}}$
on the semaphore code $\mathcal{S}$. We transition $s \stackrel{a}{\longrightarrow} s'$ for $s,s' \in \mathcal{S}$
and $a\in A$ with probability $x_a$, where $s'=a.s$. The stationary distribution for the Markov chain 
$\mathcal{M}^{\mathcal{S}}$ was computed in~\cite[Theorem 8.1]{RSS.2016}.

\begin{proposition} \cite{RSS.2016}
\label{proposition.stationary semaphore}
The stationary distribution of the Markov chain $\mathcal{M}^{\mathcal{S}}$ is
\[
	\Psi^{\mathcal{S}}_s = \prod_{a \in s} x_a \qquad \text{for all $s \in \mathcal{S}$.}
\]
\end{proposition}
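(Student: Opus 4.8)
The plan is to verify that the vector $\Psi^{\mathcal S}$ with entries $\Psi^{\mathcal S}_s = \prod_{a\in s} x_a$ is non-negative, has entries summing to $1$, and is fixed by the transition matrix $\mathcal T$ of $\mathcal M^{\mathcal S}$; since $\mathcal M^{\mathcal S}$ is irreducible this identifies it as the stationary distribution. Throughout, write $p(w) := \prod_{a\in w} x_a$ for the weight of a word $w\in A^\star$, with $p(\varepsilon) = 1$; since $\sum_{a\in A} x_a = 1$, this weight satisfies the conservation law $\sum_{a\in A} p(wa) = p(w)$, and $\Psi^{\mathcal S}_s = p(s)$.

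The first ingredient is a completeness lemma: \emph{if $u\in A^\star$ has no prefix lying in $\mathcal S$, then $\sum_{s\in\mathcal S,\, u\text{ a prefix of }s} p(s) = p(u)$.} Taking $u = \varepsilon$ gives $\sum_{s\in\mathcal S} p(s) = 1$, so $\Psi^{\mathcal S}$ is a probability vector. To prove the lemma I would fix $n\geq|u|$ and sort the words $w$ of length $n$ having $u$ as a prefix (total weight $p(u)$) according to whether $w$ has a prefix of the form $uw'$ with $uw'\in\mathcal S$ — such a $w'$ being unique since $\mathcal S$ is a prefix code — or not. Applying $p$ and the conservation law rewrites this partition as $p(u) = \sum\{\, p(s) : s\in\mathcal S,\ u\text{ a prefix of }s,\ |s|\leq n \,\} + p(R_n)$, where $p(R_n)$ is the total weight of the ``unabsorbed'' length-$n$ extensions of $u$, that is, those not lying in the ideal $\mathcal S A^\star$. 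Fixing a word $u_0\in\mathcal S$, every word containing $u_0$ as a factor lies in $\mathcal S A^\star$; since every $x_a>0$, a length-$n$ extension $uv$ of $u$ contains $u_0$ as a factor with probability tending to $1$ (cut $v$ into disjoint blocks of length $|u_0|$), so $p(R_n)\to 0$ and the lemma follows. (This is the probabilistic Kraft equality for maximal prefix codes.)

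Next I would unwind the fixed-point equation $(\mathcal T\Psi^{\mathcal S})_{s'} = \Psi^{\mathcal S}_{s'}$ for $s' = a_1 a_2\cdots a_\ell\in\mathcal S$. The key reduction is that, for $a\in A$ and $s\in\mathcal S$, $a.s = s'$ holds if and only if $s'$ is a prefix of the word $as$: indeed $as$ has a unique prefix in $\mathcal S$ by the semaphore property, and if the code word $s'$ is a prefix of $as$ then it must be that one. Since $as$ begins with $a$, ``$s'$ a prefix of $as$'' is equivalent to ``$a = a_1$ and $u := a_2\cdots a_\ell$ is a prefix of $s$'' (the last condition being vacuous when $\ell = 1$, where $u = \varepsilon$), so
\[
   (\mathcal T\Psi^{\mathcal S})_{s'} \;=\; x_{a_1}\sum_{\substack{s\in\mathcal S\\ u\text{ is a prefix of }s}} p(s),
   \qquad u = a_2 a_3\cdots a_\ell .
\]
Provided $u$ has no proper prefix in $\mathcal S$, this sum equals $p(u) = x_{a_2}\cdots x_{a_\ell}$: by the completeness lemma if $u\notin\mathcal S$, and trivially — only $s = u$ contributes, by the prefix-code property — if $u\in\mathcal S$; the case $\ell = 1$, $u = \varepsilon$ is just $\sum_{s\in\mathcal S} p(s) = 1$. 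Hence $(\mathcal T\Psi^{\mathcal S})_{s'} = x_{a_1}x_{a_2}\cdots x_{a_\ell} = p(s') = \Psi^{\mathcal S}_{s'}$.

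The only non-formal step — and the main obstacle — is proving that $u = a_2\cdots a_\ell$ has no proper prefix in $\mathcal S$; this is exactly where the \emph{two-sided} ideal property of $\mathcal S A^\star$ is needed, rather than merely the prefix-code/semaphore axioms. Suppose a proper prefix $a_2 a_3\cdots a_j$ of $u$ (so $j\leq\ell-1$) lies in $\mathcal S$. Then the word $a_1 a_2\cdots a_j$ contains $a_2\cdots a_j\in\mathcal S$ as a factor, hence lies in the ideal $\mathcal S A^\star$, hence has some prefix $a_1 a_2\cdots a_i$ with $i\leq j$ belonging to $\mathcal S$. But $a_1 a_2\cdots a_i$ is then a proper prefix of $s' = a_1 a_2\cdots a_\ell \in \mathcal S$, contradicting that $\mathcal S$ is a prefix code. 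This finishes the verification, so $\Psi^{\mathcal S}$ is the stationary distribution of $\mathcal M^{\mathcal S}$.
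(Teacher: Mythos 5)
The paper itself gives no proof of this proposition --- it is imported from \cite[Theorem~8.1]{RSS.2016} --- so there is no internal argument to compare against; judged on its own, your proof is correct and self-contained. The two places where a direct verification could genuinely break are exactly the ones you isolate: (i) the Kraft-type completeness identity $\sum\{p(s): s\in\mathcal S,\ u\ \text{a prefix of}\ s\}=p(u)$ for $u$ with no prefix in $\mathcal S$, which you establish by showing the weight of the unabsorbed length-$n$ extensions tends to $0$ (this uses that $\mathcal S A^\star$ is a two-sided ideal of $A^+$, which does follow from the semaphore axiom $A\mathcal S\subseteq\mathcal S A^\star$, and that every $x_a>0$, which the paper assumes); and (ii) the fact that $u=a_2\cdots a_\ell$ has no proper prefix in $\mathcal S$, without which the fixed-point computation would yield $0$ rather than $p(u)$ --- your contradiction via $a_1(a_2\cdots a_j)\in A\mathcal S\subseteq\mathcal S A^\star$ against the prefix-code property of $s'$ is right. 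The cited source reaches the same formula by a more structural route, realizing the chain as a projection of the Bernoulli measure on infinite sequences (this is the point of view the present paper adopts in its introduction), whereas yours is an elementary fixed-point check; the price of the elementary route is that you must invoke irreducibility of $\mathcal M^{\mathcal S}$ to conclude uniqueness, which you assert without proof but which is easily checked (from any $s$ one reaches $s'=a_1\cdots a_\ell$ by applying $a_\ell,\dots,a_1$ in turn) and is taken for granted in the paper.
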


Given a finite semigroup $S$ with generators $A$, recall $\varphi \colon (A^+,A) \to (S,A)$ as defined 
in~\eqref{equation.phi}.

\begin{definition}
\label{definition.ideal}
Let $(S,A)$ be a finite semigroup $S$ with generators $A$ and $I \subseteq S$ an ideal. Define
\[
	\Ideal(S,A,I) := \varphi^{-1}(I) \subseteq A^+.
\]
In particular, let $\Ideal(S,A) := \Ideal(S,A,K(S))$, where recall that $K(S)$ is the minimal ideal in $S$.
\end{definition}

\begin{lemma}
$\Ideal(S,A,I)$ is an ideal in $A^+$.
\end{lemma}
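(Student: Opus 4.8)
The plan is to unwind the definitions and use the fact that $\varphi\colon (A^+,A)\to (S,A)$ is a semigroup homomorphism together with the fact that $I$ is an ideal of $S$. Concretely, $\Ideal(S,A,I)=\varphi^{-1}(I)$, so I must show that for any $w\in \varphi^{-1}(I)$ and any $u,v\in (A^+)^{\mathbbm 1}$ we have $uwv\in\varphi^{-1}(I)$, i.e. $[uwv]_S\in I$.

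First I would note that $\varphi^{-1}(I)$ is nonempty: since $A$ generates $S$ and $I\neq\emptyset$, pick $s\in I$, write $s=[w]_S$ for some $w\in A^+$, so $w\in\varphi^{-1}(I)$. Next, take $w\in\varphi^{-1}(I)$ and $u,v\in A^\star$ (allowing the empty word so as to cover left ideal, right ideal and two-sided closure simultaneously). Because $\varphi$ is a morphism, $[uwv]_S=[u]_S\cdot[w]_S\cdot[v]_S$, where $[u]_S,[v]_S\in S^{\mathbbm 1}$ (interpreting the empty word as the adjoined identity $\mathbbm 1$). Since $[w]_S\in I$ and $I$ is a two-sided ideal of $S$, by definition $S^{\mathbbm 1}\,[w]_S\,S^{\mathbbm 1}\subseteq I$, hence $[uwv]_S\in I$, so $uwv\in\varphi^{-1}(I)=\Ideal(S,A,I)$. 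This is exactly the ideal condition in $A^+$, so $\Ideal(S,A,I)$ is an ideal of $A^+$.

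There is essentially no obstacle here; the only mild subtlety is bookkeeping with the adjoined identities. I would be careful that $A^+$ has no identity, so the ideal condition for $\Ideal(S,A,I)$ should be stated as $u\,\Ideal(S,A,I)\,v\subseteq\Ideal(S,A,I)$ for all $u,v\in (A^+)^{\mathbbm 1}=A^\star$, matching the convention in the \emph{Ideal} definition where $u,v$ range over $S^{\mathbbm 1}$; under $\varphi$ (extended by $\varphi(\mathbbm 1)=\mathbbm 1$) the empty word maps to the adjoined identity $\mathbbm 1\in S^{\mathbbm 1}$, so the argument goes through uniformly whether or not $u$ or $v$ is empty. In particular, taking $I=K(S)$ gives that $\Ideal(S,A)$ is an ideal of $A^+$, which is what will be needed to build the associated semaphore code via the bijection~\eqref{equation.I semaphore}.
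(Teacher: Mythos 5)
Your argument is correct and is essentially the paper's own proof: both reduce the claim to $[uwv]_S=[u]_S[w]_S[v]_S\in I$ using that $\varphi$ is a morphism and $I$ is an ideal of $S$. The extra bookkeeping about empty words and nonemptiness is harmless but not needed beyond what the paper records.
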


\begin{proof}
Let $u,v \in A^+$ and $w \in \Ideal(S,A,I)$. Then $[uwv]_S = [u]_S [w]_S [v]_S \in I$ since $I \subseteq S$ is an ideal.
\end{proof}

\begin{definition}
\label{definition.S}
Let $\mathcal{S}(S,A)$ be the semaphore code associated to $\Ideal(S,A)$ according to~\eqref{equation.I semaphore}.
\end{definition}

We may now define the Markov chain $\mathcal{M}^{\mathcal{S}}(S,A)$ for the finite $A$-semigroup $(S,A)$
as the Markov chain on the semaphore code $\mathcal{S}(S,A)$. A semigroup $S$ is \defn{left zero} 
if $xy=x$ for all $x,y\in S$. When $K(S)$ is left zero, we can obtain the stationary distribution of
$\mathcal{M}(S,A)$ from the stationary distribution $\mathcal{M}^{\mathcal{S}}(S,A)$ by lumping.

\begin{theorem}
\label{theorem.semaphore lumping}
If $K(S)$ is left zero, then the finite Markov chain $\mathcal{M}(S,A)$ is a lumping of $\mathcal{M}^{\mathcal{S}}(S,A)$,
where $\mathcal{S}$ is the semaphore code associated to $K(S)$.
Furthermore,
\[
	\Psi_w = \sum_{\substack{s \in \mathcal{S}\\ [s]_S = w}} \Psi^{\mathcal{S}}_s \qquad \text{for $w \in K(S)$.}
\]
\end{theorem}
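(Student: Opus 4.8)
The plan is to verify the lumping condition \eqref{equation.lumping} directly for the partition of the semaphore code $\mathcal{S} = \mathcal{S}(S,A)$ into the fibers $\mathcal{S}_w := \{s \in \mathcal{S} \mid [s]_S = w\}$ indexed by $w \in K(S)$, and then to identify the resulting lumped chain with $\mathcal{M}(S,A)$ on $K(S)$. First I would record that $\varphi$ respects the relevant structure: if $s \in \mathcal{S}$ and $a \in A$, then the semaphore left action $a.s$ is the unique prefix of $as$ lying in $\mathcal{S}$, and since $\Ideal(S,A) = \varphi^{-1}(K(S))$, applying $\varphi$ gives $[a.s]_S = a \cdot [p]_S$ for the appropriate initial segment — but more to the point, because $s \in \mathcal{S} \subseteq \Ideal(S,A)$ already forces $[s]_S \in K(S)$, and $as$ has the prefix $a.s$ inside the ideal, one checks $[a.s]_S = a[s]_S$ in $K(S)$; here I would use that $K(S)$ is an ideal so $a[s]_S \in K(S)$, and that the first moment a word enters $\varphi^{-1}(K(S))$ its image is determined. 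This is the key compatibility: the semaphore action on $\mathcal{S}$ covers the left multiplication action of $A$ on $K(S)$ via the map $s \mapsto [s]_S$.

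Next I would check the lumping condition. Fix $w \in K(S)$ and pick any $s, s' \in \mathcal{S}_w$, i.e. $[s]_S = [s']_S = w$. For a fixed target class $\mathcal{S}_{w'}$, the quantity $\sum_{t \in \mathcal{S}_{w'}} \mathcal{T}^{\mathcal{S}}_{t,s} = \sum_{a \in A,\ a.s \in \mathcal{S}_{w'}} x_a = \sum_{a \in A,\ aw = w'} x_a$, by the compatibility of the previous paragraph (namely $[a.s]_S = a[s]_S = aw$, independent of which representative $s$ of the fiber we chose). The same computation with $s'$ gives $\sum_{a \in A,\ aw = w'} x_a$, so the two sums agree and \eqref{equation.lumping} holds. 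Therefore $\mathcal{M}^{\mathcal{S}}(S,A)$ lumps with respect to this partition, and the lumped transition probability from $w$ to $w'$ is exactly $\sum_{a : aw = w'} x_a$, which by \eqref{equation.transition matrix} is the transition matrix of $\mathcal{M}(S,A)$ restricted to $K(S)$ — and by the discussion preceding the theorem (the results of~\cite{HM.2011} and~\cite[Proposition 3.2]{ASST.2015}), the recurrent part of $\mathcal{M}(S,A)$, equivalently the faithful action on $K(S)$, is precisely this chain. The formula $\Psi_w = \sum_{s \in \mathcal{S},\ [s]_S = w} \Psi^{\mathcal{S}}_s$ is then the general principle quoted after \eqref{equation.lumping}, that the stationary distribution of a lumped chain is the sum of the stationary masses over each class.

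The main obstacle I expect is not the lumping identity itself but the well-definedness input behind it: showing that for $s \in \mathcal{S}$ and $a \in A$ one genuinely has $[a.s]_S = a[s]_S$, and in particular that $a.s$ — an initial segment of $as$ — does not "overshoot." The point is that $as \in \Ideal(S,A)$ (since $[as]_S = a[s]_S \in K(S)$ because $K(S)$ is an ideal), and $a.s$ is by construction the \emph{shortest} prefix of $as$ that already lies in $\Ideal(S,A)$; so $a.s$ is an initial segment of $as$ ending exactly when the image first lands in $K(S)$. One must argue that this image, at the moment of first entry, equals $a[s]_S$; this follows because once a product of generators lies in $K(S)$, multiplying on the right by the remaining letters of $s$ keeps us in $K(S)$ and, crucially, since we only need that $a.s$ and $as$ have the same image — i.e. the tail of $as$ after the prefix $a.s$ acts trivially on $K(S)$ from the right — I would need the hypothesis that $K(S)$ is \textbf{left zero}: then every element of $K(S)$ is a left zero, so right multiplication by anything in $S$ fixes it, giving $[as]_S = [a.s]_S \cdot (\text{tail}) = [a.s]_S$ as required. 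This is exactly where the left-zero assumption enters, and spelling out this identification carefully — together with citing Proposition~\ref{proposition.stationary semaphore} for $\Psi^{\mathcal{S}}_s = \prod_{a \in s} x_a$ to make the stationary-distribution formula explicit — is the substantive content; the rest is bookkeeping with \eqref{equation.lumping}.
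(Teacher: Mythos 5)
Your proposal is correct and follows essentially the same route as the paper: verify the lumping condition for the fibers of $s \mapsto [s]_S$, using that $a.s$ is the prefix of $as$ landing in the ideal and that the left-zero property of $K(S)$ makes the remaining tail act trivially on the right, so that $[a.s]_S = [as]_S = a[s]_S$ depends only on the fiber of $s$. Your identification of exactly where the left-zero hypothesis enters, and the final appeal to the standard fact that lumped stationary distributions are sums over equivalence classes, match the paper's argument.
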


\begin{proof}
We need to check that~\eqref{equation.lumping} holds, where $\mathcal{T}$ is the transition matrix of
$\mathcal{M}^{\mathcal{S}}$. In particular
\begin{equation}
\label{equation.lumping condition}
	\sum_{t, [t]_S=w} \mathcal{T}_{t,s} 
	= \sum_{t, [t]_S=w} \mathcal{T}_{t,s'}
\end{equation}
for all $w \in K(S)$ and $s,s' \in \mathcal{S}$ such that $[s]_S = [s']_S$. Recall from~\eqref{equation.transition matrix} 
that $\mathcal{T}_{t,s} = \sum_{a \in A, t=a.s} x_a$. Since $S$ is a semigroup, we have $[as]_S = [as']_S$ for all
$a\in A$. By the definition of a semaphore, $as$ (resp. $as'$) also has a prefix $a.s$ (resp. $a.s'$) in $K(S)$ under 
$\varphi$. Since $K(S)$ is left zero, this implies that $[a.s]_S = [as]_S$ and similarly $[a.s']_S = [as']_S$. Hence
$[a.s]_S = [a.s']_S$, which implies~\eqref{equation.lumping condition}. The stationary distribution of the lumped Markov 
chain is obtained from the stationary distribution of the unlumped Markov chain by summing over all states in an 
equivalence class. This proves the claim.
\end{proof}

%%%%%%%%%%%%%%%%%%%%%%%%%%%%%%%%%%%%%%%%%%%%%%%%%%%%%%%%
\subsection{The Karnofsky--Rhodes expansion}
\label{section.KR}

To compute explicit expressions for the stationary distributions of Markov chains on finite semigroups, we need the
\defn{Karnofsky--Rhodes expansion}~\cite{Elston.1999} of the right Cayley graph $\mathsf{RCay}(S,A)$.
See also~\cite[Definition~4.15]{MRS.2011} and~\cite[Section 3.4]{MSS.2015}. In addition, we will require
the McCammond expansion~\cite{MRS.2011}, which is discussed in the next section.

\begin{definition}[Karnofksy--Rhodes expansion]
The \defn{Karnofsky--Rhodes expansion} $\mathsf{KR}(S,A)$ is obtained as follows. Start with the right Cayley graph 
$\mathsf{RCay}(A^+,A)$. Identify two paths in $\mathsf{RCay}(A^+,A)$ 
\begin{equation*}
	p := \left( \mathbbm{1} \stackrel{a_1}{\longrightarrow} v_1 \stackrel{a_2}{\longrightarrow} \cdots \stackrel{a_\ell}
	{\longrightarrow} v_\ell \right)
	\quad \text{and} \quad 
	p' := \left( \mathbbm{1} \stackrel{a'_1}{\longrightarrow} v'_1 \stackrel{a'_2}{\longrightarrow} \cdots 
	\stackrel{a'_{\ell'}}{\longrightarrow} v'_{\ell'} \right)
\end{equation*}
in $\mathsf{KR}(S,A)$ if and only if the corresponding paths in $\mathsf{RCay}(S,A)$
\begin{equation*}
	[p]_S := \left( \mathbbm{1} \stackrel{a_1}{\longrightarrow} [w_1]_S \stackrel{a_2}{\longrightarrow} \cdots 
	\stackrel{a_\ell}{\longrightarrow} [w_\ell]_S \right)
	\quad \text{and} \quad 
	[p']_S := \left( \mathbbm{1} \stackrel{a'_1}{\longrightarrow} [w'_1]_S \stackrel{a'_2}{\longrightarrow} \cdots 
	\stackrel{a'_{\ell'}}{\longrightarrow} [w'_{\ell'}]_S \right),
\end{equation*}
where $w_i=a_1 a_2 \ldots a_i$ and $w_i' = a_1' a_2' \ldots a'_i$, end at the same vertex $[w_\ell]_S = [w'_{\ell'}]_S$ 
and in addition the set of transition edges of $[p]_S$ and $[p']_S$ in $\mathsf{RCay}(S,A)$ is equal. 
\end{definition}

An example for $\mathsf{KR}(S,A)$ is given in Figure~\ref{figure.KR}. In this figure, the paths $a^2b$ and $aba$
are equal because they end in the same vertex when projected onto $S$ and they share the same transition edge, 
which is the first $a$. On the other hand, the paths $ab$ and $ba$ are distinct because for the first path the transition
edge is the first $a$ and for the second path the transition edge is the first $b$.

\begin{figure}[t]
\begin{center}
\begin{tikzpicture}[auto]
\node (A) at (0, 0) {$\mathbbm{1}$};
\node (B) at (-1,-1) {$a$};
\node(C) at (1,-1) {$b$};
\node(D) at (-1,-2) {$ab$};
\node(E) at (1,-2) {$ba$};
\node(F) at (-2.5,-2) {$a^2$};
\node(G) at (2.5,-2) {$b^2$};
\node(H) at (-1,-3) {$a^2b=aba$};
\node(I) at (1,-3) {$bab=b^2a$};
\draw[edge,blue,thick] (A) -- (B) node[midway, above] {$a$\;};
\draw[edge,thick,blue] (A) -- (C) node[midway, above] {\;$b$};
\draw[edge,thick] (B) -- (F) node[midway,left] {$a$\;};
\draw[edge,thick] (F) -- (B);
\draw[edge,thick] (B) -- (D) node[midway,right] {$b$};
\draw[edge,thick] (D) -- (B);
\draw[edge,thick] (F) -- (H) node[midway,left] {$b$\;};
\draw[edge,thick] (H) -- (F);
\draw[edge,thick] (D) -- (H) node[midway,right] {\;$a$};
\draw[edge,thick] (H) -- (D);
\draw[edge,thick] (C) -- (E) node[midway,left] {$a$\;};
\draw[edge,thick] (E) -- (C);
\draw[edge,thick] (E) -- (I) node[midway,left] {$b$\;};
\draw[edge,thick] (I) -- (E);
\draw[edge,thick] (C) -- (G) node[midway,right] {\;$b$};
\draw[edge,thick] (G) -- (C);
\draw[edge,thick] (G) -- (I) node[midway,right] {\;$a$};
\draw[edge,thick] (I) -- (G);
\end{tikzpicture}
\end{center}
\caption{\label{figure.KR} The Karnofsky--Rhodes expansion $\mathsf{KR}(S,A)$ of the right Cayley graph of
Figure~\ref{figure.right cayley}.}
\end{figure}

\begin{example}
\label{example.01}
Take the semigroup $S=\{0,1\}$ under multiplication with generators $A=\{0,1\}$, where
$0\cdot x=x\cdot 0 = 0$ and $1\cdot x = x \cdot 1 = x$ for all $x \in S$.
The right Cayley graph and its Karnofsky--Rhodes expansion are given by
\begin{center}
\begin{tikzpicture}[auto]
\node (EX) at (0,0.6) {$\mathsf{RCay}(S,A)$};
\node (A) at (0, 0) {$\mathbbm{1}$};
\node (B) at (-1,-1) {$0$};
\node(C) at (1,-1) {$1$};
\node (D) at (1,-2) {};
\draw[edge,blue,thick] (A) -- (B) node[midway, above] {$0$};
\draw[edge,thick,blue] (A) -- (C) node[midway, above] {$1$};
\draw[edge,thick,blue] (C) -- (B) node[midway, below] {$0$};
\path
	(B) edge [loop left] node {$0,1$} (B)
	(C) edge [loop right] node {$1$} (C);
\end{tikzpicture}
\hspace{1cm}
\begin{tikzpicture}[auto]
\node (EX) at (0,0.6) {$\mathsf{KR}(S,A)$};
\node (A) at (0, 0) {$\mathbbm{1}$};
\node (B) at (-1,-1) {$0$};
\node(C) at (1,-1) {$1$};
\node(D) at (1,-2) {$0$};
\draw[edge,blue,thick] (A) -- (B) node[midway, above] {$0$};
\draw[edge,thick,blue] (A) -- (C) node[midway, above] {$1$};
\draw[edge,thick,blue] (C) -- (D) node[midway, right] {$0$};
\path
	(B) edge [loop left] node {$0,1$} (B)
	(C) edge [loop right] node {$1$} (C)
	(D) edge [loop right] node {$0,1$} (D);
\end{tikzpicture}
\end{center}
\end{example}

\begin{proposition}
\label{proposition.KR Cayley}
$\mathsf{KR}(S,A)$ is the right Cayley graph of a semigroup, also denoted by $\mathsf{KR}(S,A)$.
\end{proposition}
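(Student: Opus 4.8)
The plan is to exhibit $\mathsf{KR}(S,A)$ as the right Cayley graph of an explicit $A$-semigroup. The natural candidate is the quotient of $(A^+,A)$ by the congruence that the Karnofsky--Rhodes construction imposes on paths from the root. Concretely, for $u,v \in A^+$ declare $u \equiv v$ if and only if the paths $\mathbbm{1} \to [u]_S$ and $\mathbbm{1} \to [v]_S$ traced out in $\mathsf{RCay}(S,A)$ end at the same vertex \emph{and} have the same set of transition edges. Writing $T := A^+/{\equiv}$, the first and main task is to check that $\equiv$ is actually a semigroup congruence on $A^+$, i.e. that $u \equiv v$ implies $wu \equiv wv$ and $uw \equiv vw$ for all $w \in A^+$; then $T$ is a semigroup generated (as an $A$-semigroup) by the images of the letters, and I claim $\mathsf{RCay}(T,A) \cong \mathsf{KR}(S,A)$.

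For the congruence check I would argue separately for left and right multiplication. \textbf{Right multiplication.} If $u \equiv v$, then $[u]_S = [v]_S =: t$, so appending a letter $a$ leads to the same vertex $ta$ in both cases; moreover the path for $ua$ is the path for $u$ followed by the single edge $t \xrightarrow{a} ta$, and likewise for $va$, so the transition-edge sets of $[ua]_S$ and $[va]_S$ differ from those of $[u]_S$ and $[v]_S$ by the \emph{same} possible extra edge (namely $t\xrightarrow{a} ta$, included iff $t \not\sim ta$). Hence $ua \equiv va$, and iterating gives $uw \equiv vw$ for all $w$. \textbf{Left multiplication.} This is the subtler direction. Prepending $w$ to $u$ and to $v$ produces, after the common initial segment $\mathbbm{1}\to[w]_S$, the translated traversals $[w]_S \to [w]_S[u]_S$ and $[w]_S \to [w]_S[v]_S$. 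Since $[u]_S=[v]_S$ these end at the same vertex. For the transition edges one uses that right multiplication by a fixed element is a morphism of the right Cayley graph that is \emph{monotone} for the $\prec$-preorder: if there is a path $x \to y$ in $\mathsf{RCay}(S,A)$ then there is a path $xz \to yz$, so $x \sim y$ forces $xz \sim yz$. Consequently an edge $x \xrightarrow{a} xa$ occurring in the $u$-portion of the path for $wu$ fails to be a transition edge (i.e. $[w]_S x \sim [w]_S xa$) exactly when the analysis along $u$ alone records the corresponding behaviour — but here one must be careful: collapsing along $[w]_S\cdot(-)$ can only \emph{destroy} transition edges, never create them, and it destroys the same ones along $u$ as along $v$ because $u$ and $v$ traverse edges with the same transition/non-transition status \emph{in} $\mathsf{RCay}(S,A)$ after the initial segment. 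I would make this precise by noting that the transition-edge set of the $wu$-path is (transition edges of the $\mathbbm{1}\to[w]_S$ path) $\cup$ (those edges $e=x\xrightarrow{a}xa$ traversed in the $u$-portion with $[w]_S x \not\sim [w]_S xa$), and the second set depends only on the equivalence class of $u$: it is a function of the pair (sequence of edges traversed in $\mathsf{RCay}(S,A)$ from $\mathbbm{1}$, modulo the $\equiv$-identification) — and here I may need the auxiliary observation, implicit in the KR construction, that knowing the endpoint and transition-edge set of a path from the root determines which edges of $\mathsf{RCay}(S,A)$ below $[w]_S$ become non-transitional after translation. I expect \emph{this} compatibility-under-left-multiplication step to be the main obstacle, and I would lean on the cited sources \cite[Definition~4.15]{MRS.2011}, \cite{Elston.1999} for the fact that KR is a well-defined expansion to underwrite it.

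Once $\equiv$ is a congruence, the remaining steps are bookkeeping. The universal property of $(A^+,A)$ gives a surmorphism $\psi\colon (A^+,A)\twoheadrightarrow (T,A)$, and by construction two words have the same image under $\psi$ iff the corresponding paths are identified in $\mathsf{KR}(S,A)$; thus the vertices of $\mathsf{KR}(S,A)$ (other than the root) are in bijection with $T$, with the root corresponding to the adjoined $\mathbbm{1}$, exactly as required for $V(\mathsf{RCay}(T,A))=T^{\mathbbm 1}$. For the edges: an edge of $\mathsf{KR}(S,A)$ out of the class $[u]_\equiv$ labelled $a$ goes to $[ua]_\equiv$, and in $T$ we have $\psi(ua)=\psi(u)\cdot a$, which is precisely the edge $\psi(u)\xrightarrow{a}\psi(u)a$ of $\mathsf{RCay}(T,A)$; conversely every edge of $\mathsf{RCay}(T,A)$ arises this way, and the root $\mathbbm 1$ has the correct outgoing edges $\mathbbm 1 \xrightarrow{a} \psi(a)$. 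Finally $\mathsf{KR}(S,A)$ is rooted at the image of the empty path because every class $[u]_\equiv$ is reached from it by the path labelled $u$, so $(T,A)$ is genuinely an $A$-semigroup with $\mathsf{RCay}(T,A)=\mathsf{KR}(S,A)$. I would close by remarking that finiteness of $T$ is automatic: the endpoint ranges over the finite set $S^{\mathbbm 1}$ and the transition-edge set is a subset of the finite transition-edge set of $\mathsf{RCay}(S,A)$, so $\equiv$ has finite index.
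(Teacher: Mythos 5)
Your proposal is correct and is essentially the paper's own argument recast in congruence language: the paper's proof is exactly your left-multiplication step (it verifies $\tau(p)=\tau(q)\Rightarrow\tau(yp)=\tau(yq)$ for every prepended path $y$), using the same two facts you identify — prepending can destroy transition edges but never create them, and a transition edge common to the paths of $u$ and $v$ becomes literally the same edge of $\mathsf{RCay}(S,A)$ after translation by $[w]_S$, so it survives or dies in both simultaneously, making the surviving set a function of the common transition-edge set alone. One slip to fix: the monotonicity you invoke must be for \emph{left} multiplication (a path $x\to y$ yields a path $zx\to zy$, since $y=xm$ gives $zy=(zx)m$), not right multiplication as you state it ($xz\to yz$ fails in a noncommutative semigroup); you in fact apply the correct left-handed version in the line that follows, so nothing downstream breaks.
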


\begin{proof}
Since the graph $\mathsf{KR}(S,A)$ is constructed from the right Cayley graph $\mathsf{RCay}(A^+,A)$,
many of the properties of right Cayley graphs are automatically satisfied. That is, $\mathsf{KR}(S,A)$ is deterministic 
and complete, the root $\mathbbm{1}$ is not the endpoint of any edge, and every vertex is accessible from 
$\mathbbm{1}$. In addition to this, we need to check that if two paths $p=p_1 p_2 \ldots p_\ell$ and $q=q_1 q_2 \ldots q_k$ 
(written in terms of their edge labels $p_i$ and $q_i$) starting at $\mathbbm{1}$ satisfy
$\tau(p)=\tau(q)$, then $\tau(yp)=\tau(yq)$ for any path $y$ in $\mathsf{KR}(S,A)$. Here $yp$ stands for the
path given by the concatenation of the edge labels of $y$ with those of $p$.
The condition $\tau(p)=\tau(q)$ by the definition of the Karnofsky--Rhodes expansion is equivalent to
the conditions that $[p]_S = [q]_S$ and that the set of transition edges in $p$ and $q$ agree. 

Since $\mathsf{RCay}(S,A)$ is a right Cayley graph, we have $[yp]_S = [yq]_S$ for any path $y$. Now suppose 
by contradiction that the transition edges in $yp$ and $yq$ do not agree. Note that a non-transition edge $p_i$ in $p$ 
cannot become a transition edge $p_i$ in $yp$. Hence, without loss of generality, let us assume that there is a transition 
edge $p_i$ in $yp$ that is not a transition edge in $yq$ and that all transition edges among $p_1,\ldots,p_{i-1}$ are also
transition edges in $yq$. Since in $p$ and $q$ all transition edges agree by assumption, let $q_j$ in $q$ be the transition 
edge corresponding to $p_i$ in $p$. In particular, this implies that $\tau(p_1 \ldots p_{i-1}) = \tau(q_1 \ldots q_{j-1})$
and $\tau(p_1 \ldots p_i) = \tau(q_1 \ldots q_j)$. But this in turn implies that $v=\tau(yp_1 \ldots p_{i-1}) 
= \tau(yq_1 \ldots q_{j-1})$ and $w= \tau(yp_1 \ldots p_i) = \tau(yq_1 \ldots q_j)$, meaning that the edge between
vertex $v$ and vertex $w$ is the same in $yp$ and $yq$, contradicting the assumption that the edge is a transition
edge in $yp$, but not in $yq$. Hence $\mathsf{KR}(S,A)$ is a right Cayley graph.
\end{proof}

Since by Proposition~\ref{proposition.KR Cayley} $\mathsf{KR}(S,A)$ is the right Cayley graph
of a semigroup, we can consider the corresponding Markov chain $\mathcal{M}(\mathsf{KR}(S,A))$.
The Markov chain $\mathcal{M}(S,A)$ can be obtained from $\mathcal{M}(\mathsf{KR}(S,A))$ by the 
projection $w \mapsto [w]_S$ for $w \in \mathsf{KR}(S,A)$ since both $(S,A)$ and $\mathsf{KR}(S,A)$
are semigroups.

\begin{proposition}
The Karnofsky--Rhodes expansion of $\mathsf{KR}(S,A)$ is stable, that is
\[
	\mathsf{KR}(\mathsf{KR}(S,A),A) = \mathsf{KR}(S,A).
\]
\end{proposition}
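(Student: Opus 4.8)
The plan is to show that applying the Karnofsky--Rhodes construction a second time does not identify any new pairs of paths and does not separate any pair already identified, so the vertex sets (as quotients of paths in $\mathsf{RCay}(A^+,A)$) coincide. Concretely, write $T := \mathsf{KR}(S,A)$ together with its generating set $A$; by Proposition~\ref{proposition.KR Cayley}, $T$ is a semigroup whose right Cayley graph is $\mathsf{KR}(S,A)$, so the projection $\psi\colon (A^+,A) \to (T,A)$ sending a path from $\mathbbm 1$ to its terminal vertex in $\mathsf{KR}(S,A)$ is exactly the map used to form $\mathsf{KR}(T,A)$. Two paths $p,p'$ in $\mathsf{RCay}(A^+,A)$ are identified in $\mathsf{KR}(T,A)$ iff $[p]_T = [p']_T$ (i.e.\ they already terminate at the same vertex of $\mathsf{KR}(S,A)$) \emph{and} the sets of transition edges of $[p]_T$ and $[p']_T$ in $\mathsf{RCay}(T,A) = \mathsf{KR}(S,A)$ agree. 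So the whole statement reduces to a single claim: \emph{the transition-edge data of a path is unchanged by passing from $\mathsf{RCay}(S,A)$ to $\mathsf{KR}(S,A)$}; more precisely, once $[p]_T = [p']_T$ already holds, the transition-edge condition in $\mathsf{KR}(S,A)$ is automatically satisfied.

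**First I would** set up the comparison of transition edges between the two graphs. Fix a path $p = (\mathbbm 1 \xrightarrow{a_1} \cdots \xrightarrow{a_\ell} v_\ell)$ in $\mathsf{RCay}(A^+,A)$, with intermediate words $w_i = a_1\cdots a_i$. Its image in $\mathsf{RCay}(S,A)$ visits the vertices $[w_i]_S$, and its image in $\mathsf{RCay}(T,A)=\mathsf{KR}(S,A)$ visits the vertices $[w_i]_T$. The $i$-th edge $w_{i-1} \xrightarrow{a_i} w_i$ of $p$ is a transition edge \emph{in $\mathsf{RCay}(S,A)$} iff $[w_{i-1}]_S \not\sim [w_i]_S$, i.e.\ iff there is no $\mathscr R$-class-returning path, and a transition edge \emph{in $\mathsf{KR}(S,A)$} iff $[w_{i-1}]_T \not\sim [w_i]_T$. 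The crucial observation is monotonicity of the quotient map $T \twoheadrightarrow S$ on strong components: since $\mathsf{KR}(S,A) \to \mathsf{RCay}(S,A)$ is a morphism of rooted graphs that preserves labels, any path in $\mathsf{KR}(S,A)$ pushes forward to a path in $\mathsf{RCay}(S,A)$; hence $[w_{i-1}]_T \sim [w_i]_T$ implies $[w_{i-1}]_S \sim [w_i]_S$. Equivalently, every transition edge of a path in $\mathsf{RCay}(S,A)$ remains a transition edge in $\mathsf{KR}(S,A)$. For the reverse direction I would use the defining property of the KR expansion: if the $i$-th edge is \emph{not} a transition edge in $\mathsf{RCay}(S,A)$, then $[w_{i-1}]_S$ and $[w_i]_S$ lie in the same $\mathscr R$-class, so there is a path $q$ in $\mathsf{RCay}(A^+,A)$ from $w_i$ back to a word $w'$ with $[w']_S = [w_{i-1}]_S$; the path $w_{i-1}\xrightarrow{a_i} w_i$ followed by $q$ projects to a loop of $\mathscr R$-classes, and since this closed walk in $\mathsf{RCay}(S,A)$ contains \emph{no transition edges at all}, the KR identification forces $[w_{i-1}]_T = [w']_T$, so $[w_{i-1}]_T \sim [w_i]_T$ and the edge is not a transition edge in $\mathsf{KR}(S,A)$ either.

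**Combining these two directions**, the set of transition edges of a path $p$ is literally the same whether computed in $\mathsf{RCay}(S,A)$ or in $\mathsf{KR}(S,A)$. Therefore the KR-identification relation on paths defined relative to $S$ and the one defined relative to $T = \mathsf{KR}(S,A)$ coincide: $p \sim_{\mathsf{KR}(S,A)} p'$ iff ($[p]_S=[p']_S$ and transition-edge sets agree), and $p \sim_{\mathsf{KR}(T,A)} p'$ iff ($[p]_T = [p']_T$ and transition-edge sets agree in $\mathsf{KR}(S,A)$); but $[p]_T = [p']_T$ is by definition $p \sim_{\mathsf{KR}(S,A)} p'$, which already subsumes the transition-edge condition. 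Hence $\mathsf{KR}(\mathsf{KR}(S,A),A)$ and $\mathsf{KR}(S,A)$ are the same quotient of $\mathsf{RCay}(A^+,A)$, with the same edges (edges are determined by source vertex and label, and labelling is inherited), so they are equal as graphs — and as semigroups, since Proposition~\ref{proposition.KR Cayley} recovers the semigroup from the graph.

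**The main obstacle** is the ``reverse direction'' above: proving that a non-transition edge of $\mathsf{RCay}(S,A)$ cannot become a transition edge after one KR expansion. The subtlety is that it is not enough for $[w_{i-1}]_S$ and $[w_i]_S$ to be $\mathscr R$-equivalent; one must exhibit an explicit return path whose projection to $\mathsf{RCay}(S,A)$ is transition-edge-free, and then invoke the KR identification to collapse it in $\mathsf{KR}(S,A)$. This requires being careful that the ``back'' edges of the $\mathscr R$-class (the double edges in the figures, corresponding to right multiplication within an $\mathscr R$-class) indeed carry no transition edges — which is true precisely because within a single $\mathscr R$-class the preorder $\prec$ is symmetric — and that the resulting equality of vertices in $\mathsf{KR}(S,A)$ genuinely places $[w_{i-1}]_T$ and $[w_i]_T$ in one strong component. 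Once this lemma about transition edges being a ``fixed point'' of the KR operation is nailed down, the stability statement is immediate.
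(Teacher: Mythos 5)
Your proof is correct and takes essentially the same route as the paper: the paper's entire argument is the one-line assertion that ``the set of transition edges does not change from $\mathsf{KR}(S,A)$ to $\mathsf{KR}(\mathsf{KR}(S,A),A)$,'' which is precisely the lemma you isolate and prove in both directions. Your write-up supplies the details the paper declares ``clear from the definition'' --- namely that transition edges persist because paths in $\mathsf{KR}(S,A)$ push forward to $\mathsf{RCay}(S,A)$, and that non-transition edges cannot become transitional because the transition-edge-free return walk inside an $\mathscr{R}$-class is collapsed by the Karnofsky--Rhodes identification.
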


\begin{proof}
This is clear from the definition, since the set of transition edges does not change 
from $\mathsf{KR}(S,A)$ to $\mathsf{KR}(\mathsf{KR}(S,A),A)$.
\end{proof}

For additional properties of the Karnofsky--Rhodes expansion, see~\cite{Elston.1999, MRS.2011,MSS.2015}.

%%%%%%%%%%%%%%%%%%%%%%%%%%%%%%%%%%%%%%%%%%%%%%%%%%%%%%%%
\subsection{The McCammond expansion}
\label{section.mccammond}

The McCammond expansion~\cite{MRS.2011} of a rooted graph is intimately related to the 
unique simple path property.

\begin{definition}[Unique simple path property]
A rooted graph $(\Gamma,r)$ has the \defn{unique simple path property} if for each vertex $v\in V(\Gamma)$ there is
a unique simple path from the root $r$ to $v$.
\end{definition}

As proven in~\cite[Proposition 2.32]{MRS.2011}, the unique simple path property is equivalent to $(\Gamma,r)$
admitting a unique directed spanning tree $\mathsf{T}$. Note that the unique simple path property not only depends 
on the graph $\Gamma$, but also on the chosen root $r$. In this paper, we always choose $r= \mathbbm{1}$.

It was established in~\cite[Section 2.7]{MRS.2011} that every rooted graph $(\Gamma,r)$ has a universal simple
cover, which has the unique simple path property.

\begin{definition}[McCammond expansion]
\label{definition.mccammond}
For a rooted graph $(\Gamma,r)$, define its \defn{McCammond expansion} $(\Gamma^{\mathsf{Mc}},r)$ as the graph with
\begin{equation*}
\begin{split}
	V(\Gamma^{\mathsf{Mc}}) &= \mathsf{Simple}(\Gamma,r),\\
	E(\Gamma^{\mathsf{Mc}}) &= \{(p,a,q) \in V(\Gamma^{\mathsf{Mc}}) \times A \times V(\Gamma^{\mathsf{Mc}}) \mid
	(\tau(p), a, \tau(q)) \in E(\Gamma),\\
	& \qquad \qquad  \qquad \ell(q) = \ell(p)+1 \text{ or } (q\subseteq p \text{ and } \ell(q)\leqslant \ell(p))\}.
\end{split}
\end{equation*}
\end{definition}

Note that by definition there are two types of edges $(p,a,q) \in E(\Gamma^{\mathsf{Mc}})$: either $\ell(q)=\ell(p)+1$
or $\ell(q) \leqslant \ell(p)$ as paths in $\mathsf{Simple}(\Gamma,r)$. The spanning tree $\mathsf{T}$ has vertex set
$V(\Gamma^{\mathsf{Mc}})$ and only those edges $(p,a,q) \in E(\Gamma^{\mathsf{Mc}})$ such that $\ell(q)=\ell(p)+1$.

From now on choose $r = \mathbbm{1}$. The simple path 
\[
	\mathbbm{1} \stackrel{a_1}{\longrightarrow} v_1 \stackrel{a_2}{\longrightarrow} \cdots \stackrel{a_\ell}{\longrightarrow} 
	v_\ell
\]
in $\mathsf{Simple}(\Gamma,\mathbbm{1})$ is naturally indexed by the word $a_1 a_2 \ldots a_\ell$. We will use this labeling
for the McCammond expansion of $\mathsf{KR}(S,A)$. In particular, if $a_1 a_2 \ldots a_\ell \in 
\mathsf{Simple}(\Gamma,\mathbbm{1})$ and $a_1 a_2 \ldots a_\ell a \in \mathsf{Simple}(\Gamma,\mathbbm{1})$, then 
the edge $a_1 a_2 \ldots a_\ell \stackrel{a}{\longrightarrow} a_1 a_2 \ldots a_\ell a$ is in the spanning tree $\mathsf{T}$.
Otherwise we have $a_1 a_2 \ldots a_\ell \stackrel{a}{\longrightarrow} a_1 a_2 \ldots a_k$ for some unique $1\leqslant k < \ell$.
Thus under the right action of $a \in A$ on $a_1 a_2 \ldots a_\ell$, we either move forward in the spanning tree or
fall backwards somewhere on the unique geodesic from $\mathbbm{1}$ to $a_1 a_2 \ldots a_\ell$, 
but staying in the same $\mathscr{R}$-class. An example of a McCammond expansion of a Karnofsky--Rhodes graph is 
given in Figure~\ref{figure.mccammond}.

For a non-simple path in $(\Gamma^{\mathsf{Mc}},\mathbbm{1})$, we can remove loops; it does not matter in which order
these loops are removed. This is also known as the Church--Rosser property~\cite{Church.Rosser.1936} or a
Knuth--Bendix rewriting system. This is proved in~\cite{MRS.2011}.

We denote the McCammond expansion of an $A$-semigroup $(S,A)$ by $\mathsf{Mc}(S,A)$, which is the McCammond
expansion of its right Cayley graph. The McCammond expansion of the right Cayley graph is in general not a right 
Cayley graph, see Remark~\ref{remark.counter example} below. This makes random walks on semigroups more 
difficult to understand. It is however true that the McCammond expansion is stable under repeated McCammond expansions
if the root is unchanged.

\begin{figure}[t]
\begin{center}
\begin{tikzpicture}[auto]
\node (A) at (0, 0) {$\mathbbm{1}$};
\node (B) at (-3,-1) {$a$};
\node(C) at (3,-1) {$b$};
\node(D) at (-2,-2) {$ab$};
\node(E) at (2,-2) {$ba$};
\node(F) at (-4,-2) {$a^2$};
\node(FF) at (-4,-3.5) {$a^2b$};
\node(FFF) at (-4,-5) {$a^2ba$};
\node(G) at (4,-2) {$b^2$};
\node(H) at (-2,-3.5) {$aba$};
\node(HH) at (-2,-5) {$abab$};
\node(EE) at (2,-3.5) {$bab$};
\node(EEE) at (2,-5) {$baba$};
\node(GG) at (4,-3.5) {$b^2a$};
\node(GGG) at (4,-5) {$b^2ab$};
\draw[edge,blue,thick] (A) -- (B) node[midway, above] {$a$\;};
\draw[edge,thick,blue] (A) -- (C) node[midway, above] {\;$b$};
\draw[edge,thick] (B) -- (F) node[midway,right] {$a$};
\path (F) edge[->,thick, red,dashed, bend left=30] node[midway,left] {$a$} (B);
\draw[edge,thick] (F) -- (FF) node[midway,right] {$b$};
\path (FF) edge[->,thick, red, dashed, bend left=30] node[midway,left] {$b$} (F);
\draw[edge,thick] (FF) -- (FFF) node[midway,right] {$a$};
\path (FFF) edge[->,thick, red, dashed, bend left=30] node[midway,left] {$a$} (FF);
\path (FFF) edge[->,thick, red,dashed, bend left=90] node[midway,left] {$b$} (B);
\draw[edge,thick] (B) -- (D) node[midway,left] {$b$};
\path (D) edge[->,thick, red, dashed, bend right=30] node[midway,right] {$b$} (B);
\draw[edge,thick] (D) -- (H) node[midway,left] {$a$};
\path (H) edge[->,thick, red, dashed, bend right=30] node[midway,right] {$a$} (D);
\draw[edge,thick] (H) -- (HH) node[midway,left] {$b$};
\path (HH) edge[->,thick, red, dashed, bend right=30] node[midway,right] {$b$} (H);
\path (HH) edge[->,thick, red,dashed, bend right=90] node[midway,right] {$a$} (B);
\draw[edge,thick] (C) -- (E) node[midway,right] {$a$};
\path (E) edge[->,thick, red,dashed, bend left=30] node[midway,left] {$a$} (C);
\draw[edge,thick] (E) -- (EE) node[midway,right] {$b$};
\path (EE) edge[->,thick, red,dashed, bend left=30] node[midway,left] {$b$} (E);
\draw[edge,thick] (EE) -- (EEE) node[midway,right] {$a$};
\path (EEE) edge[->,thick, red,dashed, bend left=30] node[midway,left] {$a$} (EE);
\draw[edge,thick] (C) -- (G) node[midway,left] {$b$};
\path (G) edge[->,thick, red,dashed, bend right=30] node[midway,right] {$b$} (C);
\draw[edge,thick] (G) -- (GG) node[midway,left] {$a$};
\path (GG) edge[->,thick, red,dashed, bend right=30] node[midway,right] {$a$} (G);
\draw[edge,thick] (GG) -- (GGG) node[midway,left] {$b$};
\path (GGG) edge[->,thick, red,dashed, bend right=30] node[midway,right] {$b$} (GG);
\path (EEE) edge[->,thick, red,dashed, bend left=90] node[midway,left] {$b$} (C);
\path (GGG) edge[->,thick, red,dashed, bend right=90] node[midway,right] {$a$} (C);
\end{tikzpicture}
\end{center}
\caption{\label{figure.mccammond} The McCammond expansion of $(\Gamma,\mathbbm{1}) = \mathsf{KR}(S,A)$ 
of Figure~\ref{figure.KR}.
The edges $(p,a,q) \in E(\Gamma^{\mathsf{Mc}})$ with $\ell(q)=\ell(p)+1$ are solid, whereas the edges
with $\ell(q) \leqslant \ell(p)$ are dashed and red.}
\end{figure}

\begin{remark}
\label{remark.counter example}
The McCammond expansion of a right Cayley graph is not always a right Cayley graph itself. Let $S$ be the semigroup
generated by the elements in $A=\{a_1,a_2,a_3,c\}$ which act on the states $Q=\{0,1,2,3,\square\}$ as follows:
\[
	a_1 \colon \begin{array}{l} 0 \mapsto 1 \\ 3 \mapsto 2 \end{array}, \qquad
	a_2 \colon \begin{array}{l} 1 \mapsto 2 \\ 2 \mapsto 1 \end{array}, \qquad
	a_3 \colon \begin{array}{l} 2 \mapsto 3 \\ 1 \mapsto 3 \end{array}, \qquad
	c \colon \begin{array}{l} 0 \mapsto 0 \\ 1 \mapsto 0 \\ 2 \mapsto 3 \\ 3 \mapsto 0 \end{array},
\]
and otherwise $q \mapsto \square$. For part of the right Cayley graph of $S$, see Figure~\ref{figure.counter example}.
Then $a_1 a_2 a_3$ and $a_1 a_2 a_3 a_1 a_2 a_3$ end at the same vertex in both $\mathsf{RCay}(S,A)$ and
$\mathsf{Mc} \circ \mathsf{KR}(S,A)$, that is $\tau(a_1 a_2 a_3) = \tau(a_1 a_2 a_3 a_1 a_2 a_3)$.
However, multiplying on the left by $c$, we obtain
\[
	c a_1 a_2 a_3  = \tau(c a_1 a_2 a_3) \not = 
	\tau(c a_1 a_2 a_3 a_1 a_2 a_3) = c a_1 a_3
\]
in $\mathsf{Mc} \circ \mathsf{KR}(S,A)$, as can be seen from the right picture in Figure~\ref{figure.counter example}.
\end{remark}

\begin{figure}[t]
\begin{center}
\begin{tikzpicture}[auto]
\node (A) at (0, 0) {$\mathbbm{1}$};
\node (C) at (-1.5,-1.5) {$\bullet$};
\node (D) at (-1.5,-3) {$\bullet$};
\node (E) at (-1.5,-4.5) {$\bullet$};
\node (F) at (-1.5,-6) {$\bullet$};
\node (G) at (-1.5,-7.5) {$\bullet$};
\node (Bp) at (1.5,-1.5) {$\bullet$};
\node (Cp) at (1.5,-3) {$\bullet$};
\node (Dp) at (1.5,-4.5) {$\bullet$};
\node (Ep) at (1.5,-6) {$\bullet$};
\draw[edge,blue,thick] (A) -- (C) node[midway, above] {$a_1\;$};
\draw[edge,blue,thick] (A) -- (Bp) node[midway, above] {$c$};
\path (Bp) edge[->,thick] node[midway,right] {$a_1$} (Cp);
\path (Cp) edge[->,thick] node[midway,right] {$a_2$} (Dp);
\path (Dp) edge[->,thick] node[midway,right] {$a_3$} (Ep);
\path (Cp) edge[->,thick, bend left = 60] node[midway,right] {$a_3$} (Ep);
\path (Ep) edge[->,thick, bend left = 60] node[midway,left] {$a_1$} (Dp);
\path (Dp) edge[->,thick, bend left = 60] node[midway,left] {$a_2$} (Cp);
\path (C) edge[->,thick] node[midway,left] {$a_2$} (D);
\path (D) edge[->,thick,blue] node[midway,left] {$a_3$} (E);
\path (C) edge[->,thick, bend right = 60,blue] node[midway,left] {$a_3$} (E);
\path (E) edge[->,thick] node[midway,left] {$a_1$} (F);
\path (F) edge[->,thick] node[midway,left] {$a_2$} (G);
\path (G) edge[->,thick, bend left = 60] node[midway,left] {$a_3$} (E);
\path (G) edge[->,thick, bend right = 60] node[midway,right] {$a_2$} (F);
\path (F) edge[->,thick, bend right = 60] node[midway,right] {$a_3$} (E);
\path (D) edge[->,thick, bend right = 60] node[midway,right] {$a_2$} (C);
\end{tikzpicture}
\begin{tikzpicture}[auto]
\node (A) at (0, 0) {$\mathbbm{1}$};
\node (C) at (-1.5,-1.5) {$\bullet$};
\node (D) at (-1.5,-3) {$\bullet$};
\node (E) at (-1.5,-4.5) {$\bullet$};
\node (F) at (-1.5,-6) {$\bullet$};
\node (G) at (-1.5,-7.5) {$\bullet$};
\node (Bp) at (1.5,-1.5) {$\bullet$};
\node (Cp) at (1.5,-3) {$\bullet$};
\node (Dpp) at (3,-4.5) {$\bullet$};
\node (Dp) at (1.5,-4.5) {$\bullet$};
\node (Ep) at (1.5,-6) {$\bullet$};
\node (Epp) at (3,-6) {$\bullet$};
\node (N) at (-3,-3) {$\bullet$};
\node (Np) at (-3,-3.5) {$\vdots$};
\draw[edge,blue,thick] (A) -- (C) node[midway, above] {$a_1\;$};
\draw[edge,blue,thick] (A) -- (Bp) node[midway, above] {$c$};
\path (Bp) edge[->,thick] node[midway,right] {$a_1$} (Cp);
\path (Cp) edge[->,thick] node[midway,right] {$a_2$} (Dp);
\path (Dp) edge[->,thick] node[midway,right] {$a_3$} (Ep);
\path (Cp) edge[->,thick] node[midway,right] {$a_3$} (Dpp);
\path (Dpp) edge[->,thick] node[midway,right] {$a_1$} (Epp);
\path (Ep) edge[->,thick, bend left = 40, red, dashed] node[midway,left] {$a_1$} (Dp);
\path (Dp) edge[->,thick, bend left = 40, red, dashed] node[midway,left] {$a_2$} (Cp);
\path (C) edge[->,thick] node[midway,left] {$a_2$} (D);
\path (D) edge[->,thick,blue] node[midway,left] {$a_3$} (E);
\path (C) edge[->,thick, blue] node[midway,left] {$a_3$} (N);
\path (E) edge[->,thick] node[midway,left] {$a_1$} (F);
\path (F) edge[->,thick] node[midway,left] {$a_2$} (G);
\path (G) edge[->,thick, bend left = 60, red, dashed] node[midway,left] {$a_3$} (E);
\path (G) edge[->,thick, bend right = 60, red, dashed] node[midway,right] {$a_2$} (F);
\path (F) edge[->,thick, bend right = 60, red, dashed] node[midway,right] {$a_3$} (E);
\path (D) edge[->,thick, bend right = 60, red, dashed] node[midway,right] {$a_2$} (C);
\path (Epp) edge[->,thick, bend left = 40, red, dashed] node[midway,left] {$a_3$} (Dpp);
\path (Epp) edge[->,thick, bend right = 80, red, dashed] node[midway,right] {$a_2$} (Cp);
\end{tikzpicture}
\caption{\textbf{Left:} Part of the right Cayley graph of the semigroup $(S,A)$ in Remark~\ref{remark.counter example}; 
transition edges are blue; only one $c$ edge is drawn. \textbf{Right:} Part of the expansion
$\mathsf{Mc} \circ \mathsf{KR}(S,A)$.
\label{figure.counter example}}
\end{center}
\end{figure}

%%%%%%%%%%%%%%%%%%%%%%%%%%%%%%%%%%%%%%%%%%%%%%%%%%%%%%%%
\subsection{Normal forms}
\label{section.normal}

Consider $\Gamma(S,A):=\mathsf{Mc} \circ \mathsf{KR}(S,A)$. The non-empty paths in $\Gamma(S,A)$ starting at
$\mathbbm{1}$ are naturally labeled by elements in $A^+$. The elements in the semaphore code $\mathcal{S}(S,A)$ 
of Definition~\ref{definition.S} are in natural correspondence with the paths $p = a_1 a_2 \cdots a_\ell \in A^+$ in the 
rooted graph $\Gamma(S,A)$ starting at $\mathbbm{1}$ such that $[a_1 a_2 \cdots a_\ell]_S \in K(S)$, but
$[a_1 a_2 \cdots a_{\ell-1}]_S \not \in K(S)$. The paths in $\mathcal{S}(S,A)$, considered in $\Gamma(S,A)$, do not 
necessarily have to be simple, that  is, they can contain loops.

Recall from Definition~\ref{definition.mccammond}, that the vertices in $\Gamma(S,A)$ are simple paths (that is,
paths without loops) in the Karnofsky--Rhodes expansion of the right Cayley graph of $(S,A)$. Hence given
an arbitrary path $p$ in $\Gamma(S,A)$, its endpoint $\tau(p)$ is a vertex in $\Gamma(S,A)$ and hence a 
simple path in $\mathsf{KR}(S,A)$, which is $p$ read in $\Gamma(S,A)$ with ``loops stripped away''.

\begin{definition}
\label{definition.normal}
Define the set of \defn{normal forms}
\[
	\mathcal{N}(S,A) = \{ \tau(p) \mid p \in \mathcal{S}(S,A)\}.
\]
\end{definition}

Note that the normal forms in $\mathcal{N}(S,A)$ are precisely the elements in $\mathcal{S}(S,A)$ 
(as considered as elements in $\Gamma(S,A)$) without loops.
In other words, they are the shortest simple paths in $\mathsf{KR}(S,A)$ from the root $\mathbbm{1}$ to the ideal.

\begin{remark}
We can construct a new graph from $\Gamma(S,A)$ by contracting each $\mathscr{R}$-class to a vertex. 
The remaining edges correspond to transition edges in $\mathsf{RCay}(S,A)$. The resulting graph is a tree. Given an
ideal $I\subseteq S$, the vertices $v$ in $\Gamma(S,A)$ such that $[v]_S \in I$ project to a lower set in this tree.
\end{remark}

As outlined in Section~\ref{section.semaphore}, there is a Markov chain $\mathcal{M}^{\mathcal{S}(S,A)}$
associated to the semaphore code $\mathcal{S}(S,A)$.

\begin{example}
\label{example.markov}
Consider $S=Z_2 \times \{0,1\}$ with generators $A=\{a,b\}$ with $a=(z,0)$ and $b=(z,1)$, where
$z$ is the generator of $Z_2$ with $z^2=1$ and the operation on $\{0,1\}$ is multiplication. The right Cayley graph and 
its Karnofsky--Rhodes expansion are given by
\begin{center}
\raisebox{2.7cm}{
\begin{tikzpicture}
\node (EX) at (0,0.6) {$\mathsf{RCay}(S,A)$};
\node (A) at (0, 0) {$\mathbbm{1}$};
\node (B) at (-1.5,-1) {$(z,0)$};
\node(C) at (1.5,-1) {$(z,1)$};
\node (D) at (-1.5,-2.5) {$(1,0)$};
\node (E) at (1.5,-2.5) {$(1,1)$};
\draw[edge,blue,thick] (A) -- (B) node[midway, above] {$a$};
\draw[edge,thick,blue] (A) -- (C) node[midway, above] {$b$};
\draw[edge,thick,blue] (C) -- (D) node[midway, above] {$a$};
\draw[edge,thick,blue] (E) -- (B) node[midway, below] {$a$};
\path (C) edge[->,thick, bend right = 20] node[midway,left] {$b$} (E);
\path (E) edge[->,thick, bend right = 20] node[midway,right] {$b$} (C);
\path (B) edge[->,thick, bend right = 20] node[midway,left] {$a,b$} (D);
\path (D) edge[->,thick, bend right = 20] node[midway,right] {$a,b$} (B);
\end{tikzpicture}}
\hspace{1cm}
\begin{tikzpicture}
\node (EX) at (0,0.6) {$\mathsf{KR}(S,A)$};
\node (A) at (0, 0) {$\mathbbm{1}$};
\node (B) at (-1.5,-1) {$a$};
\node(C) at (1.5,-1) {$b$};
\node (D) at (-1.5,-2.5) {$a^2=ab$};
\node (E) at (3,-2.5) {$b^2$};
\node (F) at (0,-2.5) {$ba$};
\node (G) at (0,-4) {$ba^2=bab$};
\node (H) at (3,-4) {$b^2a$};
\node (I) at (3,-5.5) {$b^2a^2=b^2ab$};
\draw[edge,blue,thick] (A) -- (B) node[midway, above] {$a$};
\draw[edge,thick,blue] (A) -- (C) node[midway, above] {$b$};
\draw[edge,thick,blue] (C) -- (F) node[midway, above] {$a$};
\draw[edge,thick,blue] (E) -- (H) node[midway, right] {$a$};
\path (C) edge[->,thick, bend right = 20] node[midway,left] {$b$} (E);
\path (E) edge[->,thick, bend right = 20] node[midway,right] {$b$} (C);
\path (B) edge[->,thick, bend right = 20] node[midway,left] {$a,b$} (D);
\path (D) edge[->,thick, bend right = 20] node[midway,right] {$a,b$} (B);
\path (F) edge[->,thick, bend right = 20] node[midway,left] {$a,b$} (G);
\path (G) edge[->,thick, bend right = 20] node[midway,right] {$a,b$} (F);
\path (H) edge[->,thick, bend right = 20] node[midway,left] {$a,b$} (I);
\path (I) edge[->,thick, bend right = 20] node[midway,right] {$a,b$} (H);
\end{tikzpicture}
\end{center}
$\mathsf{KR}(S,A)$ is not stable under $\mathsf{Mc}$ since $\mathsf{KR}(S,A)$ does not have the
unique path property: there are several edges, where $a$ and $b$ act in the same way which is reflected
in the fact that $a^2=ab$, $ba^2=bab$ and $b^2a^2=b^2ab$. The minimal ideal is $K(S)=\{(z,0),(1,0)\}$. 

Let us consider the semigroup $S':= S/K(S)$. Then
\[
	\mathcal{N}(S',A) = \{a,ba,b^2a\} \quad \text{and} \quad
	\mathcal{S}(S',A) = \{a,bb^{2k}a, b^2 b^{2k}a \mid k \geqslant 0\}.
\]
The Markov chain $\mathcal{M}^{\mathcal{S}(S',A)}$ is given by the left action
\[
\begin{aligned}
	a.a &= a, & a.bb^{2k}a &= a, & a.b^2 b^{2k}a &= a,\\
	b.a &= ba, & b.bb^{2k}a &= b^2 b^{2k}a, & b. b^2 b^{2k} a &= b b^{2(k+1)} a.
\end{aligned}
\]
\end{example}

%%%%%%%%%%%%%%%%%%%%%%%%%%%%%%%%%%%%%%%%%%%%%%%%%%%%%%%%
\subsection{Lumping}
\label{section.lumping}

We are now ready to construct the two Markov chains $\mathcal{M}(S,A)$ and $\mathcal{M}(\mathsf{KR}(S,A))$
as projections or lumpings of $\mathcal{M}^{\mathcal{S}(S,A)}$ in the case when the minimal ideal $K(S)$
is left zero. The chain $\mathcal{M}^{\mathcal{S}(S,A)}$ is called a \defn{prelibrary} and
$\mathcal{M}(S,A)$ and $\mathcal{M}(\mathsf{KR}(S,A))$ are called \defn{libraries} when $K(S)$ is left zero.

The lumped chain $\mathcal{M}(\mathsf{KR}(S,A))$ is obtained from $\mathcal{M}^{\mathcal{S}(S,A)}$ 
via the equivalence relation $s \sim s'$ if $[s]_{\mathsf{KR}(S,A)} = [s']_{\mathsf{KR}(S,A)}$ for $s,s'\in \mathcal{S}(S,A)$,
whereas the lumped chain $\mathcal{M}(S,A)$ is obtained from $\mathcal{M}(\mathsf{KR}(S,A))$
via the equivalence relation $w \sim w'$ if $[w]_{(S,A)} = [w']_{(S,A)}$ for $w,w' \in \mathsf{KR}(S,A)$.
In particular, for $w,w'\in \mathsf{KR}(S,A)$ (resp. $(S,A)$) 
we transition $w \stackrel{a}{\longrightarrow} w'$ with probability $x_a$, where $w'=aw$.

\begin{corollary}
\label{corollary.stationary}
If $K(S)$ is left zero, the Markov chain $\mathcal{M}(\mathsf{KR}(S,A))$ is the lumping of 
$\mathcal{M}^{\mathcal{S}(S,A)}$. Furthermore, the stationary distribution of the library
$\mathcal{M}(\mathsf{KR}(S,A))$ is given by
\begin{equation*}
\begin{split}
	\Psi^{\mathsf{KR}(S,A)}_w &= \sum_{\substack{s \in \mathcal{S}(S,A)\\ [s]_{\mathsf{KR}(S,A)}=w}}
	\Psi^{\mathcal{S}(S,A)}_s
	= \sum_{\substack{s \in \mathcal{S}(S,A)\\ [s]_{\mathsf{KR}(S,A)}=w}}\; \prod_{a\in s} x_a
	\qquad \text{for all $w \in K(\mathsf{KR}(S,A))$.}
\end{split}
\end{equation*}
\end{corollary}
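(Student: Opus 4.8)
The plan is to combine Theorem~\ref{theorem.semaphore lumping} with Proposition~\ref{proposition.KR Cayley} and Proposition~\ref{proposition.stationary semaphore} in two stages, exactly mirroring the two-step lumping described just before the statement. First, observe that by Proposition~\ref{proposition.KR Cayley} the graph $\mathsf{KR}(S,A)$ is the right Cayley graph of a semigroup, call it $\widehat{S} := \mathsf{KR}(S,A)$, with the same generating set $A$. Moreover the surmorphism $(A^+,A) \to (S,A)$ factors through $(A^+,A) \to (\widehat{S},A) \to (S,A)$, and the transition edges of a path in $\mathsf{RCay}(A^+,A)$ that survive in $\mathsf{RCay}(\widehat{S},A)$ are the same as those in $\mathsf{RCay}(S,A)$ (this is the content of the stability proof of $\mathsf{KR}$). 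Consequently the ideal $\Ideal(\widehat{S},A,K(\widehat{S})) \subseteq A^+$ coincides with $\Ideal(S,A) = \Ideal(S,A,K(S))$: a word $w\in A^+$ lands in $K(\widehat{S})$ iff $[w]_S \in K(S)$, since minimal ideals are detected by the same ``all $\mathscr{R}$-classes collapse'' behavior and the $\mathscr{R}$-class structure of $\widehat{S}$ projects onto that of $S$. Hence $\mathcal{S}(\widehat{S},A) = \mathcal{S}(S,A)$ as semaphore codes over $A$, and $\mathcal{M}^{\mathcal{S}(\widehat S,A)} = \mathcal{M}^{\mathcal{S}(S,A)}$.

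Next I would invoke Theorem~\ref{theorem.semaphore lumping} applied to the pair $(\widehat{S},A)$ in place of $(S,A)$. Its hypothesis is that $K(\widehat{S})$ is left zero; this follows from the assumption that $K(S)$ is left zero, because $\widehat{S}$ is an expansion of $S$ (a quotient map $\widehat{S}\twoheadrightarrow S$ exists) and the left-zero property of the minimal ideal is inherited under such expansions — concretely, for $x,y \in K(\widehat S)$ one has $[xy]_S = [x]_S[y]_S = [x]_S$ in $K(S)$, and since within a minimal left ideal the left action is already faithful/rigid by the Rees structure (see the discussion after Definition~\ref{definition.ideal}), $xy=x$ in $\widehat S$ as well. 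Granting this, Theorem~\ref{theorem.semaphore lumping} tells us $\mathcal{M}(\widehat S,A) = \mathcal{M}(\mathsf{KR}(S,A))$ is a lumping of $\mathcal{M}^{\mathcal{S}(\widehat S,A)} = \mathcal{M}^{\mathcal{S}(S,A)}$, with
\[
	\Psi^{\mathsf{KR}(S,A)}_w = \sum_{\substack{s\in \mathcal{S}(S,A)\\ [s]_{\mathsf{KR}(S,A)}=w}} \Psi^{\mathcal{S}(S,A)}_s
	\qquad \text{for } w\in K(\mathsf{KR}(S,A)).
\]
Finally, substituting the explicit formula $\Psi^{\mathcal{S}(S,A)}_s = \prod_{a\in s} x_a$ from Proposition~\ref{proposition.stationary semaphore} gives the second equality in the statement, and we are done.

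The main obstacle I anticipate is the bookkeeping needed to justify that $\mathcal{S}(\widehat S,A) = \mathcal{S}(S,A)$ and $K(\widehat S)$ is left zero — i.e. that all the structure relevant to the semaphore code and to the minimal ideal is genuinely preserved by the Karnofsky--Rhodes expansion. The delicate point is that $\mathsf{KR}$ changes the semigroup (it is a proper expansion in general), so a priori $K(\widehat S)$ could be larger or structurally different from $K(S)$; one must verify that the quotient $\widehat S \to S$ restricts to an isomorphism (or at least a left-action isomorphism) of minimal ideals, which is where the right-Cayley-graph property from Proposition~\ref{proposition.KR Cayley} does the real work, since it guarantees $\mathscr{R}$-classes of $\widehat S$ map onto $\mathscr{R}$-classes of $S$ compatibly with the edge structure. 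Everything else — applying the already-proven theorem and plugging in the already-proven formula — is formal.
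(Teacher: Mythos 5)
Your overall route is the paper's route: the paper's own (very terse) proof likewise reduces the corollary to rerunning the argument of Theorem~\ref{theorem.semaphore lumping} with $\mathsf{KR}(S,A)$ in place of $S$, leaning on Proposition~\ref{proposition.KR Cayley} for the fact that $\mathsf{KR}(S,A)$ is again a right Cayley graph of a semigroup and on Proposition~\ref{proposition.stationary semaphore} for the second equality, and it records exactly your key observation that $K(\mathsf{KR}(S,A))$ is the set of images of the normal forms. So your decomposition into ``the semaphore codes of $(S,A)$ and $(\widehat S,A)$ agree'', ``$K(\widehat S)$ is left zero'', and ``apply the already-proved theorem to $\widehat S$'' is the right one, and both auxiliary claims are in fact true.

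The genuine gap is in your justification that $K(\widehat S)$ is left zero. From $[xy]_S=[x]_S$ you cannot conclude $xy=x$ in $\widehat S$ by appealing to faithfulness or rigidity of the Rees structure of $K(S)$: the quotient $\widehat S\twoheadrightarrow S$ is not a priori injective over $K(S)$, since elements of $\widehat S$ lying over the same element of $S$ are distinguished by their sets of transition edges, so equality downstairs does not lift. The repair uses the hypothesis more directly. Since $K(S)$ is left zero, its $\mathscr{R}$-classes are singletons, and by stability of the minimal $\mathscr{J}$-class of a finite semigroup one gets $xs=x$ for all $x\in K(S)$ and $s\in S^{\mathbbm{1}}$ (this fact is already implicitly used in the proof of Theorem~\ref{theorem.semaphore lumping}). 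Consequently, once a path in $\mathsf{RCay}(S,A)$ reaches a vertex of $K(S)$ it never leaves that vertex, so appending further letters creates no new transition edges; hence for $u,v\in A^+$ with $[u]_S\in K(S)$ the paths $u$ and $uv$ have the same endpoint and the same transition-edge set, i.e.\ $[uv]_{\widehat S}=[u]_{\widehat S}$. This single observation delivers both of your auxiliary claims at once: the set $T=\{[u]_{\widehat S}\mid [u]_S\in K(S)\}$ is an ideal of $\widehat S$ forming a single $\mathscr{J}$-class (for $x,y\in T$ one has $x=xy\in\widehat S^{\mathbbm{1}}y\widehat S^{\mathbbm{1}}$ and symmetrically), so $T=K(\widehat S)$, which is what makes $\mathcal{S}(\widehat S,A)=\mathcal{S}(S,A)$ — your ``all $\mathscr{R}$-classes collapse'' heuristic is not by itself a proof that the preimage of $K(S)$ is not strictly larger than $K(\widehat S)$ — and the identity $xy=x$ on $T$ says $K(\widehat S)$ is left zero. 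With these two points repaired, the remainder of your argument is formal and coincides with the paper's.
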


\begin{proof}
The second equality follows directly from Proposition~\ref{proposition.stationary semaphore}.
The other statement follows in a similar fashion as in the proof of Theorem~\ref{theorem.semaphore lumping}.
Note also that since $K(S)$ is left zero, we have $K(\mathsf{KR}(S,A))=\{[v]_{\mathsf{KR}(S,A)} \mid v \in \mathcal{N}(S,A)\}$. 
\end{proof}

\begin{remark}
Note that the states $w \in \mathsf{KR}(S,A) \setminus K(\mathsf{KR}(S,A))$ are not recurrent and hence
$\Psi^{\mathsf{KR}(S,A)}_w =0$.
\end{remark}

\begin{remark}
\label{remark.mccammond stable}
If the McCammond expansion of the Karnofsky--Rhodes expansion of the right Cayley graph of $(S,A)$ is
stable, then by Proposition~\ref{proposition.KR Cayley} $\mathsf{Mc}\circ \mathsf{KR}(S,A)= \mathsf{KR}(S,A)$
is a right Cayley graph. In this case Corollary~\ref{corollary.stationary} follows from the fact that 
$\mathsf{Mc} \circ \mathsf{KR}(S,A)$ is a right Cayley graph.
\end{remark}

\begin{example}
\label{example.markov lumped}
Let us continue Example~\ref{example.markov}. The minimal ideal $K(S')$ is trivial and therefore left zero.
Hence by Corollary~\ref{corollary.stationary}, the Markov chain $\mathcal{M}(\mathsf{KR}(S',A))$ is obtained from 
$\mathcal{M}^{\mathcal{S}(S',A)}$ by lumping and is depicted in Figure~\ref{figure.markov}.
\end{example}

\begin{figure}
\begin{tikzpicture}[->,>=stealth',shorten >=1pt,auto,node distance=2.8cm,
                    semithick]
  \tikzstyle{every state}=[fill=red,draw=none,text=white]

  \node[state]         (A) {$a$};
  \node[state]         (B) [right of=A] {$ba$};
  \node[state]         (C) [right of=B] {$b^2a$};

  \path (A) edge [loop left] node {$a$} (A)
                 edge [bend left] node {$b$} (B)
            (B) edge [bend left] node {$b$} (C)
                  edge[bend left] node {$a$} (A)
            (C) edge [bend left] node {$b$} (B)
                  edge [bend left=45] node {$a$} (A);             
\end{tikzpicture}
\caption{Markov chain $\mathcal{M}(\mathsf{KR}(S',A))$ of Example~\ref{example.markov lumped}.
\label{figure.markov}}
\end{figure}

To compute the stationary distributions of Corollary~\ref{corollary.stationary} more explicitly, we will need
the notion of Kleene expressions and Zimin words as discussed in the next section. To this end, we need to break the 
lumping into two steps. First we project each semaphore code word $s \in \mathcal{S}(S,A)$ 
(or path possibly with loops from $\mathbbm{1}$ to $\mathcal{N}(S,A)$) to its normal form (or endpoint) in 
$\mathcal{N}(S,A)$ given by $\tau(s)$. In other words, we map each code word $s \in \mathcal{S}(S,A)$ to the word 
resulting from $s$ by reading it in $\Gamma(S,A)$ with the ``loops stripped away''.
Next we identify any normal forms in $\mathcal{N}(S,A)$ that represent the same element in
$\mathsf{KR}(S,A)$.

For the stationary distribution of the projected Markov chain, we need the preimage of the normal forms
of Definition~\ref{definition.normal}.

\begin{definition}
\label{definition.NF inv}
For a normal form $w \in \mathcal{N}(S,A)$, define
\[
	\mathsf{NF}^{-1}(w) = \{ s \in \mathcal{S}(S,A) \mid \tau(s)=w \} 
	\subseteq \mathcal{S}(S,A).
\]
Furthermore, define
\[
\begin{aligned}
	\mathsf{Red}_{\mathsf{KR}(S,A)}(w) &= \{n \in \mathcal{N}(S,A) \mid [n]_{\mathsf{KR}(S,A)} = w\}
	& &\text{for $w \in K(\mathsf{KR}(S,A))$}.
\end{aligned}
\]
\end{definition}

Note that, if the graph $\Gamma(S,A)$ has non-trivial connected components, then the paths in $\mathsf{NF}^{-1}(w)$
from the root to the ideal can be unbounded in length. With Definition~\ref{definition.NF inv}, we rewrite the stationary
distributions of Corollary~\ref{corollary.stationary} as
\begin{equation}
\label{equation.Psi NF inv}
	\Psi_w^{\mathsf{KR}(S,A)} = \sum_{v \in \mathsf{Red}_{\mathsf{KR}(S,A)}(w)} \;\;
	\sum_{s \in  \mathsf{NF}^{-1}(v)} \Psi_s^{\mathcal{S}(S,A)}.
\end{equation}
Furthermore, if the McCammond expansion of the Karnofsky--Rhodes expansion of $\mathsf{RCay}(S,A)$ is stable,
that is $\mathsf{Mc} \circ \mathsf{KR}(S,A) = \mathsf{KR}(S,A)$, then $\mathsf{Red}_{\mathsf{KR}(S,A)}(w)=\{w\}$ 
and~\eqref{equation.Psi NF inv} simplifies to
\begin{equation}
\label{equation.Psi NF stable}
	\Psi_w^{\mathsf{KR}(S,A)} = \sum_{s \in  \mathsf{NF}^{-1}(w)} \Psi_s^{\mathcal{S}(S,A)}.
\end{equation}

Finally, the Markov chain $\mathcal{M}(S,A)$ can be obtained from $\mathcal{M}(\mathsf{KR}(S,A))$
by lumping as well.

\begin{corollary}
\label{corollary.stationary real}
The Markov chain $\mathcal{M}(S,A)$ is the lumping of $\mathcal{M}(\mathsf{KR}(S,A))$ with 
stationary distribution
\[
	\Psi_w^{(S,A)} = \sum_{\substack{v \in \mathsf{KR}(S,A)\\ [v]_{(S,A)} =w}} \; \Psi_v^{\mathsf{KR}(S,A)}
	\qquad \text{for all $w \in (S,A)$.}
\]
\end{corollary}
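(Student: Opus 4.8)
The plan is to verify the lumping condition~\eqref{equation.lumping} for the partition of $\mathsf{KR}(S,A)$ into fibers of the projection $[\cdot]_{(S,A)} \colon \mathsf{KR}(S,A) \to (S,A)$, and then read off the stationary distribution from the general fact that lumping sums the stationary masses over equivalence classes. Both $\mathsf{KR}(S,A)$ and $(S,A)$ are semigroups generated by $A$ (the former by Proposition~\ref{proposition.KR Cayley}), and there is a surmorphism $\pi \colon \mathsf{KR}(S,A) \to (S,A)$ compatible with the generators, since $\mathsf{KR}(S,A)$ is an expansion of $(S,A)$: the identification of paths in $\mathsf{RCay}(A^+,A)$ used to build $\mathsf{KR}(S,A)$ is a refinement of the identification used to build $\mathsf{RCay}(S,A)$. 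So $[w]_{(S,A)} = [w']_{(S,A)}$ whenever $[w]_{\mathsf{KR}(S,A)} = [w']_{\mathsf{KR}(S,A)}$, and the fibers of $\pi$ form a well-defined partition.

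First I would recall that $\mathcal{M}(\mathsf{KR}(S,A))$ transitions $v \stackrel{a}{\longrightarrow} av$ with probability $x_a$ for $v \in \mathsf{KR}(S,A)$ and $a \in A$, via the left action on the left Cayley graph (equivalently, on the minimal left ideal, by the discussion following Definition~\ref{definition.ideal}). The transition matrix is $\mathcal{T}_{t,v} = \sum_{a \in A,\ t = av} x_a$. To check lumping, fix $w, w' \in \mathsf{KR}(S,A)$ with $\pi(w) = \pi(w')$ and fix a target fiber $F = \pi^{-1}(u)$. I need
\[
	\sum_{t \in F} \mathcal{T}_{t,w} = \sum_{t \in F} \mathcal{T}_{t,w'},
\]
and both sides equal $\sum_{a \in A,\ \pi(aw) \in F} x_a = \sum_{a \in A,\ \pi(a)\pi(w) = u} x_a$; since $\pi$ is a morphism, $\pi(aw) = \pi(a)\pi(w) = \pi(a)\pi(w') = \pi(aw')$ for every $a \in A$, so the two sums agree term by term. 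This gives~\eqref{equation.lumping}. (One should note the minor bookkeeping that faithfulness is achieved by restricting to a minimal left ideal, exactly as in Theorem~\ref{theorem.semaphore lumping}; the same argument applies since the left action factors through $\pi$.)

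Then, by the general lumping principle stated right after~\eqref{equation.lumping} (from~\cite[Lemma~2.5]{LPW.2009}), the stationary distribution of $\mathcal{M}(S,A)$ at state $w$ is the sum of $\Psi^{\mathsf{KR}(S,A)}_v$ over all $v$ in the fiber $\pi^{-1}(w)$, which is precisely the displayed formula. Finally, irreducibility of $\mathcal{M}(S,A)$ on its recurrent states (so that the stationary distribution is unique and the formula is unambiguous) follows from the discussion after Definition~\ref{definition.ideal}, since $\mathcal{M}(S,A)$ restricted to a minimal left ideal is irreducible, and the projection of an irreducible chain along a morphism-induced lumping is again irreducible on the image.

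I do not expect a serious obstacle here; the only slightly delicate point is making sure the projection $\pi \colon \mathsf{KR}(S,A) \to (S,A)$ really is a semigroup morphism respecting generators — i.e., that $\mathsf{KR}(S,A)$ is genuinely an expansion of $(S,A)$ and not just of its right Cayley graph as a labeled graph. This is built into the construction (the transition-edge refinement only splits $\mathscr{R}$-classes further, never merges elements that $\mathsf{RCay}(S,A)$ distinguished), and it was already used implicitly in the paragraph preceding Proposition~\ref{proposition.KR Cayley} where it is observed that ``$\mathcal{M}(S,A)$ can be obtained from $\mathcal{M}(\mathsf{KR}(S,A))$ by the projection $w \mapsto [w]_S$.'' So the proof is essentially a clean application of that observation together with the lumping criterion.
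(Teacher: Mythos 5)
Your proof is correct and follows essentially the same route as the paper: the paper's entire proof is the one-line observation that the lumping condition~\eqref{equation.lumping} holds because both $(S,A)$ and $\mathsf{KR}(S,A)$ are semigroups, and your fiber-by-fiber computation $\pi(aw)=\pi(a)\pi(w)=\pi(a)\pi(w')=\pi(aw')$ is exactly the detailed unpacking of that fact. The additional remarks on irreducibility and on $\mathsf{KR}(S,A)$ being a genuine expansion of $(S,A)$ are sound but not needed beyond what the paper already establishes.
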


\begin{proof}
The lumping condition~\eqref{equation.lumping} follows from the fact that both $(S,A)$ and $\mathsf{KR}(S,A)$
are semigroups.
\end{proof}

%%%%%%%%%%%%%%%%%%%%%%%%%%%%%%%%%%%%%%%%%%%%%%%%%%%%%%%%
\subsection{Kleene expressions and Zimin words}
\label{section.kleene}

We are now going to discuss how to compute the stationary distribution $\Psi_w^{\mathsf{KR}(S,A)}$ of 
Corollary~\ref{corollary.stationary} more explicitly using Kleene expressions and Zimin words.

Given a set $L$, set $L^0 = \{\varepsilon \}$ given by the empty string, $L^1 = L$, and recursively
$L^{i+1} = \{wa \mid w \in L^i, a \in L\}$ for each integer $i>0$. Then the \defn{Kleene star} of $L$ is
\[
	L^\star = \bigcup_{i\geqslant 0} L^i.
\]

The collection of \defn{regular languages} over an alphabet $A$ is the smallest collection of languages
\begin{itemize}
\item containing the empty language and the singletons $\{a\}$ for $a \in A$;
\item closed under union, concatenation and Kleene star.
\end{itemize}
A \defn{Kleene expression} is an expression involving letters in $A$, unions, $\cdot$, and $\star$
and is a compact way to describe a regular language.

\defn{Zimin words} allow us to rewrite the star of a union in terms of just products and star:
\begin{equation}
\label{equation.star union}
	\left( \{a\} \cup \{b\} \right)^\star = \{a,b\}^\star = a^\star ( ba^\star)^\star.
\end{equation}
Iterating~\eqref{equation.star union}, we can also express the star of the union of three elements in terms of multiplication
and star as
\[
	\{a,b,c\}^\star = \{a,b\}^\star (c \{a,b\}^\star)^\star = a^\star (ba^\star)^\star \left( c a^\star (ba^\star)^\star\right)^\star,
\]
or more generally 
\begin{multline}
\label{equation.zimin}
	\{a_1,a_2,\ldots,a_n\}^\star = \{a_1,\ldots, a_{n-1}\}^\star ( a_n \{a_1,\ldots,a_{n-1}\}^\star)^\star\\
	= \{a_1,\ldots,a_{n-2}\}^\star \left( a_{n-1} \{a_1,\ldots,a_{n-2}\}^\star \right)^\star
	  \left( a_n \{a_1,\ldots,a_{n-2}\}^\star \left( a_{n-1} \{a_1,\ldots,a_{n-2}\}^\star \right)^\star \right)^\star = \cdots .
\end{multline}

The following result was proven in~\cite[Section 3.2]{MRS.2011}.

\begin{theorem} \cite{MRS.2011}
For $w \in \mathcal{N}(S,A)$, $\mathsf{NF}^{-1}(w)$ has a Kleene expression without union, that is,
only using letters in $A$, $\cdot$ and $\star$.
\end{theorem}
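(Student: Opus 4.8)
The plan is to read $\mathsf{NF}^{-1}(w)$ off the geometry of the finite graph $\Gamma(S,A)=\mathsf{Mc}\circ\mathsf{KR}(S,A)$, using the structure of a McCammond expansion: it carries a directed spanning tree $\mathsf T$ rooted at $\mathbbm1$, and every non-tree edge is a \emph{back edge} running from a vertex $p$ to a proper initial segment $q\subseteq p$ lying in the same $\mathscr R$-class as $p$. Fix $w=a_1\cdots a_\ell\in\mathcal N(S,A)$ and let $\mathbbm1=w_0\xrightarrow{a_1}w_1\xrightarrow{a_2}\cdots\xrightarrow{a_\ell}w_\ell=w$ be the unique simple path from the root to $w$ in $\Gamma(S,A)$, where $w_i=a_1\cdots a_i$. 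Let $\Gamma'$ be the induced subgraph on the vertices $v$ with $[v]_S\notin K(S)$; since $K(S)$ is an ideal and back edges preserve $\mathscr R$-classes, no edge leaves $\Gamma'$, so by Definition~\ref{definition.S} a word lies in $\mathcal S(S,A)$ exactly when, read in $\Gamma(S,A)$, it labels a path that stays in $\Gamma'$ and then takes one further edge into the ideal. Because $w$ is a normal form we have $w_0,\dots,w_{\ell-1}\in\Gamma'$ and $w_\ell=w\in K(S)$; moreover every vertex of $\Gamma(S,A)$ having $w$ as an initial segment maps into $K(S)$, so the only edge into $w$ a semaphore-code path can use is the tree edge $w_{\ell-1}\xrightarrow{a_\ell}w$. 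Hence $\mathsf{NF}^{-1}(w)$ equals the language of paths $\mathbbm1\to w_{\ell-1}$ inside $\Gamma'$ followed by the letter $a_\ell$, and it remains to give this a union-free Kleene expression.

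First I would prove a ``forward edges build prefixes'' lemma: the only tree edge producing a vertex with $w_i$ as an initial segment is $w_{i-1}\xrightarrow{a_i}w_i$, so any path reaching a vertex with $w_i$ as an initial segment must traverse $w_{j-1}\xrightarrow{a_j}w_j$ for every $j\le i$. Using this, together with the fact that $\mathbbm1$ has no incoming edge in $\Gamma(S,A)$, a ``last visit to $w_i$'' decomposition along the geodesic yields
\[
	\mathsf{NF}^{-1}(w)=\mathsf{Loop}(w_0)\,a_1\,\mathsf{Loop}(w_1)\,a_2\cdots\mathsf{Loop}(w_{\ell-1})\,a_\ell,
\]
where $\mathsf{Loop}(v)$ denotes the set of labels of closed paths at $v$ that never fall strictly below $v$, i.e.\ that stay among the vertices of $\Gamma'$ having $v$ as an initial segment. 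So it suffices to show each $\mathsf{Loop}(v)$ is union-free, which I would do by strong induction on the (finite) number of vertices of $\Gamma'$ having $v$ as an initial segment. Decomposing a closed path at $v$ at its successive returns to $v$ gives $\mathsf{Loop}(v)=E(v)^\star$, where $E(v)$ is the set of \emph{excursions}, the closed paths at $v$ with no intermediate visit to $v$. Unwinding an excursion along $\mathsf T$ writes $E(v)$ as a \emph{finite union}, over tree-paths $v=u^{(0)}\xrightarrow{b_1}u^{(1)}\xrightarrow{b_2}\cdots\xrightarrow{b_k}u^{(k)}$ ending at a vertex $u^{(k)}$ carrying a back edge $u^{(k)}\xrightarrow{c}v$, of the languages $b_1\,\mathsf{Loop}(u^{(1)})\,b_2\cdots b_k\,\mathsf{Loop}(u^{(k)})\,c$; each of these is a concatenation of letters with loop languages $\mathsf{Loop}(u^{(j)})$ for vertices $u^{(j)}\ne v$, which have strictly fewer descendants than $v$ and so are union-free by the induction hypothesis. (In the base case $v$ has no descendants, so $\mathsf{Loop}(v)=D^\star$ for a set $D\subseteq A$ of self-loop labels at $v$, union-free by the Zimin identity~\eqref{equation.zimin}.) Finally, writing $E(v)=X_1\cup\cdots\cup X_r$ with each $X_i$ union-free, repeated application of~\eqref{equation.star union} in the form $(X\cup Y)^\star=X^\star(YX^\star)^\star$ turns $E(v)^\star$ into a union-free expression; substituting back into the displayed product gives the theorem.

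The main obstacle is the bookkeeping the back edges create. A single excursion need not itself be union-free: it may return to $v$ from several different descendants of a child of $v$, and such a descendant may carry several back edges to $v$, so the proof has to be organised so that every union that appears is ultimately a \emph{star} of a union — which is exactly what happens because each loop language comes prepackaged as $E(v)^\star$. Similarly, one must check that the ``last visit'' and excursion decompositions really do confine the relevant sub-paths to the intended regions, even though an $\mathscr R$-class--respecting back edge may climb several tree levels in a single step; verifying this confinement, and the ``forward edges build prefixes'' lemma, is where the real work lies.
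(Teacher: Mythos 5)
The paper does not actually prove this statement --- it is imported wholesale from \cite[Section~3.2]{MRS.2011} --- so there is no internal proof to compare against; judged on its own terms, your argument is correct and is essentially the standard one for graphs with the unique simple path property (the same mechanism as in the cited source): exploit the spanning tree of $\mathsf{Mc}\circ\mathsf{KR}(S,A)$, decompose paths by last visits along the geodesic, and kill the unions with the Zimin identity~\eqref{equation.star union}. The load-bearing facts are all verifiable from Definition~\ref{definition.mccammond}: the only edge entering the cone $\{q \mid w_i \subseteq q\}$ from outside is the tree edge $w_{i-1}\xrightarrow{a_i}w_i$ (tree edges into the cone come from inside except at its apex, and back edges $(p,a,q)$ with $q\subseteq p$ never enter a cone from outside); distinct child cones of a vertex cannot be connected without passing through that vertex or an ancestor; and every vertex extending $w$ maps into the ideal $K(S)$, which forces the final step of a semaphore path to be the tree edge into $w$ and justifies your identification of $\mathsf{NF}^{-1}(w)$ with $\mathsf{Loop}(w_0)a_1\cdots\mathsf{Loop}(w_{\ell-1})a_\ell$. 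The excursion decomposition $\mathsf{Loop}(v)=E(v)^\star$ with $E(v)$ a \emph{finite} union (finiteness needs, and has, that $\mathsf{KR}(S,A)$ and hence its McCammond expansion are finite) of concatenations of letters and loop languages at proper descendants then sets up a valid strong induction on $|\{q\mid v\subseteq q\}|$, and $(X\cup Y)^\star=X^\star(YX^\star)^\star$ holds for arbitrary languages, not just letters, so the final step is sound. Two cosmetic corrections: non-tree edges satisfy $q\subseteq p$ with $\ell(q)\leqslant\ell(p)$, so self-loops ($q=p$) are allowed and ``proper initial segment'' is too strong (you do handle them in the base case); and ``no edge leaves $\Gamma'$'' should read ``no edge leaves the complement of $\Gamma'$'', i.e.\ the ideal is absorbing. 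Neither affects the argument. Note also that your decomposition, being a bijective factorization of each semaphore word, would additionally yield Conjecture~\ref{conjecture.kleene} for the expressions so constructed.
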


We can now use a Kleene expression for $\mathsf{NF}^{-1}(w)$ in~\eqref{equation.Psi NF inv} 
to compute the stationary distribution $\Psi_w^{\mathsf{KR}(S,A)}$. The advantage in doing so is
that one can immediately obtain rational expressions. Namely, using the geometric series, we find that
\[
	\sum_{s \in a^\star} \; \Psi^{\mathcal{S}}_s = \sum_{\ell=0}^\infty x_a^\ell = \frac{1}{1-x_a}.
\]
Similarly, using the Zimin words~\eqref{equation.star union}
\[
	\sum_{s \in \{a,b\}^\star} \Psi^{\mathcal{S}}_s = \sum_{s \in a^\star (ba^\star)^\star} \Psi^{\mathcal{S}}_s 
	= \frac{1}{1-x_a} \cdot \frac{1}{1-\frac{x_b}{1-x_a}}
	= \frac{1}{1-x_a-x_b}.
\]
In general, using the recursion~\eqref{equation.zimin} we derive by induction
\begin{equation}
\label{equation.geometric}
	\sum_{s \in \{a_1,a_2,\ldots,a_n\}^\star} \Psi^{\mathcal{S}}_s = \frac{1}{1-x_{a_1} - x_{a_2} - \cdots - x_{a_n}}.
\end{equation}

To take advantage of this, it is important that every element in $\mathsf{NF}^{-1}(v)$
appearing in~\eqref{equation.Psi NF inv} occurs exactly once in the Kleene expression. 
This is not necessarily true for any Kleene expression. For example, in $a^\star a^\star$  the letter $a$ appears
more than once. The condition is, however, ensured if
\begin{equation}
\label{equation.prob condition}
	\sum_{w\in K(\mathsf{KR}(S,A))} \Psi^{\mathsf{KR}(S,A)}_w = 1.
\end{equation}

\begin{conjecture}
\label{conjecture.kleene}
The Kleene expressions constructed in~\cite{MRS.2011} satisfy~\eqref{equation.prob condition}.
\end{conjecture}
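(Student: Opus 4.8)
The plan is to separate the statement into an elementary combinatorial identity for the genuine stationary distribution and a harder assertion about the particular expressions of~\cite{MRS.2011}, namely that they are unambiguous; the conjecture is really the latter. First I would sum~\eqref{equation.Psi NF inv} over all $w\in K(\mathsf{KR}(S,A))$ and reorganize the triple sum. By Definition~\ref{definition.normal}, $s\mapsto\tau(s)$ is a surjection $\mathcal{S}(S,A)\twoheadrightarrow\mathcal{N}(S,A)$, so $\{\mathsf{NF}^{-1}(v)\mid v\in\mathcal{N}(S,A)\}$ partitions $\mathcal{S}(S,A)$; and since $K(S)$ is left zero, $K(\mathsf{KR}(S,A))=\{[v]_{\mathsf{KR}(S,A)}\mid v\in\mathcal{N}(S,A)\}$ (recorded in the proof of Corollary~\ref{corollary.stationary}), so $\{\mathsf{Red}_{\mathsf{KR}(S,A)}(w)\mid w\in K(\mathsf{KR}(S,A))\}$ partitions $\mathcal{N}(S,A)$. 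Hence the triple sum collapses to $\sum_{s\in\mathcal{S}(S,A)}\Psi^{\mathcal{S}(S,A)}_s$, which equals $1$ by Proposition~\ref{proposition.stationary semaphore} (equivalently: $\Ideal(S,A)=\varphi^{-1}(K(S))$ is an ideal of $A^+$ containing an extension of every word, so $\mathcal{S}(S,A)$ is a maximal prefix code and $\sum_{a\in A}x_a=1$). Thus~\eqref{equation.prob condition} holds verbatim for the true $\Psi^{\mathsf{KR}(S,A)}$.

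Next I would pin down what the conjecture actually asks. As the text before~\eqref{equation.prob condition} explains, one wants to compute $\Psi^{\mathsf{KR}(S,A)}_w$ by substituting $a\mapsto x_a$ in the union-free Kleene expression $E_w$ for $\mathsf{NF}^{-1}(w)$ and applying~\eqref{equation.geometric}. This substitution returns exactly $\sum_{s\in\mathsf{NF}^{-1}(w)}\Psi^{\mathcal{S}(S,A)}_s$ if and only if $E_w$ is \emph{unambiguous}, i.e.\ parses each of its words exactly once; otherwise (as for $a^\star a^\star$) it overcounts. Since overcounting can only increase the value, the identity of the previous paragraph forces the evaluated values to sum to at least $1$, with equality precisely when every $E_w$ is unambiguous. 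So the conjecture is equivalent to the assertion that the expressions built in~\cite[Section 3.2]{MRS.2011} are unambiguous.

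To prove that, I would induct on the spanning tree $\mathsf{T}$ of $\Gamma(S,A)=\mathsf{Mc}\circ\mathsf{KR}(S,A)$, using that $\Gamma(S,A)$ is a finite, complete, deterministic automaton in which every non-tree edge goes from a vertex to a vertex on its geodesic from $\mathbbm{1}$ (an ancestor, possibly itself) within the same $\mathscr{R}$-class. Determinism pins down the unique path read by a word $s$: at each vertex the next letter decides, with no alternative, whether to descend a tree edge or fall back along the unique back edge carrying that letter. The construction of~\cite{MRS.2011} produces $E_w$ along the geodesic $\mathbbm{1}\to v_1\to\cdots\to v_\ell=w$ by interleaving the geodesic edges with star factors, one per vertex, whose contents describe the first-return excursions at that vertex; by induction these excursion languages already have unambiguous expressions, and since the excursions available at a vertex begin with pairwise distinct letters (determinism again), no word admits two parses, so uniqueness propagates up $\mathsf{T}$ to $E_w$.

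The delicate part — and presumably why this is stated as a conjecture — is that back edges can return to \emph{different} ancestors, so the star factors of the various geodesic vertices are nested, and one must rule out splitting a word between an excursion star and the edge or star immediately following it in more than one way. This ought to follow from determinism together with the ancestor property (an excursion that returns to a vertex $v$ cannot re-enter the portion of $\mathsf{T}$ strictly above $v$ without first revisiting $v$), but turning this into an invariant that exactly matches the normal form of the~\cite{MRS.2011} expression is the subtle step and the main obstacle. If a direct argument becomes unwieldy, a fallback is to verify instead that the evaluated $E_w$ equals the first-passage probability that the $(x_a)$-random walk on $\Gamma(S,A)$ started at $\mathbbm{1}$ first meets the ideal in the normal form $w$: the~\cite{MRS.2011} recursion should coincide with the back-substitution solving the associated linear system, which is triangular thanks to the tree-plus-ancestor structure, and summing over $w$ then gives $1$ because the walk reaches the ideal almost surely.
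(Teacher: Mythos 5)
The paper does not actually prove this statement: it is left as a conjecture, with only the remark that the proof should be ``a straightforward induction on the proof in~\cite{MRS.2011} of the existence of such Kleene expressions.'' So the question is whether your proposal closes that gap, and it does not. That said, your first two paragraphs are correct and are a genuinely useful sharpening of what the conjecture asserts: summing~\eqref{equation.Psi NF inv} over $w\in K(\mathsf{KR}(S,A))$ does collapse, via the two partitions you describe, to $\sum_{s\in\mathcal{S}(S,A)}\Psi^{\mathcal{S}(S,A)}_s=1$ by Proposition~\ref{proposition.stationary semaphore}, and since evaluating an ambiguous union-free Kleene expression can only overcount (all $x_a>0$), condition~\eqref{equation.prob condition} is equivalent to unambiguity of every expression $E_w$ from~\cite[Section 3.2]{MRS.2011}. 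This reduction is cleaner and more explicit than anything in the paper.

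The gap is in your third and fourth paragraphs, and you flag it yourself: the case where back edges within a single excursion return to \emph{different} ancestors, so that the star factors attached to distinct geodesic vertices are nested and a word could a priori be split between consecutive factors in more than one way, is exactly where the induction must do real work, and you leave it as ``the subtle step and the main obstacle.'' Determinism of $\Gamma(S,A)$ is not enough on its own: it guarantees that a word traces a unique path in the automaton, but unambiguity of $E_w$ is a statement about unique factorization of that word according to the syntactic shape of the expression, and one still has to show that the recursion of~\cite{MRS.2011} assigns each loop of the path to exactly one star factor. Your fallback via first-passage probabilities has the same issue in disguise, since identifying the evaluated expression with a first-passage probability already presupposes that the recursion counts each trajectory exactly once. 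So what you have is a correct reframing plus a plausible plan --- consistent with, and more detailed than, the authors' own one-sentence suggestion --- but not a proof; the statement remains open at precisely the point the paper acknowledges.
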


In all examples worked out in this paper, Conjecture~\ref{conjecture.kleene} holds.
We believe that the proof is a straightforward induction on the proof in~\cite{MRS.2011} of the existence of such
Kleene expressions, but the details are beyond the goal of this paper.

\begin{example}
Continuing Examples~\ref{example.markov} and~\ref{example.markov lumped}, we find that for the elements
in $\mathcal{N}(S',A)$
\[
	\mathsf{NF}^{-1}(a) = a, \qquad  \mathsf{NF}^{-1}(ba) = b(b^2)^\star a, \qquad
	\mathsf{NF}^{-1}(b^2a) = b^2 (b^2)^\star a.
\]
Since $K(S')$ is left zero, we find by~\eqref{equation.Psi NF stable} that for $w \in \mathcal{N}(S',A)$
\[
	\Psi_w^{\mathsf{KR}(S',A)} = \sum_{s\in \mathsf{NF}^{-1}(w)} \Psi_s^{\mathcal{S}(S',A)}
\]
and specifically
\[
	\Psi_{a}^{\mathsf{KR}(S',A)} = x_a, \qquad \Psi_{ba}^{\mathsf{KR}(S',A)} = \frac{x_a x_b}{1-x_b^2},
	\qquad \Psi_{b^2a}^{\mathsf{KR}(S',A)} = \frac{x_a x^2_b}{1-x_b^2}.
\]
Note that since $x_a+x_b=1$
\[
	\Psi_{a}^{\mathsf{KR}(S',A)} + \Psi_{ba}^{\mathsf{KR}(S',A)} + \Psi_{b^2a}^{\mathsf{KR}(S',A)} =
	x_a + \frac{x_a x_b (1+x_b)}{1-x_b^2} = x_a + \frac{x_a x_b}{1-x_b} = x_a + x_b =1,
\]
verifying~\eqref{equation.prob condition}.
\end{example}

For another example, see Section~\ref{section.Tsetlin library}.

%%%%%%%%%%%%%%%%%%%%%%%%%%%%%%%%%%%%%%%%%%%%%%%%%%%%%%%%
\subsection{The bar and flat operation}
\label{section.bar and flat}

We will now discuss the \defn{bar} and \defn{flat operation}~\cite{LRS.2017}, which will make it possible to extend 
Corollary~\ref{corollary.stationary} to any finite $A$-semigroup $S$, not just those whose minimal ideal $K(S)$ is left zero.

In~\cite{LRS.2017}, $(S,A)^{\mathsf{bar}}$ was defined by considering the right regular representation of $s$
acting faithfully on $S^{\mathbbm{1}}$ and by adding all constant maps on $S^{\mathbbm{1}}$; the constant map
onto $s\in S^{\mathbbm{1}}$ is denoted by $\overline{s}$. The semigroup multiplication of $(S,A)^{\mathsf{bar}}$
is the composition of functions acting on the right. 
In this spirit, let $(S,A)^{\mathsf{bar}}=(S^\mathsf{bar},A\cup \{\overline{\mathbbm{1}}\})$, where $S^\mathsf{bar} = 
S \cup \overline{S} \cup \overline{\{\mathbbm{1}}\}$ and $\overline{S} = \{\overline{x} \mid x \in S\}$. 
The element $\overline{\mathbbm{1}}$ acts as a constant map 
$z \cdot \overline{\mathbbm{1}} = \overline{\mathbbm{1}}$ and in addition $\overline{\mathbbm{1}} \cdot z = \overline{z}$ 
for any $z\in S^\mathsf{bar}$, where we interpret $\overline{\overline{x}}=\overline{x}$ if $z=\overline{x}$. Furthermore, for 
any $x,y\in S$
\[
	x \cdot \overline{y} = \overline{y}, \qquad \overline{x} \cdot \overline{y} = \overline{y}, \qquad \text{and} \qquad
	\overline{x} \cdot y = \overline{x\cdot y}.
\]

Given a semigroup $S$, let $S^\mathsf{op}$ denote the semigroup obtained by reversing the multiplication on $S$.
Then $(S,A)^\flat = (((S,A)^\mathsf{op})^\mathsf{bar})^\mathsf{op}$. The relations with respect to $\mathsf{bar}$ 
get reversed, that is, $\widetilde{\mathbbm{1}} \cdot z = \widetilde{\mathbbm{1}}$ and $z \cdot \widetilde{\mathbbm{1}} = 
\widetilde{z}$ for any $z\in S^\flat$. Furthermore, for any $x,y\in S$
\[
	\widetilde{y} \cdot x = \widetilde{y}, \qquad \widetilde{y} \cdot \widetilde{x} = \widetilde{y}, \qquad \text{and} \qquad
	y \cdot \widetilde{x} = \widetilde{y\cdot x}.
\]
In particular, $K((S,A)^\flat) = \{\widetilde{z} \mid z \in S\} \cup \{\widetilde{\mathbbm{1}}\}$, which is left zero.

\begin{remark}
Note that up to the labeling of the vertices, $\mathsf{KR}(S\cup \{\square\}, A\cup \{\square\})$ is exactly 
$\left(\mathsf{KR}(S,A)\right)^\flat$. This is true since right multiplication in $\left(\mathsf{KR}(S,A)\right)^\flat$
by any $\widetilde{z}$ with $z\in S$ or $\widetilde{\mathbbm{1}}$ lands in $K(\left(\mathsf{KR}(S,A)\right)^\flat)$ 
and similarly right multiplication by $\square$ in $\mathsf{KR}(S\cup \{\square\}, A\cup \{\square\})$ also lands
in $K(\mathsf{KR}(S\cup\{\square\},A\cup\{\square\}))$.
Hence the flat $\flat$ operation can be interpreted as adding a new zero to the semigroup.
\end{remark}

We may now generalize Corollary~\ref{corollary.stationary}.
Recall the hypothesis of Corollary~\ref{corollary.stationary} that $K(S)$ is left zero. In 
Corollary~\ref{corollary.stationary general} below there is no restriction on $K(S)$.
The proof uses the flat construction to reduce the general case to the previous case and then limiting the 
probability of the new variable to zero.

\begin{corollary}
\label{corollary.stationary general}
If $K(S)$ is not left zero, the Markov chain $\mathcal{M}(\mathsf{KR}(S,A))$ has stationary distribution
\begin{equation}
\label{equation.psi limit}
	\Psi^{\mathsf{KR}(S,A)}_w 
	= \lim_{x_\square \to 0} \bigl(\sum_{\substack{s \in \mathcal{S}(S\cup \{\square\},A\cup\{\square\})\\ 
	[s]_{\mathsf{KR}(S\cup \{\square\},A\cup \{\square\})}=w\square}}\; \prod_{a\in s} x_a \bigr)
	\qquad \text{for all $w \in K(\mathsf{KR}(S,A))$.}
\end{equation}
\end{corollary}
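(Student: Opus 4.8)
The plan is to reduce the general case to Corollary~\ref{corollary.stationary} applied to the flattened semigroup $(S,A)^\flat$, which by construction has left-zero minimal ideal $K((S,A)^\flat) = \{\widetilde z \mid z\in S\}\cup\{\widetilde{\mathbbm 1}\}$. First I would invoke the preceding Remark identifying $\mathsf{KR}(S\cup\{\square\},A\cup\{\square\})$ with $\left(\mathsf{KR}(S,A)\right)^\flat$ up to relabeling, so that adjoining the new generator $\square$ with probability $x_\square$ is literally the flat construction on the Karnofsky--Rhodes expansion. Since $K\bigl((\mathsf{KR}(S,A))^\flat\bigr)$ is left zero, Corollary~\ref{corollary.stationary} (together with the reformulation~\eqref{equation.Psi NF inv} and Proposition~\ref{proposition.stationary semaphore}) gives
\[
	\Psi^{\mathsf{KR}(S\cup\{\square\},A\cup\{\square\})}_{w\square}
	= \sum_{\substack{s \in \mathcal{S}(S\cup \{\square\},A\cup\{\square\})\\
	[s]_{\mathsf{KR}(S\cup \{\square\},A\cup \{\square\})}=w\square}}\; \prod_{a\in s} x_a
\]
for each $w\in K(\mathsf{KR}(S,A))$, where the recurrent states of the flattened chain are exactly the words ending in $\square$, i.e.\ the elements $\widetilde w$.

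Next I would analyze the behavior of the transition matrix of $\mathcal{M}(\mathsf{KR}(S\cup\{\square\},A\cup\{\square\}))$ as $x_\square\to0$. Rescaling the remaining probabilities so that $\sum_{a\in A}x_a = 1$ in the limit, the transition operator converges entrywise to the transition operator of $\mathcal{M}(\mathsf{KR}(S,A))$ restricted to the subsemigroup generated by $A$; the $\square$-edges (which all point into the left-zero ideal $\overline{S}\cup\{\overline{\mathbbm 1}\}$) are weighted by $x_\square\to0$ and drop out. The key point is that the stationary distribution depends continuously on the transition probabilities on the region where the chain stays irreducible — more precisely, each $\Psi^{\mathsf{KR}(S\cup\{\square\},A\cup\{\square\})}_{w\square}$, being a finite sum of geometric series in the $x_a$ (via the union-free Kleene expressions of Theorem~\ref{theorem.semaphore lumping}'s companion and~\eqref{equation.geometric}), is a rational function of the $x_a$, and one checks it extends continuously to $x_\square = 0$. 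Taking the limit and matching up the states $\widetilde w \leftrightarrow w$ under the identification of the flat with adjoining a zero yields exactly~\eqref{equation.psi limit}.

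The main obstacle, and the step requiring genuine care, is justifying that the $x_\square\to0$ limit of the stationary distribution of the flattened chain equals the stationary distribution of $\mathcal{M}(\mathsf{KR}(S,A))$ rather than some other distribution supported on $K(\mathsf{KR}(S,A))$. The subtlety is that at $x_\square = 0$ the flattened chain is no longer irreducible (the states $\widetilde z$ become transient-to-absorbing into the non-flat part), so one cannot simply invoke continuity of the stationary vector on the whole parameter simplex. The clean way around this is to work with the explicit rational-function formula for $\Psi^{\mathsf{KR}(S\cup\{\square\},A\cup\{\square\})}_{w\square}$ coming from Corollary~\ref{corollary.stationary} and~\eqref{equation.geometric}: one shows the denominators appearing are of the form $1 - \sum x_{a_i}$ over proper subsets of $A$ (never involving $x_\square$ in a way that vanishes), so the expression is analytic at $x_\square=0$, and then observes that setting $x_\square=0$ in the semaphore-code description of $\mathsf{NF}^{-1}(w\square)$ leaves precisely the paths realizing $\mathsf{NF}^{-1}(w)$ in $\mathsf{KR}(S,A)$ with a trailing $\square$ stripped off. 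Combined with the fact (from Section~\ref{section.markov}) that the recurrent random walk on $\mathsf{KR}(S,A)$ is the one supported on $K(\mathsf{KR}(S,A))$, this pins down the limit and completes the proof. I would also remark that, just as in Corollary~\ref{corollary.stationary}, the analogous formula for $\Psi^{(S,A)}_w$ follows by the further lumping of Corollary~\ref{corollary.stationary real}.
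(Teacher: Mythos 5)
Your overall route is the same as the paper's: identify $\mathsf{KR}(S\cup\{\square\},A\cup\{\square\})$ with $\left(\mathsf{KR}(S,A)\right)^\flat$, note that its minimal ideal $\{w\square \mid w\in S\}$ is left zero, apply Corollary~\ref{corollary.stationary} to get the semaphore-code sum for $\Psi^{\mathsf{KR}(S',A')}_{w\square}$, and then send $x_\square\to 0$. The paper's proof is in fact terser than yours and dispatches the limit in a single sentence, so your effort to explain why the limit exists and why it is a stationary vector of the unperturbed (possibly non-irreducible) chain is the right instinct.

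However, the specific justification you give for the existence of the limit is wrong. You claim the denominators in the rational expression for $\Psi^{\mathsf{KR}(S',A')}_{w\square}$ are of the form $1-\sum x_{a_i}$ over proper subsets of $A$ and ``never involve $x_\square$ in a way that vanishes,'' hence the expression is analytic at $x_\square=0$. The paper's own worked example immediately after the corollary contradicts this:
\[
	\Psi^{\mathsf{KR}(S',A')}_{a\square} \;=\; \frac{x_a\, x_\square}{1-(x_a+x_b)^2},
	\qquad 1-(x_a+x_b)^2 = 1-(1-x_\square)^2 = 2x_\square - x_\square^2,
\]
so the denominator does vanish at $x_\square=0$. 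The limit exists not because denominators stay bounded away from zero but because of a cancellation: every normal form of $(\mathsf{KR}(S,A))^\flat$ ends in $\square$, contributing exactly one factor $x_\square$ to the numerator, while exactly one Kleene star in $\mathsf{NF}^{-1}(w\square)$ --- the one attached to the terminal $\mathscr{R}$-class, which lies over $K(\mathsf{KR}(S,A))$ and whose only exit is the $\square$-edge --- produces a denominator factor of order $x_\square$; all earlier $\mathscr{R}$-classes on the path admit an exit edge labeled by a letter of $A$, so their loop weights stay bounded away from $1$ as $x_\square\to 0$ and contribute non-vanishing denominators. (One also needs that a path, once inside the lift of $K(S)$, never changes $\mathscr{R}$-class except via $\square$, which holds because $K(S)$ is simple.) If you replace your non-vanishing claim with this cancellation argument, your proof is correct and is actually more complete than the one in the paper, which asserts the limit without justification.
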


\begin{proof}
Consider $\mathsf{KR}(S', A')$ where $S'=S\cup \{\square\}$ and $A'=A\cup \{\square\}$ (or equivalently 
$\left(\mathsf{KR}(S,A)\right)^\flat$). The minimal ideal of this semigroup consists of all elements $w\square$, where 
$w\in S$. Since $\square v = \square$ for all $v \in S$, the minimal ideal is left zero. Hence 
Corollary~\ref{corollary.stationary} applies to $\mathcal{M}(\mathsf{KR}(S', A'))$ and
\begin{equation*}
	\Psi^{\mathsf{KR}(S',A')}_{w\square} = 
	\sum_{\substack{s \in \mathcal{S}(S',A')\\ 
	[s]_{\mathsf{KR}(S',A')}=w\square}} \Psi^{\mathcal{S}(S',A')}_s
	\qquad \text{for all $w\in S$.}
\end{equation*}
Taking the limit $x_\square$ to zero, the states with nonzero stationary probability will be in
$K(\mathsf{KR}(S,A))$ and we obtain~\eqref{equation.psi limit} for the stationary distribution.
\end{proof}

\begin{example}
Recall the semigroup $S=Z_2 \times \{0,1\}$ with generators $A=\{a,b\}$ with $a=(z,0)$ and $b=(z,1)$, where
$z$ is the generator of $Z_2$ with $z^2=1$, from Example~\ref{example.markov}.
If we add a generator $\square$ which acts as zero, then the normal forms of $\Gamma(S',A') = 
\mathsf{Mc} \circ \mathsf{KR}(S',A')$ with $S'=S \cup \{\square\}$ and $A'=A \cup \{\square\}$ are given by
\[
	\mathcal{N}(S',A') = \{\square, a \square,a^2 \square ,ab \square,b \square,
	b^2 \square, ba \square, ba^2 \square, bab \square, b^2a \square, b^2 a^2 \square, b^2 ab \square\}.
\]
We have 
\[
\begin{aligned}
	\mathsf{NF}^{-1}(\square) &= \square, & 
	\mathsf{NF}^{-1}(a\square) &= a \left( \{a,b\}^2 \right)^\star \square,\\
	\mathsf{NF}^{-1}(a^2 \square) &= a \left( \{a,b\}^2 \right)^\star a \square, &
	\mathsf{NF}^{-1}(ab \square) &= a \left( \{a,b\}^2 \right)^\star b \square,\\
	\mathsf{NF}^{-1}(b \square) &= b (b^2)^\star \square, &
	\mathsf{NF}^{-1}(b^2 \square) &= b (b^2)^\star b \square,\\
	\mathsf{NF}^{-1}(ba\square) &= b (b^2)^\star a \left( \{a,b\}^2 \right)^\star \square, &
	\mathsf{NF}^{-1}(ba^2\square) &= b (b^2)^\star a \left( \{a,b\}^2 \right)^\star a \square,\\
	\mathsf{NF}^{-1}(bab\square) &= b (b^2)^\star a \left( \{a,b\}^2 \right)^\star b \square, &
	\mathsf{NF}^{-1}(b^2 a\square) &= b (b^2)^\star ba \left( \{a,b\}^2 \right)^\star \square,\\
	\mathsf{NF}^{-1}(b^2 a^2\square) &= b (b^2)^\star ba \left( \{a,b\}^2 \right)^\star a \square, &
	\mathsf{NF}^{-1}(b^2 ab\square) &= b (b^2)^\star ba \left( \{a,b\}^2 \right)^\star b \square.
\end{aligned}
\]
Hence the stationary distribution for $\mathcal{M}(\mathsf{KR}(S',A'))$ is
\[
\begin{aligned}
	\Psi^{\mathsf{KR}(S',A')}_\square &= x_\square, & 
	\Psi^{\mathsf{KR}(S',A')}_{a\square} &= \frac{x_a x_\square}{1-(x_a+ x_b)^2},\\
	\Psi^{\mathsf{KR}(S',A')}_{a^2 \square} &= \frac{x_a (x_a+x_b) x_\square}{1-(x_a+x_b)^2}, &
	\Psi^{\mathsf{KR}(S',A')}_{b\square} &= \frac{x_b x_\square}{1-x_b^2},\\
	\Psi^{\mathsf{KR}(S',A')}_{b^2 \square} &= \frac{x_b^2 x_\square}{1-x_b^2}, &
	\Psi^{\mathsf{KR}(S',A')}_{ba\square} &= \frac{x_a x_b x_\square}{(1-x_b^2)(1-(x_a+x_b)^2)},\\
	\Psi^{\mathsf{KR}(S',A')}_{ba^2\square} &= \frac{x_a (x_a+x_b) x_b x_\square}{(1-x_b^2)(1-(x_a+x_b)^2)}, &
	\Psi^{\mathsf{KR}(S',A')}_{b^2 a\square} &= \frac{x_a x^2_b x_\square}{(1-x_b^2)(1-(x_a+x_b)^2)},\\
	\Psi^{\mathsf{KR}(S',A')}_{b^2 a^2\square} &= \frac{x_a (x_a+x_b) x^2_b x_\square}{(1-x_b^2)(1-(x_a+x_b)^2)}. &&
\end{aligned}
\]
Note that $\Psi^{\mathsf{KR}(S',A')}_{a\square}+\Psi^{\mathsf{KR}(S',A')}_{a^2\square} = x_a$, 
$\Psi^{\mathsf{KR}(S',A')}_{b \square} + \Psi^{\mathsf{KR}(S',A')}_{b^2\square} = \frac{x_\square x_b} {1-x_b}$, and
\[
	\Psi^{\mathsf{KR}(S',A')}_{ba\square} + \Psi^{\mathsf{KR}(S',A')}_{ba^2 \square} 
	+ \Psi^{\mathsf{KR}(S',A')}_{b^2a\square} + \Psi^{\mathsf{KR}(S',A')}_{b^2 a^2 \square} 
	= \frac{x_a x_b x_\square}{(1-x_b)(1-x_a-x_b)}.
\]
Hence the total sum of all stationary states is $x_\square + x_a + x_b =1$.

Taking the limit $x_\square \to 0$, we obtain the stationary distribution for $\mathcal{M}(\mathsf{KR}(S,A))$.
Note that using $x_\square + x_a + x_b =1$, we have $1-(x_a+x_b)^2 = 1-(1-x_\square)^2 = 2x_\square - x_\square^2$.
Hence
\[
\begin{aligned}
	\Psi^{\mathsf{KR}(S,A)}_{a} &= \frac{x_a}{2},& \qquad
	\Psi^{\mathsf{KR}(S,A)}_{a^2} &= \frac{x_a}{2},\\
	\Psi^{\mathsf{KR}(S,A)}_{b} &= 0,& \qquad
	\Psi^{\mathsf{KR}(S,A)}_{b^2} &= 0,\\
	\Psi^{\mathsf{KR}(S,A)}_{ba} &= \frac{x_a x_b}{2(1-x_b^2)},& \qquad
	\Psi^{\mathsf{KR}(S,A)}_{ba^2} &= \frac{x_a x_b}{2(1-x_b^2)},\\
	\Psi^{\mathsf{KR}(S,A)}_{b^2 a} &= \frac{x_a x^2_b}{2(1-x_b^2)},& \qquad
	\Psi^{\mathsf{KR}(S,A)}_{b^2a^2} &= \frac{x_a x^2_b}{2(1-x_b^2)}.
\end{aligned}
\]

We can lump further to $\mathcal{M}(S,A)$ by using that $a=b^2a=ba^2$ and
$a^2=ba=b^2a^2$ in $S$, so that
\[
	\Psi^{(S,A)}_a = \Psi_{a^2}^{(S,A)} = \frac{x_a}{2}\left(1+\frac{x_b^2}{1-x_b^2} + \frac{x_b}{1-x_b^2}\right)
	= \frac{x_a}{2}\left(1+\frac{x_b}{1-x_b}\right) = \frac{x_a}{2} + \frac{x_b}{2} = \frac{1}{2}.
\] 
\end{example}

%%%%%%%%%%%%%%%%%%%%%%%%%%%%%%%%%%%%%%%%%%%%%%%%%%%%%%%%
\subsection{Bounds on the mixing time}
\label{section.mixing time}

In this section we use the techniques developed in~\cite{ASST.2015} combined with the
normal forms coming from the McCammond expansion to provide an upper bound on the mixing time
for $\mathcal{M}(\mathsf{KR}(S,A))$ and $\mathcal{M}(S,A)$.

The total variation distance between two probability distributions $\nu$ and $\mu$ is defined by
\[
	\|\nu - \mu \|_{TV} = \max_{B \subseteq S} | \nu(B) - \mu(B)|,
\]
where $\nu(B) = \sum_{s \in B} \nu(s)$.
Let $\mathcal M$ be a finite state irreducible Markov chain with stationary distribution $\Psi$.
Let $d(k) = \sup_\nu \| \mathcal{T}^k \nu - \Psi \|$. Then, for $\varepsilon > 0$, the \defn{mixing time} of 
$\mathcal{M}$ is~\cite{LPW.2009}
\[
	t_{\mathrm{mix}}(\varepsilon) = \min \{k \mid d(k) \leqslant \varepsilon\}.
\]
Often authors choose $\varepsilon = e^{-1}$ or $\varepsilon = 1/4$ to define the mixing time. We bound
for $c > 0$, when $\| \mathcal{T}^k \nu - \Psi\| \leqslant e^{-c}$.

\begin{lemma} \cite[Lemma 3.6]{ASST.2015}
\label{lemma.statisticbound}
Let $\mathcal M$ be an irreducible Markov chain associated to the semigroup $S$ and probability distribution
$0\leqslant p(s) \leqslant 1$ for $s\in S$. We assume that $\{s \in S \mid p(s)>0\}$ generates $S$.
Let $\Psi$ be the stationary distribution and $f\colon S\to \mathbb N$ be a function, 
called a \defn{statistic}, such that:
\begin{enumerate}
\item $f(s's)\leqslant f(s)$ for all $s,s'\in S$;
\item if $f(s)>0$, then there exists $s' \in S$ with $p(s')>0$ such that $f(s's)<f(s)$;
\item $f(s)=0$ if and only if $s \in K(S)$.
\end{enumerate}
Then if $p=\min\{p(s) \mid s \in S, p(s)>0\}$ and $n=f(\mathbbm{1})$, we have that
\[
  \|\mathcal T^k\nu -\Psi\|_{TV} \leqslant \sum_{i=0}^{n-1} {k\choose i}p^i(1-p)^{k-i}
  \leqslant \exp\left(-\frac{(kp-(n-1))^2}{2kp}\right)\,,
  \]
for any probability distribution $\nu$ on $S$, where the last inequality holds as long as $k\geqslant (n-1)/p$.
\end{lemma}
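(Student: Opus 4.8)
The plan is to track a ``distance to the minimal ideal'' random variable and apply a standard coupon-collector/hitting-time argument. Concretely, I would run the Markov chain starting from an arbitrary initial distribution $\nu$ and record the sequence $a_1, a_2, \ldots$ of letters drawn (each $a_i \in S$ chosen with probability $p(a_i)$, independently). After $k$ steps the state is $a_k a_{k-1} \cdots a_1 \cdot s_0$ for the (random) starting point $s_0$. The key observation, using hypotheses (1) and (2), is that the statistic $f$ of the current state is nonincreasing in $k$, and each step has probability at least $p$ of strictly decreasing it \emph{provided} the decreasing letter $s'$ guaranteed by (2) is the one drawn. Since $f$ takes values in $\{0, 1, \ldots, n\}$ with $n = f(\mathbbm{1})$ and $f$ can only decrease, once $f$ hits $0$ we are in $K(S)$ by (3), and by the results quoted earlier in the excerpt (the recurrent states are exactly $K(S)$, the restriction to a minimal left ideal is irreducible with stationary distribution $\Psi$) the chain has ``forgotten'' its start.

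First I would make the coupling precise: let $E_i$ be the event that at step $i$, conditioned on $f$ of the current state still being positive, the letter drawn is one that strictly decreases $f$. By hypothesis (2) such a letter exists and has probability $\geqslant p$, so $\Pr(E_i \mid \mathcal{F}_{i-1}) \geqslant p$ regardless of history. If at least $n$ of the events $E_1, \ldots, E_k$ occur, then $f$ has dropped from at most $n$ down to $0$, so the current state lies in $K(S)$. Hence the probability that $f$ has \emph{not} yet reached $0$ after $k$ steps is bounded by the probability that fewer than $n$ successes occur in $k$ independent-ish trials each of success probability $\geqslant p$, i.e. by $\sum_{i=0}^{n-1} \binom{k}{i} p^i (1-p)^{k-i}$. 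Then I would invoke the standard fact that conditioned on having entered $K(S)$ the distribution of the state is governed by $\Psi$ (the chain on a minimal left ideal being irreducible and the walk being independent of the chosen left ideal), so the total variation distance to $\Psi$ is bounded by the ``not yet absorbed'' probability; this gives $\|\mathcal{T}^k \nu - \Psi\|_{TV} \leqslant \sum_{i=0}^{n-1} \binom{k}{i} p^i (1-p)^{k-i}$.

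For the second inequality I would simply apply a Chernoff/Hoeffding bound for the lower tail of a binomial (or a Binomial-dominated) random variable: if $X \sim \mathrm{Bin}(k, p)$ then $\Pr(X \leqslant n-1) = \Pr(X \leqslant kp - (kp - (n-1))) \leqslant \exp\!\left(-\frac{(kp-(n-1))^2}{2kp}\right)$ whenever $kp \geqslant n-1$, which is the classical Chernoff bound for deviations below the mean. Since our count of successes stochastically dominates such a binomial (each trial has success probability at least $p$), the same tail bound applies, giving the stated estimate, valid for $k \geqslant (n-1)/p$.

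The main obstacle I expect is making rigorous the claim that ``reaching $K(S)$'' really does imply the chain has mixed to $\Psi$ — that is, justifying that once the letter-product lands in the minimal ideal, its conditional distribution is independent of the starting state $s_0$ and coincides with (a projection of) $\Psi$. This requires care because the letters drawn after absorption continue to act, and one must argue that the stationary distribution $\Psi$ is supported on (and determined by the dynamics within) a single minimal left ideal, using the structural facts recalled earlier: $K(S)$ is a disjoint union of minimal left ideals, the faithful action of $S$ on $\Omega$ is isomorphic to the action on $K(S)$, and the restricted walk on any minimal left ideal is irreducible with the walk being independent of the chosen ideal. A clean way to handle this is to couple two copies of the chain with the same letter sequence but different starting states: once \emph{both} have $f = 0$ they agree (because on $K(S)$, which is left zero in the reduced setting or handled via the structure theory in general, the product $a_k \cdots a_j$ for the first post-absorption index $j$ already determines the state), and the coalescence time is exactly the first time $n$ successes have accumulated, which is what the binomial bound controls. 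I would present the argument via this coupling, as it simultaneously yields the total variation bound and sidesteps having to compute $\Psi$ explicitly.
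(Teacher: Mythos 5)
The paper does not actually prove this lemma: it is imported verbatim from \cite[Lemma 3.6]{ASST.2015}, so there is no in-paper argument to compare against, and you are reconstructing the external proof. Your overall strategy --- a monotone statistic along the letter sequence, stochastic domination of the number of strict decreases by a $\mathrm{Bin}(k,p)$ variable, coalescence of a coupling upon reaching $K(S)$, and a lower-tail Chernoff bound --- is the right one and matches the cited source. The first and last ingredients are carried out correctly: hypothesis (1) applied with $s=\mathbbm{1}$ gives $f(s')\leqslant f(\mathbbm{1})=n$ for every $s'$, so an arbitrary starting distribution is fine; the conditional success probability $\geqslant p$ at each step (given any history on which $f>0$) gives the stated binomial tail by standard domination; and the Chernoff computation is the usual one.

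The genuine gap is precisely the step you flag and then defer to ``structure theory'': the claim that once the state lies in $K(S)$ the chain has forgotten its start. For your coupling $(w_k x_0,\, w_k y_0)$ with common letter product $w_k=a_k\cdots a_1$ to coalesce, you need every element of $K(S)$ to act on the state space as a \emph{constant} map. This holds when $K(S)$ is left zero (the regime in which the present paper actually invokes the lemma, after the lumping and flat reductions), but it fails for general $S$: in Rees coordinates the product $wx$ for $w\in K(S)$ still depends on $x$ whenever the structure group of $K(S)$ is nontrivial, so the two copies need never meet. Worse, the statement itself secretly needs such a hypothesis: if $S$ is a nontrivial finite group then $K(S)=S$, so $f\equiv 0$ and $n=0$, and the claimed bound would force $\|\mathcal{T}^k\nu-\Psi\|_{TV}=0$ for all $k$, which is false for the periodic walk on $Z_2$ driven by the non-identity generator. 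Likewise, there is no ``standard fact'' that conditioning on having entered $K(S)$ yields $\Psi$; that assertion is exactly what must be proved and is false in general. To close the argument you must either restrict to the case where $K(S)$ acts by constants (left zero suffices, and is what the cited source has in its $\mathscr{R}$-trivial setting) and then run your coupling, or replace coalescence by an honest strong stationary time argument; as written, the proof is complete only in the left-zero case.
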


Now consider $\mathcal{M}(S,A)$ for an $A$-semigroup $(S,A)$ with probabilities $0<x_a \leqslant 1$ on the 
generators. Let $n$ be the maximal length of a chain from $\mathbbm{1}$ to a leaf in the
McCammond expansion $\mathsf{Mc} \circ \mathsf{KR}(S,A)$. Let $\ell$ be the maximal distance between two 
transition edges in $\mathsf{Mc} \circ \mathsf{KR}(S,A)$ (which is well-defined since $\mathsf{Mc} \circ \mathsf{KR}(S,A)$
is a tree). Define the statistic $f(s)$ to be the maximum of the number of transition 
arrows in the unique path from $v$ to a leaf in $\mathsf{Mc} \circ \mathsf{KR}(S,A)$, where $v$ runs over all vertices
in $\mathsf{Mc} \circ \mathsf{KR}(S,A)$ such that $[v]_{\mathsf{KR}(S,A)}=s$. Note that this statistic is constant
on $\mathscr{R}$-classes. Furthermore, it satisfies Conditions~(1) and~(3) of Lemma~\ref{lemma.statisticbound}.
Condition~(2) is not necessarily satisfied. However, if we take $\ell$ steps at a time in the original Markov
chain $\mathcal{M}(S,A)$, then with probability at least $p^\ell>0$, where $p = \min\{x_a \mid a\in A\}$, the statistic
strictly decreases. Hence, in this random walk, where we take $\ell$ steps at a time as compared to $\mathcal{M}(S,A)$, 
Lemma~\ref{lemma.statisticbound} applies and we obtain
\[
  \|\mathcal T^k\nu -\Psi\|_{TV}  \leqslant \exp\left(-\frac{(kp^\ell/\ell-(n-1)/\ell)^2}{2kp^\ell/\ell}\right)
  = \exp\left(-\frac{(kp^\ell-(n-1))^2}{2k\ell p^\ell}\right)\;.
\]
This shows that the mixing time is at most $2(n+\ell c-1)/p^\ell$ (see also~\cite[Section 6]{ASS.2014}).

%%%%%%%%%%%%%%%%%%%%%%%%%%%%%%%%%%%%%%%%%%%%%%%%%%%%%%%%%%%%%%
\section{Examples}
\label{section.examples}
%%%%%%%%%%%%%%%%%%%%%%%%%%%%%%%%%%%%%%%%%%%%%%%%%%%%%%%%%%%%%%

In this section we derive explicit Markov chains from the general theory of Section~\ref{section.walks} for
specific choices of $A$-semigroups $(S,A)$. We begin by treating known examples, such as
the Tsetlin library, in this setting and then move on to new examples.

%%%%%%%%%%%%%%%%%%%%%%%%%%%%%%%%%%%%%%%%%%%%%%%%%%%%%%%%
\subsection{The Tsetlin library}
\label{section.Tsetlin library}

The Tsetlin library~\cite{Tsetlin.1963} is a Markov chain whose states are all permutations $S_n$ of $n$ books (on a shelf).
Given $\pi \in S_n$, construct $\pi' \in S_n$ from $\pi$ by removing book $a$ from the shelf and inserting 
it to the front. In this case write $\pi \stackrel{a}{\longrightarrow} \pi'$. 
Let $0< x_a\leqslant 1$ be probabilities for each $1\leqslant a \leqslant n$ such that $\sum_{a=1}^n x_a = 1$. 
In the Tsetlin library Markov chain, we transition $\pi \stackrel{a}{\longrightarrow} \pi'$ with probability $x_a$. 
The stationary distribution for the Tsetlin library was derived by Hendricks~\cite{Hendricks.1972, Hendricks.1973}
\begin{equation}
\label{equation.psi Tsetlin}
	\Psi_\pi = \prod_{i=1}^{n} \frac{x_{\pi_{i}}}{x_{\pi_{i+1}}+ \cdots + x_{\pi_n}} \qquad \text{for all $\pi \in S_n$.}
\end{equation}
We are now going to derive the stationary distribution using the methods developed in Section~\ref{section.walks}.

Consider the semigroup $P(n)$, which consists of the set of all non-empty subsets of $\{1,2,\ldots,n\}$. Multiplication in 
$P(n)$ is union of sets. We pick as generators $A=[n]:=\{1,2,\ldots,n\}$. Then the right Cayley graph $\mathsf{RCay}(P(n),[n])$ 
is the Boolean poset with $\mathbbm{1}$ as root. The right Cayley graph for $P(3)$ is depicted in Figure~\ref{figure.P3}.
Except for the loops at a given vertex, all edges are transitional. Hence $\Gamma(P(n),[n])= 
\mathsf{Mc} \circ \mathsf{KR}(P(n),[n]) = \mathsf{KR}(P(n),[n])$ is a tree with leaves given by the permutations $S_n$ of $[n]$.
The case $n=3$ is given in Figure~\ref{figure.Mc P3}. The minimal ideal is $K(P(n)) = [n]$ and 
\[
	\mathcal{N}(P(n),[n]) = S_n.
\]
Let $\pi \in S_n$ be a permutation. Then $\mathsf{NF}^{-1}(\pi)$ consists of all paths in $\Gamma(P(n),[n])$
starting at $\mathbbm{1}$ and ending at $\pi=\pi_1\pi_2 \ldots \pi_n$. Such a path has to pass through the vertex
$\pi_1 \ldots \pi_i$ for each $1\leqslant i\leqslant n$. At a given vertex $\pi_1 \ldots \pi_i$, it can loop
with $\pi_1,\ldots,\pi_i$. Hence we can write the paths to $\pi$ via Kleene expressions as
\[
	\mathsf{NF}^{-1}(\pi) = \pi_1 \pi_1^\star \pi_2 \{\pi_1,\pi_2\}^\star \cdots \pi_i \{\pi_1,\ldots,\pi_i\}^\star \cdots \pi_n.
\]
Using the Zimin words~\eqref{equation.zimin}, this expression can be written entirely in terms of star and
multiplication without the sets.

The states of the Markov chain $\mathcal{M}^{\mathcal{S}(P(n),[n])}$ consist of all words in the alphabet $[n]$ that end
once all letters in $[n]$ are used. The state space of the lumped Markov chain $\mathcal{M}(\mathsf{KR}(P(n),[n])$ 
is $S_n$. We transition $\pi \stackrel{a}{\longrightarrow} \pi'$ with probability $x_a$, where $\pi'$ is obtained from $\pi$ by 
prepending $a$ to $\pi$ and removing the letter $a$ from $\pi$. Equivalently, this corresponds to moving the letter $a$ in 
$\pi$ to the front, which is exactly the transition in the Tsetlin library.

By~\eqref{equation.Psi NF stable}, the stationary distribution associated to $\pi \in S_n$ is
\[
	\Psi^{\mathsf{KR}(P(n),[n])}_\pi = \sum_{s \in \mathsf{NF}^{-1}(\pi)} \; \prod_{a\in s} x_a.
\]
Using~\eqref{equation.geometric}, this can be rewritten as
\[
	\Psi^{\mathsf{KR}(P(n),[n])}_\pi = \prod_{i=1}^n \frac{x_{\pi_i}}{1-\sum_{j=1}^{i-1} x_{\pi_j}}
	= \prod_{i=1}^n \frac{x_{\pi_i}}{\sum_{j=i+1}^n x_{\pi_j}},
\]
where we used that $\sum_{j=1}^n x_{\pi_j} = 1$. This agrees with~\eqref{equation.psi Tsetlin}.

\begin{figure}[t]
\begin{tikzpicture}[auto]
\node (I) at (0, 0) {$\mathbbm{1}$};
\node (A) at (-3,-1.5) {$\{1\}$};
\node(B) at (0,-1.5) {$\{2\}$};
\node(C) at (3,-1.5) {$\{3\}$};
\node(D) at (-3,-3) {$\{1,2\}$};
\node(E) at (0,-3) {$\{1,3\}$};
\node(F) at (3,-3) {$\{2,3\}$};
\node(G) at (0,-4.5) {$\{1,2,3\}$};

\draw[edge,blue,thick] (I) -- (A) node[midway, left] {$1$\;};
\draw[edge,blue,thick] (I) -- (B) node[midway, right] {$2$};
\draw[edge,blue,thick] (I) -- (C) node[midway, right] {\;$3$};
\draw[edge,blue,thick] (A) -- (D) node[midway,left] {$2$};
\draw[edge,blue,thick] (A) -- (E) node[midway,above] {$3$};
\draw[edge,blue,thick] (B) -- (D) node[midway,below] {$1$};
\draw[edge,blue,thick] (C) -- (F) node[midway,right] {$2$};
\draw[edge,blue,thick] (B) -- (F) node[midway,below] {$3$};
\draw[edge,blue,thick] (C) -- (E) node[midway,above] {$1$};
\draw[edge,blue,thick] (D) -- (G) node[midway,below] {$3$};
\draw[edge,blue,thick] (E) -- (G) node[midway,right] {$2$};
\draw[edge,blue,thick] (F) -- (G) node[midway,right] {\;$1$};
\path
	(A) edge [loop left] node {$1$} (A)
	(B) edge [loop right] node {$2$} (B)
	(C) edge [loop right] node {$3$} (C)
	(D) edge [loop left] node {$1,2$} (D)
	(E) edge [loop right] node {$1,3$} (E)
	(F) edge [loop right] node {$2,3$} (F)
	(G) edge [loop right] node {$1,2,3$} (G);
\end{tikzpicture}
\caption{The right Cayley graph $\mathsf{RCay}(S,A)$ with $S=P(3)$ and $A=\{1,2,3\}$. Transition edges are drawn in blue.
\label{figure.P3}}
\end{figure}

\begin{figure}[t]
\begin{tikzpicture}[auto]
\node (I) at (0, 0) {$\mathbbm{1}$};
\node (A) at (-4,-1.5) {$1$};
\node(B) at (0,-1.5) {$2$};
\node(C) at (4,-1.5) {$3$};
\node(D) at (-5,-3) {$12$};
\node(E) at (-3,-3) {$13$};
\node(F) at (-1,-3) {$21$};
\node(G) at (1,-3) {$23$};
\node(H) at (3,-3) {$31$};
\node(J) at (5,-3) {$32$};
\node(DD) at (-5,-4.5) {$123$};
\node(EE) at (-3,-4.5) {$132$};
\node(FF) at (-1,-4.5) {$213$};
\node(GG) at (1,-4.5) {$231$};
\node(HH) at (3,-4.5) {$312$};
\node(JJ) at (5,-4.5) {$321$};

\draw[edge,blue,thick] (I) -- (A) node[midway, left] {$1$\;\;};
\draw[edge,blue,thick] (I) -- (B) node[midway, right] {$2$};
\draw[edge,blue,thick] (I) -- (C) node[midway, right] {\;\;$3$};
\draw[edge,blue,thick] (A) -- (D) node[midway, left] {$2$};
\draw[edge,blue,thick] (A) -- (E) node[midway, right] {$3$};
\draw[edge,blue,thick] (B) -- (F) node[midway, left] {$1$};
\draw[edge,blue,thick] (B) -- (G) node[midway, right] {$3$};
\draw[edge,blue,thick] (C) -- (H) node[midway, left] {$1$};
\draw[edge,blue,thick] (C) -- (J) node[midway, right] {$3$};
\draw[edge,blue,thick] (D) -- (DD) node[midway, left] {$3$};
\draw[edge,blue,thick] (E) -- (EE) node[midway, left] {$2$};
\draw[edge,blue,thick] (F) -- (FF) node[midway, left] {$3$};
\draw[edge,blue,thick] (G) -- (GG) node[midway, right] {$1$};
\draw[edge,blue,thick] (H) -- (HH) node[midway, right] {$2$};
\draw[edge,blue,thick] (J) -- (JJ) node[midway, right] {$1$};

\path
	(A) edge [loop left, red, dashed,thick] node {$1$} (A)
	(B) edge [loop right, red, dashed, thick] node {$2$} (B)
	(C) edge [loop right, red, dashed,thick] node {$3$} (C)
	(D) edge [loop left, red, dashed,thick] node {$1,2$} (D)
	(E) edge [loop left, red, dashed,thick] node {$1,3$} (E)
	(F) edge [loop left, red, dashed,thick] node {$1,2$} (F)
	(G) edge [loop right, red, dashed,thick] node {$2,3$} (G)
	(H) edge [loop right, red, dashed,thick] node {$1,3$} (H)
	(J) edge [loop right, red, dashed,thick] node {$2,3$} (J)
	(DD) edge [loop below, red, dashed,thick] node {$1,2,3$} (DD)
	(EE) edge [loop below, red, dashed,thick] node {$1,2,3$} (EE)
	(FF) edge [loop below, red, dashed,thick] node {$1,2,3$} (FF)
	(GG) edge [loop below, red, dashed,thick] node {$1,2,3$} (GG)
	(HH) edge [loop below, red, dashed,thick] node {$1,2,3$} (HH)
	(JJ) edge [loop below, red, dashed,thick] node {$1,2,3$} (JJ);
\end{tikzpicture}
\caption{$\Gamma(P(3),[3])=\mathsf{Mc} \circ \mathsf{KR}(P(3),[3]) = \mathsf{KR}(P(3),[3])$, which is the 
Karnofsky--Rhodes expansion of the right Cayley graph of Figure~\ref{figure.P3}.
\label{figure.Mc P3}}
\end{figure}

%%%%%%%%%%%%%%%%%%%%%%%%%%%%%%%%%%%%%%%%%%%%%%%%%%%%%%%%
\subsection{Edge flipping on a line}
\label{section.edge flipping}

The example of this section is a Markov chain obtained by edge flipping on a line and was suggested to us 
by Persi Diaconis. It is a Boolean arrangement~\cite{BHR.1999} for which stationary distributions were derived
in~\cite{BD.1998} and which was also analyzed in~\cite{ChungGraham.2012}. Take a line with $n+1$ vertices.
Each vertex can either be $0$ or $1$. So the state space is $\mathcal{S}=\{0,1\}^{n+1}$ of size $2^{n+1}$. 
Pick edge $i$ for $1\leqslant i \leqslant n$ (between vertices $i$ and $i+1$) with probability $x_i$. Then with 
probability $\frac{1}{2}$ make the adjacent vertices both 0 (respectively both 1). Let us call this Markov chain
$\mathcal{M}$.

In our setting, this Markov chain can be treated in a similar fashion to the Tsetlin library.
Let $P^\pm(n)$ be the set of signed subsets of $[n]$, that is, take a subset of $[n]$
and in addition associate to each letter a sign $+$ or $-$. Right multiplication of such a subset 
$X$ by a generator $x \in [\pm n]:=\{\pm 1,\ldots,\pm n\}$ is addition of $x$ to $X$ if neither $x$ nor $-x$ are in $X$
and otherwise return $X$. The minimal ideal in the Karnofsky--Rhodes expansion of this monoid is
the set of signed permutations $S_n^\pm$. Here signed permutations are represented in one-line notation
$\pi_1 \pi_2 \ldots \pi_n$, where $|\pi_1| \ldots |\pi_n|$ is a permutation and $\pi_i \in [\pm n]$
for each $1\leqslant i\leqslant n$.

The Kleene expression for $\pi \in S_n^\pm$ is very similar to the case of the Tsetlin library
\[
	\mathsf{NF}^{-1}(\pi) = \pi_1 \{\pm \pi_1\}^\star \pi_2 \{\pm \pi_1,\pm \pi_2\}^\star \cdots \pi_i 
	\{\pm \pi_1,\ldots,\pm \pi_i\}^\star \cdots \pi_n.
\]
The state space of the lumped Markov chain $\mathcal{M}(\mathsf{KR}(P^\pm(n),[\pm n]))$ 
is $S^\pm_n$. We transition $\pi \stackrel{a}{\longrightarrow} \pi'$ with probability $y_a$ for $a\in [\pm n]$, 
where $\pi'$ is obtained from $\pi$ by prepending $a$ to $\pi$ and removing the letter $a$ or $-a$ from $\pi$. 

By~\eqref{equation.Psi NF stable}, the stationary distribution associated to $\pi \in S^\pm_n$ is
\[
	\Psi^{\mathsf{KR}(P^\pm(n),[\pm n])}_\pi = \sum_{s \in \mathsf{NF}^{-1}(\pi)} \; \prod_{a\in s} y_a.
\]
Using~\eqref{equation.geometric}, this can be rewritten as
\[
	\Psi^{\mathsf{KR}(P^\pm(n),[\pm n])}_\pi = \prod_{i=1}^n \frac{y_{\pi_i}}{1-\sum_{j=1}^{i-1} (y_{\pi_j} + y_{-\pi_j})}.
\]

The edge flipping Markov chain $\mathcal{M}$ can be obtained from $\mathcal{M}(\mathsf{KR}(P^\pm(n),[\pm n]))$
via the action of $P^\pm(n)$ on $\mathcal{S}=\{0,1\}^{n+1}$. For $s\in \mathcal{S}$, the letter $a \in [n]$ acts
on $s=s_1\ldots s_{n+1}$ by changing $s_a$ and $s_{a+1}$ to 0 and $-a$ acts by changing $s_a$ and $s_{a+1}$ to 1.
A signed permutation $\pi \in S_n^\pm$ can be associated with a state $s$ since $s:=\pi.s'$ is independent of 
$s'\in \mathcal{S}$ (since every letter appears once in $\pi$).
Hence setting $y_a=y_{-a}=\frac{x_a}{2}$, we obtain the stationary distribution for $s$ in $\mathcal{M}$
by lumping
\begin{equation}
\label{equation.flipping stationary}
	\Psi_s^\mathcal{M} = \sum_{\substack{\pi \in S_n^\pm\\ s=\pi.0^{n+1}}} \Psi^{\mathsf{KR}(P^\pm(n),[\pm n])}_\pi
	= \frac{1}{2^n}\sum_{\substack{\pi \in S_n^\pm\\ s=\pi.0^{n+1}}} \prod_{i=1}^n \frac{x_i}{1-\sum_{j=1}^{i-1} x_{|\pi_j|}}.
\end{equation}
Note that more generally one could set $y_a = px_a$ and $y_{-a}=(1-p)x_a$ for $0<p<1$. The case above is $p=\frac{1}{2}$.
Formula~\eqref{equation.flipping stationary} has a similar structure as the stationary distributions 
in~\cite[Theorem 2]{BD.1998} and~\cite[Eq. (2.5)]{Denham.2012}.

\begin{example}
For $n=2$, we have
\begin{equation*}
\begin{split}
	\Psi_{12} &= \Psi_{-12} = \Psi_{1-2} = \Psi_{-1-2} = \frac{x_1 x_2}{4(1-x_1)} = \frac{x_1}{4},\\
	\Psi_{21} &= \Psi_{-21} = \Psi_{2-1} = \Psi_{-2-1} = \frac{x_1 x_2}{4(1-x_2)} = \frac{x_2}{4},
\end{split}
\end{equation*}
where we dropped the superscripts. Hence
\begin{equation*}
\begin{split}
	\Psi_{000} &= \Psi_{111} = \Psi_{12} + \Psi_{21} = \frac{x_1+x_2}{4} = \frac{1}{4},\\
	\Psi_{001} &= \Psi_{110} = \Psi_{1-2} = \frac{x_1}{4},\\
	\Psi_{010} &= \Psi_{101} = 0,\\
	\Psi_{011} &= \Psi_{100} = \Psi_{-21} = \frac{x_2}{4}.
\end{split}
\end{equation*}
\end{example}

\begin{example}
For $n=3$, we have for example
\begin{equation*}
\begin{split}
	\Psi_{0010} &= \Psi_{1-23} = \frac{x_1 x_2}{8(x_2+x_3)},\\
	\Psi_{0001} &= \Psi_{12-3} + \Psi_{21-3} + \Psi_{2-31}
	= \frac{x_1 x_2 x_3}{8}\left( \frac{1}{x_3(x_2+x_3)} + \frac{1}{x_3(x_1+x_3)} + \frac{1}{x_1(x_1+x_3)} \right).
\end{split}
\end{equation*}
The most likely states are $0000$ and $1111$ with
\[
	\Psi_{0000} = \Psi_{1111} = \sum_{\pi \in S_3} \Psi_\pi + \Psi_{13-2} +\Psi_{31-2}=
	\frac{1}{8} \left( 1 + \frac{x_1 x_3}{1-x_1} + \frac{x_1 x_3}{1-x_3} \right).
\]
\end{example}

In general, the most likely states are $0^{n+1}$ and $1^{n+1}$ since the largest number of summands
contribute in~\eqref{equation.flipping stationary} (see also the important paper~\cite[Section 9]{ChungGraham.2012}).
In particular, all permutations in $S_n$ contribute and since these terms are exactly the stationary distributions of the Tsetlin
library, they sum to one. Hence $\Psi_{0^{n+1}}^{\mathcal{M}} \sim \frac{1}{2^n}$ plus lower order terms in the limit
$n\to \infty$. The states $0101\ldots$ and $1010\ldots$ appear with probability zero and are hence the least likely.
One of the second least likely states is $001010\ldots$. For this state, only one summand 
in~\eqref{equation.flipping stationary} contributes and $\Psi_{001010\ldots}^{\mathcal{M}} \sim \frac{1}{2^n n!}$.

%%%%%%%%%%%%%%%%%%%%%%%%%%%%%%%%%%%%%%%%%%%%%%%%%%%%%%%%
\subsection{Cyclic walks -- Rees matrix semigroup $B(n)$}
\label{section.Bn}

The Rees matrix semigroup $B(n)$ consists of the elements $\{ij \mid 1\leqslant i,j \leqslant n\} \cup \{\square\}$ 
with multiplication
\[
	ij \cdot k\ell = \begin{cases}
	i\ell & \text{if $j=k$,}\\
	\square & \text{otherwise,}
	\end{cases}
\]
and $\square$ acts as zero. Let us choose as generators $A = \{a_i \mid 1\leqslant i \leqslant n\}$, where $a_i = i(i+1)$ 
for  $1\leqslant i < n$ and $a_n = n1$. 

\begin{example}
\label{example.B2}
Let us consider the special case of $S=B(2)$ with generators $A=\{a,b\}$, where $a=12$ and $b=21$. The right Cayley graph 
and its Karnofsky--Rhodes/McCammond expansion $\mathsf{Mc} \circ \mathsf{KR}(S,A)$ are given in Figure~\ref{figure.B2}. 
Note that in this example $\mathsf{KR}(S,A)$ is stable under the McCammond expansion. The minimal ideal
$K(S)=\{\square\}$ and the normal forms are given by
\[
	\mathcal{N}(B(2),A) = \{aa,abb,baa,bb\}.
\]
The Markov chain $\mathcal{M}(\mathsf{KR}(B(2),A))$ is depicted in Figure~\ref{figure.markov chain B2}.
To compute the stationary distribution, we first obtain
\begin{equation*}
\begin{split}
	\mathsf{NF}^{-1}(aa) = a (ba)^\star a, \qquad \qquad &\mathsf{NF}^{-1}(abb) = ab (ab)^\star b,\\
	\mathsf{NF}^{-1}(bb) = b (ab)^\star b, \qquad \qquad & \mathsf{NF}^{-1}(baa) = ba (ba)^\star a.
\end{split}
\end{equation*}
By~\eqref{equation.Psi NF stable}, we obtain the stationary distribution
\begin{equation*}
\begin{split}
	\Psi^{\mathsf{KR}(S,A)}_{aa} = \frac{x_a^2}{1-x_a x_b}, \qquad \qquad 
	& \Psi^{\mathsf{KR}(S,A)}_{abb} = \frac{x_a x_b^2}{1-x_a x_b},\\
	\Psi^{\mathsf{KR}(S,A)}_{bb} = \frac{x_b^2}{1-x_a x_b}, \qquad \qquad 
	& \Psi^{\mathsf{KR}(S,A)}_{baa} = \frac{x_a^2 x_b}{1-x_a x_b}.
\end{split}
\end{equation*}
We indeed verify, using $x_a+x_b=1$, that
\begin{multline*}
	\Psi^{\mathsf{KR}(S,A)}_{aa}+\Psi^{\mathsf{KR}(S,A)}_{bb}+\Psi^{\mathsf{KR}(S,A)}_{abb}
	+\Psi^{\mathsf{KR}(S,A)}_{baa} 
	= \frac{1}{1-x_a x_b}(x_a^2 + x_b^2 + x_a x_b^2 + x_a^2 x_b)\\
	= \frac{1}{1-x_a x_b}( x_a^2 + x_b^2 + x_a x_b)
	= \frac{1}{1-x_a x_b}((x_a+x_b)^2 - x_a x_b) = 1.
\end{multline*}
\end{example}

\begin{figure}[t]
\begin{center}
\begin{tikzpicture}
\node (A) at (0, 0) {$\mathbbm{1}$};
\node (B) at (-1.5,-1) {$a$};
\node(C) at (1.5,-1) {$b$};
\node(D) at (-1.2,-2.5) {$ab$};
\node(E) at (1.2,-2.5) {$ba$};
\node(F) at (0,-4) {$\square$};
\draw[edge,blue,thick] (A) -- (B) node[midway, above] {$a$};
\draw[edge,blue,thick] (A) -- (C) node[midway, above] {$b$};
\path (B) edge[->,thick, bend right = 40] node[midway,left] {$b$} (D);
\path (D) edge[->,thick, bend right = 40] node[midway,right] {$a$} (B);
\path (C) edge[->,thick, bend left = 40] node[midway,right] {$a$} (E);
\path (E) edge[->,thick, bend left = 40] node[midway,left] {$b$} (C);
\draw[edge,blue,thick] (D) -- (F) node[midway, above] {$b$};
\draw[edge,blue,thick] (E) -- (F) node[midway, above] {$a$};
\path (B) edge[->,thick, bend right = 120, blue] node[midway,left] {$a$} (F);
\path (C) edge[->,thick, bend left = 120, blue] node[midway,right] {$b$} (F);
\end{tikzpicture}
\raisebox{1cm}{
\begin{tikzpicture}
\node (A) at (0, 0) {$\mathbbm{1}$};
\node (B) at (-1.5,-1) {$a$};
\node(C) at (1.5,-1) {$b$};
\node(D) at (-1.2,-2.5) {$ab$};
\node(E) at (1.2,-2.5) {$ba$};
\node(F) at (-3,-4) {$aa$};
\node(G) at (-1.2,-4) {$abb$};
\node(H) at (1.2,-4) {$baa$};
\node(I) at (3,-4) {$bb$};
\draw[edge,blue,thick] (A) -- (B) node[midway, above] {$a$};
\draw[edge,blue,thick] (A) -- (C) node[midway, above] {$b$};
\path (B) edge[->,thick, bend right = 40] node[midway,left] {$b$} (D);
\path (D) edge[->,thick, bend right = 40, red, dashed] node[midway,right] {$a$} (B);
\path (C) edge[->,thick, bend left = 40] node[midway,right] {$a$} (E);
\path (E) edge[->,thick, bend left = 40, red, dashed] node[midway,left] {$b$} (C);
\draw[edge,blue,thick] (D) -- (G) node[midway, left] {$b$};
\draw[edge,blue,thick] (E) -- (H) node[midway, right] {$a$};
\path (B) edge[->,thick, bend right = 40, blue] node[midway,left] {$a$} (F);
\path (C) edge[->,thick, bend left = 40, blue] node[midway,right] {$b$} (I);
\end{tikzpicture}}
\end{center}
\caption{\label{figure.B2} \textbf{Left:} The right Cayley graph $\mathsf{RCay}(B(2),\{a,b\})$ with generators
$a=12$ and $b=21$. \textbf{Right:} $\mathsf{Mc}\circ \mathsf{KR}(B(2),\{a,b\}) = \mathsf{KR}(B(2),\{a,b\})$.
Transition edges are indicated in blue.}
\end{figure}

\begin{figure}
\begin{tikzpicture}[->,>=stealth',shorten >=1pt,auto,node distance=2.8cm,
                    semithick]
  \tikzstyle{every state}=[fill=red,draw=none,text=white]

  \node[state]         (A) {$aa$};
  \node[state]         (B) [right of=A] {$baa$};
  \node[state]         (C) [below of =B] {$bb$};
  \node[state]         (D) [below of =A] {$abb$};

  \path (A) edge [loop left] node {$a$} (A)
                 edge [bend left] node {$b$} (B)
           (B) edge node {$b$} (C)
                 edge [bend left] node {$a$} (A)
           (C) edge [loop right] node {$b$} (C)
                 edge [bend left] node {$a$} (D)
           (D) edge node {$a$} (A)
                 edge [bend left] node {$b$} (C);                  
\end{tikzpicture}
\caption{\label{figure.markov chain B2}
Markov chain $\mathcal{M}(\mathsf{KR}(B(2),A))$ of Example~\ref{example.B2}.}
\end{figure}

In general, $\mathsf{RCay}(B(n),A)$ contains a cycle of the form
\[
\begin{tikzpicture}[auto]
\node (A) at (0, 0) {$\mathbbm{1}$};
\node (B) at (0,-1.5) {$a_1$};
\node (C) at (0,-3) {$a_1 a_2$};
\node (D) at (0,-4.5) {$\vdots$};
\node (E) at (0,-6) {$a_1 a_2 \cdots a_n$};
\draw[edge,blue,thick] (A) -- (B) node[midway, left] {$a_1$};
\draw[edge,thick] (B) -- (C) node[midway, left] {$a_2$};
\draw[edge,thick] (C) -- (D) node[midway, left] {$a_3$};
\draw[edge,thick] (D) -- (E) node[midway, left] {$a_n$};
\path (E) edge[->,thick, bend right = 40] node[midway,right] {$a_1$} (B);
\end{tikzpicture}
\]
For the vertex $a_1 a_2 \cdots a_{i-1}$, right multiplication by $b$ with $b\neq a_i$ yields $\square$.
There are similar cycles, where each subindex $j$ is replaced by $k+j$ modulo $n$ for a given $0\leqslant k<n$.
Hence the elements in $\mathcal{N}(B(n), A)$ are of the form
\[
	y_{j,i}^k := a_{k+1} a_{k+2} \cdots a_{k+j} a_i, \qquad \qquad
	(0\leqslant k <n, 1\leqslant j \leqslant n)
\]
where all indices are considered modulo $n$ and $i \neq k+j+1$. Note that the McCammond expansion of the
Karnofsky--Rhodes expansion is again stable. 

We have
\[
	\mathsf{NF}^{-1}(y_{j,i}^k) : = a_{k+1} (a_{k+2} a_{k+3} \cdots a_{k+n} a_{k+1})^\star a_{k+2} a_{k+3} \cdots a_{k+j} a_i.
\]
This allows us to compute the stationary distribution of the lumped Markov chain 
$\mathcal{M}(\mathsf{KR}(B(n),A))$ by~\eqref{equation.Psi NF stable}
\[
	\Psi^{\mathsf{KR}(B(n),A)}_{y_{j,i}^k} = \frac{x_{k+1} x_{k+2} \cdots x_{k+j}x_i}{1-x_1 x_2 \cdots x_n},
	\qquad \qquad (0 \leqslant k<n, 1\leqslant j \leqslant n, i\neq k+j+1)
\]
where for simplicity we have set $x_m := x_{a_m}$ for $1\leqslant m \leqslant n$ and again all indices are considered 
modulo $n$.

%%%%%%%%%%%%%%%%%%%%%%%%%%%%%%%%%%%%%%%%%%%%%%%%%%%%%%%%
\subsection{Further cyclic walks -- Rees matrix semigroups}
\label{section.BnZp}

Let $S$ be a semigroup, $I$ and $I'$ be non-empty sets, and $P$ a matrix indexed by $I'$ and $I$ with entries 
$p_{i',i}$ taken from $S$. Then the \defn{Rees matrix semigroup} $(S;I,I';P)$ is the set $I\times S \times I'$ together 
with the multiplication
\[
    (i,s,i')(j,t,j') = (i, s \;p_{i',j}\; t, j').
\]
Similarly, define $(S;I,I';P)^\square$ to be the \defn{Rees matrix semigroup with zero} as the set
$I\times S \times I' \cup \{\square\}$. Here $P$ is an $I'\times I$ matrix with entries in $S\cup \{\square\}$
with multiplication 
\[
    (i,s,i')(j,t,j') = \begin{cases} (i, s \;p_{i',j}\; t, j') & \text{if $p_{i',j} \neq \square$,}\\
    \square & \text{else,} \end{cases}
\]
and $\square$ acts as zero.

Consider the special case of the Rees matrix semigroup $S=(Z_p;[n],[n];\operatorname{id})^\square
$, which consists of the elements $\{(i,g,j) \mid 1\leqslant i,j \leqslant n, g\in Z_p\} \cup \{\square\}$, where $Z_p$ is the 
cyclic group with $p$ elements. The multiplication in this case is given by
\[
	(i,g,j) \cdot (k,g',\ell) = \begin{cases}
	(i,gg',\ell) & \text{if $j=k$,}\\
	\square & \text{otherwise,}
	\end{cases}
\]
and $\square$ acts as zero. Let us choose as generators $A = \{a_i \mid 1\leqslant i \leqslant n\}$, where 
$a_i = (i,\operatorname{id},i+1)$ for  $1\leqslant i < n$ and $a_n = (n,(23\ldots p1),1)$, where 
$(23\ldots p1)$ is the generator of $Z_p$ shifting $i\mapsto i+1$ modulo $p$.
The right Cayley graph and the Karnofsky--Rhodes/McCammond expansion are very similar to the case
of $B(n)$ discussed in Section~\ref{section.Bn}, except that now the cycles have length $np$.
Again $\mathsf{KR}(S,A)$ is stable under the McCammond expansion and the stationary contribution
can easily be computed. We will demonstrate this in the next example.

\begin{example}
\label{example.B2Z2}
Consider the special case of $S=(Z_2;[2],[2]; \begin{pmatrix}1&0\\0&1 \end{pmatrix})^\square$ with generators $A=\{a,b\}$, 
where $a=(1,\operatorname{id},2)$ and $b=(2,(21),1)$. The right Cayley graph and $\mathsf{Mc}\circ\mathsf{KR}(S,A)$
are similar to those in Figure~\ref{figure.B2}, except that the cycles now contain the elements $a,ab,aba,abab$ 
and $b,ba,bab,baba$, respectively. The minimal ideal is $K(S)=\{\square\}$ and the normal forms are given by
\[
	\mathcal{N}(S,A) = \{aa,abb,abaa,ababb,bb,baa,babb,babaa\}.
\]
The Markov chain $\mathcal{M}(\mathsf{KR}(S,A))$ is depicted in Figure~\ref{figure.markov chain B2Z2}.
To compute the stationary distribution, we first obtain
\begin{equation*}
\begin{split}
	\mathsf{NF}^{-1}(aa) = a (baba)^\star a, \qquad \qquad &\mathsf{NF}^{-1}(baa) = b (abab)^\star aa,\\
	\mathsf{NF}^{-1}(abaa) = a (baba)^\star baa, \qquad \qquad & \mathsf{NF}^{-1}(babaa) = b(abab)^\star abaa,
\end{split}
\end{equation*}
and similarly with the letters $a$ and $b$ interchanged everywhere.
Hence by~\eqref{equation.Psi NF stable} we obtain the stationary distribution
\begin{equation*}
	\Psi^{\mathsf{KR}(S,A)}_{aa} = \frac{x_a^2}{1-x^2_a x^2_b}, \quad 
	\Psi^{\mathsf{KR}(S,A)}_{baa} = \frac{x^2_a x_b}{1-x^2_a x^2_b}, \quad
	\Psi^{\mathsf{KR}(S,A)}_{abaa} = \frac{x_a^3x_b}{1-x^2_a x^2_b}, \quad  
	\Psi^{\mathsf{KR}(S,A)}_{babaa} = \frac{x_a^3 x^2_b}{1-x^2_a x^2_b},
\end{equation*}
and similarly with the letters $a$ and $b$ interchanged everywhere. It can be verified that the stationary distributions
add up to one, confirming~\eqref{equation.prob condition}.
\end{example}

\begin{figure}
\begin{tikzpicture}[->,>=stealth',shorten >=1pt,auto,node distance=2.8cm,
                    semithick]
  \tikzstyle{every state}=[fill=red,draw=none,text=white]

  \node[state]         (A) {$aa$};
  \node[state]         (B) [right of=A] {$baa$};
  \node[state]          (C) [right of=B] {$abaa$};
  \node[state]         (D) [right of=C] {$babaa$};
  \node[state]         (Ap) [below of =A] {$bb$};
  \node[state]         (Bp) [right of =Ap] {$abb$};
  \node[state]          (Cp) [right of=Bp] {$babb$};
  \node[state]         (Dp) [right of=Cp] {$ababb$};

  \path (A) edge [loop left] node {$a$} (A)
                 edge node {$b$} (B)
           (B) edge node {$a$} (C)
                 edge node [above=0.15cm]{$\quad b$} (Ap)
           (C) edge node {$b$} (D)
                 edge [bend right] node [above] {$a$} (A)
           (D) edge node [above=0.3cm] {$\qquad \qquad b$} (Ap)
                 edge [bend right=38] node [above] {$a$} (A)
            (Ap) edge [loop left] node {$b$} (Ap)
                 edge node [below] {$a$} (Bp)
           (Bp) edge node [below] {$b$} (Cp)
                 edge node [below=0.15cm] {$\quad a$} (A)
           (Cp) edge node [below] {$a$} (Dp)
                 edge [bend left] node {$b$} (Ap)
           (Dp) edge node [below=0.3cm]{$\qquad \qquad a$} (A)
                 edge [bend left=38] node {$b$} (Ap);                              
\end{tikzpicture}
\caption{\label{figure.markov chain B2Z2}
Markov chain $\mathcal{M}(\mathsf{KR}(S,A))$ of Example~\ref{example.B2Z2}.}
\end{figure}

\begin{example}
\label{example.Rees}
Consider the Rees matrix semigroup $S=(Z_2;[2],[2];\begin{pmatrix} 1&1\\1&-1\end{pmatrix})$
with generators $A=\{a,b\}$ with $a=(1,1,2)$ and $b=(2,1,1)$. The Markov chain
$\mathcal{M}(S,A)$ is not irreducible, see Figure~\ref{figure.markov chain Rees}.
Since the minimal ideal $K(S)$ is not left zero, we need to apply Corollary~\ref{corollary.stationary general}.
To this end, we consider $S'=S\cup \{\square\}$ and $A'=A\cup \{\square\}$. The normal forms are
\[
	\mathcal{N}(S',A') = \{\square, a\square, ab\square, aba\square, abab\square, b\square, ba \square, bab\square,
	baba\square\}.
\]
We have
\begin{equation*}
\begin{split}
	\mathsf{NF}^{-1}(\square) &=\square,\\
	\mathsf{NF}^{-1}(a\square) &=aa^\star (bb^\star a a^\star b b^\star a a^\star)^\star \square,\\
	\mathsf{NF}^{-1}(ab\square) &=aa^\star (bb^\star a a^\star b b^\star a a^\star)^\star bb^\star\square,\\
	\mathsf{NF}^{-1}(aba\square) &=aa^\star (bb^\star a a^\star b b^\star a a^\star)^\star bb^\star a a^\star \square,\\
	\mathsf{NF}^{-1}(abab\square) &=aa^\star (bb^\star a a^\star b b^\star a a^\star)^\star bb^\star a a^\star b b^\star \square,
\end{split}
\end{equation*}
and similarly with $a$ and $b$ interchanged. We obtain
\begin{equation*}
\begin{aligned}
	\Psi_\square^{(S',A')} &= x_\square && \stackrel{x_\square \to 0}{\longrightarrow} \quad 0,\\
	\Psi_{a\square}^{(S',A')} &= \frac{x_a x_\square}{(1-x_a)(1-\frac{x_a^2 x_b^2}{(1-x_a)^2 (1-x_b)^2})}
	= \frac{x_a (1-x_a)(1-x_b)^2}{x_\square+2x_ax_b} 
	&& \stackrel{x_\square \to 0}{\longrightarrow} \quad \frac{1}{2}x_a^2,\\
	\Psi_{ab\square}^{(S',A')} &= \frac{x_a x_b (1-x_a)(1-x_b)}{x_\square+2x_ax_b} 
	&& \stackrel{x_\square \to 0}{\longrightarrow} \quad \frac{1}{2}x_a x_b,\\
	\Psi_{aba\square}^{(S',A')} &= \frac{x^2_a x_b (1-x_b)}{x_\square+2x_ax_b} 
	&& \stackrel{x_\square \to 0}{\longrightarrow} \quad \frac{1}{2}x^2_a,\\
	\Psi_{abab\square}^{(S',A')} &= \frac{x^2_a x^2_b}{x_\square+2x_ax_b} 
	&& \stackrel{x_\square \to 0}{\longrightarrow} \quad \frac{1}{2}x_a x_b,\\
\end{aligned}
\end{equation*}
and similarly with $a$ and $b$ interchanged.
\end{example}

\begin{figure}
\begin{tikzpicture}[->,>=stealth',shorten >=1pt,auto,node distance=2.8cm,
                    semithick]
  \tikzstyle{every state}=[fill=red,draw=none,text=white]

  \node[state]         (A) {$a$};
  \node[state]         (B) [right of=A] {$ba$};
  \node[state]          (C) [right of=B] {$aba$};
  \node[state]         (D) [right of=C] {$baba$};

  \path (A) edge [loop above] node {$a$} (A)
                 edge node {$b$} (B)
           (B) edge [loop above] node {$b$} (B)
                 edge node {$a$} (C)
           (C) edge [loop above] node {$a$} (C)
                 edge node {$b$} (D)
           (D) edge [loop above] node {$a$} (D)
                 edge [bend right=38] node [above] {$a$} (A);
\end{tikzpicture}
\caption{\label{figure.markov chain Rees}
Part of the Markov chain $\mathcal{M}(\mathsf{KR}(S,A))$ of Example~\ref{example.Rees}.
The other part is exactly the same with $a$ and $b$ interchanged everywhere. This chain is not irreducible.
}
\end{figure}

%%%%%%%%%%%%%%%%%%%%%%%%%%%%%%%%%%%%%%%%%%%%%%%%%%%%%%%%
\subsection{Markov chains on $\mathscr{R}$-trivial monoids}
\label{section.R trivial}

Markov chains for $\mathscr{R}$-trivial monoids were studied in detail in~\cite{ASST.2015}.
In particular, the eigenvalues of the transition matrix and their multiplicities, the stationary distributions, and bounds on 
the mixing times were derived for general $\mathscr{R}$-trivial monoids. This class of Markov chains contains a vast 
number of previously studied Markov chains, such as the Tsetlin library~\cite{Tsetlin.1963,Hendricks.1972, Hendricks.1973},
walks on hyperplane arrangements~\cite{Bidigare.1997,BD.1998,BHR.1999}, Brown's generalization to left regular 
bands~\cite{Brown.2000}, edge flipping in graphs~\cite{ChungGraham.2012}, random walks on linear extensions
of a poset~\cite{AKS.2014}, and others~\cite{AS.2010, Ayyer.2011,AS.2013,ASST.2015a,ASST.2015}.

We will now show how the stationary distribution of a Markov chain for an $\mathscr{R}$-trivial monoid
as given in~\cite[Theorem 4.12]{ASST.2015} can be derived using the methods of this paper.

Recall that for a semigroup $S$, two elements $s,s'\in S$ are in the same $\mathscr{R}$-class if the corresponding right 
ideals are equal, that is, $s S = s'S$. We say that $S$ is \defn{$\mathscr{R}$-trivial} if all $\mathscr{R}$-classes are trivial, 
meaning that $s S = s' S$ implies that $s=s'$.

The Karnofsky--Rhodes expansion of the right Cayley graph of an $\mathscr{R}$-trivial semigroup 
$(S,A)$ is a directed tree after the removal of loop edges and $\mathsf{KR}(S,A)$ is stable under the McCammond
expansion. If $w=w_1 w_2 \cdots w_\ell \in \mathcal{N}(S,A)$, then
\[
	\mathsf{NF}^{-1}(w) = w_1 N_1^\star w_2 N_2^\star \cdots w_{\ell-1} N_{\ell-1}^\star w_\ell,
\]
where $N_i = N_i^w = \{ a\in A \mid [w_1 \cdots w_i a]_S = [w_1 \cdots w_i]_S\}$ is the set of generators that stabilize
the element $[w_1\cdots w_i]_S$. We set $N_\ell = \emptyset$. Hence the stationary distribution of 
$\mathcal{M}(\mathsf{KR}(S,A))$ is
\[
	\Psi^{\mathsf{KR}(S,A)}_w = \prod_{i=1}^\ell \frac{x_{w_i}}{1-\sum_{a\in N_i} x_a} 
	\qquad \text{for $w = w_1 \cdots w_\ell\in \mathcal{N}(S,A)$}
\]
in agreement with~\cite[Corollary 4.13]{ASST.2015}.

If the Karnofsky--Rhodes expansion is not the same as the original right Cayley graph, one can in fact lump
the Markov chain $\mathcal{M}(\mathsf{KR}(S,A))$ further to $\mathcal{M}(S,A)$ by applying $\varphi \colon (A^+,A) \to 
(S,A)$ of~\eqref{equation.phi}. Recall from Definition~\ref{definition.NF inv} that $\mathsf{Red}(s)$ is the set of all elements 
$w \in \mathcal{N}(S,A)$ such that $[w]_S = s$. For this lumped Markov chain, we obtain the stationary distribution
by~\eqref{equation.Psi NF inv}
\[
	\Psi_s = \sum_{w \in \mathsf{Red}(s)} \prod_{i=1}^{\ell(w)} \frac{x_{w_i}}{1-\sum_{a\in N_i^w} x_a}
	\qquad \text{for $s\in S$},
\]
where $\ell(w)$ is the length of the word $w$, in agreement with~\cite[Theorem 4.12]{ASST.2015}.

%%%%%%%%%%%%%%%%%%%%%%%%%%%%%%%%%%%%%%%%%%%%%%%%%%%%%%%%
\subsection{Adding constants: The $\mathsf{bar}$ operation}
\label{section.bar}

In this and the next section, we use the two operations $\mathsf{bar}$ and $\flat$ introduced in 
Section~\ref{section.bar and flat} (see also \cite{LRS.2017}) to produce new Markov chains from known examples. 

We use two \defn{stability conditions}. The first one is stability under the McCammond expansion:
\begin{equation}
\label{equation.stable}
	\mathsf{Mc} \circ \mathsf{KR}(S,A) = \mathsf{KR}(S,A).
\end{equation}
The second stability condition is stability under both the Karnofsky--Rhodes and McCammond expansion:
\begin{equation}
\label{equation.stable1}
	\mathsf{Mc} \circ \mathsf{KR}(S,A) = (S,A).
\end{equation}
Since $\mathsf{KR}^2=\mathsf{KR}$, stability condition~\eqref{equation.stable1}
implies~\eqref{equation.stable}, but not vice versa. For example, the semigroup $S=\{0,1\}$ with generators $A=\{0,1\}$ 
of Example~\ref{example.01} satisfies~\eqref{equation.stable}, but not~\eqref{equation.stable1}.

\begin{conjecture}
$\mathsf{KR}(S,A)$ is stable under $\mathsf{Mc}$ if and only if $\mathsf{Mc} \circ \mathsf{KR} (S,A)$ is a right 
Cayley graph. In other words, if $\mathsf{Mc}$ changes any of the $\mathscr{R}$-classes of $\mathsf{KR}(S,A)$, 
then $\mathsf{Mc} \circ \mathsf{KR} (S,A)$ cannot be a right Cayley graph.
\end{conjecture}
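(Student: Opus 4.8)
The forward implication is immediate: if $\mathsf{Mc}\circ\mathsf{KR}(S,A)=\mathsf{KR}(S,A)$, then by Proposition~\ref{proposition.KR Cayley} it is a right Cayley graph. So the content is the converse, which I would prove in the contrapositive form stated in the second sentence: assuming $\mathsf{KR}(S,A)$ is \emph{not} stable under $\mathsf{Mc}$, show that $T:=\mathsf{Mc}\circ\mathsf{KR}(S,A)$ is not a right Cayley graph. Write $\rho(w)$ for the vertex of a deterministic complete rooted graph reached from the root along the path labelled $w\in A^\star$. A routine check shows $T$ is deterministic, complete, rooted at a source, and fully accessible, so $T$ is the right Cayley graph of an $A$-semigroup if and only if the relation $w\sim w'\iff\rho(w)=\rho(w')$ on $A^\star$ is a two-sided congruence; the ``right'' half ($w\sim w'\Rightarrow wy\sim w'y$) is automatic from determinism, so being a right Cayley graph is equivalent to the ``left'' half, namely $\rho(w)=\rho(w')\Rightarrow\rho(\gamma w)=\rho(\gamma w')$ for all $\gamma$ --- exactly the condition verified in the proof of Proposition~\ref{proposition.KR Cayley}. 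Hence it suffices to produce words $\alpha,\beta,\gamma$ with $\rho(\alpha)=\rho(\beta)$ in $T$ but $\rho(\gamma\alpha)\neq\rho(\gamma\beta)$ in $T$.

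\emph{Step 1 (localizing the failure).} By \cite[Proposition 2.32]{MRS.2011}, $\mathsf{KR}(S,A)$ is $\mathsf{Mc}$-stable precisely when it has the unique simple path property, so by hypothesis some vertex $x$ admits two distinct simple paths from $\mathbbm 1$. Recall that a vertex of $\mathsf{KR}(S,A)$ records the \emph{set} of transition edges traversed by any path reaching it, so two simple paths to $x$ use the same transition edges; combined with the fact that contracting $\mathscr{R}$-classes of $\mathsf{KR}(S,A)$ yields a tree (the reasoning in the remark following Definition~\ref{definition.normal} applies verbatim to $\mathsf{KR}(S,A)$), the two paths can differ only while wandering inside $\mathscr{R}$-classes. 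I would choose a pair of distinct simple paths to a common vertex of minimal total length; such a pair shares an initial segment $s$ (a simple path $\mathbbm 1\leadsto u$) and then runs internally disjointly as two simple paths $p_1,p_2$ from $u$ to a common vertex $x$, with distinct first and distinct last edges. A short argument shows no transition edge can lie on $p_1$ or $p_2$ (both of its endpoints would have to lie in $\{u,x\}$, which the internal disjointness, distinctness of first edges, and existence of $p_1$ exclude), so $u\sim x$, and $p_1,p_2$ lie entirely inside one $\mathscr{R}$-class $C$. Since $C$ is strongly connected, fix also a simple path $r$ from $x$ back to $u$ inside $C$.

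\emph{Step 2 (the witness).} The governing idea is that the McCammond expansion remembers, at each vertex over $C$, the internally simple route by which $C$ was crossed, whereas a vertex of a right Cayley graph remembers only the element of $S$ it projects to --- and $[s\,p_1]_S=[s\,p_2]_S$ (indeed the two agree already in $\mathsf{KR}(S,A)$). Concretely I would let $\alpha$ be the label of the closed walk $s\cdot p_1\cdot r$ read from the root of $T$, so that the underlying simple path of $\mathsf{KR}(S,A)$ at $\rho(\alpha)$ records the first edge of $p_1$, and let $\ell$ be the label of a closed walk $p_2\cdot r'$ based at $u$ in $\mathsf{KR}(S,A)$, where $r'$ is a return path chosen by a ``first return to $u$'' argument; arranging the returns $r,r'$ carefully makes $\ell$ a loop at $\rho(\alpha)$ in $T$ (reading $\ell$ there merely re-traces a closed walk inside the already-unrolled copy of $C$ that $s\,p_1\,r$ landed in), so that $\beta:=\alpha\,\ell$ satisfies $\rho(\alpha)=\rho(\beta)$. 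It then remains to exhibit a prefix $\gamma$ for which $\ell$ is \emph{not} a loop at $\rho(\gamma\alpha)$: one wants a generator, or short word, whose left action carries $C$ into an $\mathscr{R}$-class whose McCammond unrolling forces the routes through $C$ recorded by $p_1$ and $p_2$ to separate. One expects such a $\gamma$ to exist: if no $\gamma$ separated the two crossings of $C$ then $\sim$ would already be a left congruence on $A^\star$, and running the argument backwards would contradict the failure of the unique simple path property established in Step 1; moreover the faithfulness of the left action of $S$ on its minimal ideal $K(S)$ (Section~\ref{section.markov}) should supply a $\gamma$ that genuinely distinguishes the two geodesics after translation, which one then transports to the statement $\rho(\gamma\alpha)\neq\rho(\gamma\beta)$ via the $\mathsf{NF}^{-1}$ and Kleene-expression bookkeeping of Sections~\ref{section.normal} and~\ref{section.kleene}.

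The main obstacle is precisely the last point of Step 2: making uniform and precise the claim that a left translation which a right Cayley graph must identify nonetheless ``opens up'' the McCammond unrolling of an $\mathscr{R}$-class differently, i.e. that the prefix $\gamma$ can always be found. I expect this to require an explicit description of how $A$ acts from the left on the vertices of $\mathsf{Mc}\circ\mathsf{KR}(S,A)$ lying over a fixed $\mathscr{R}$-class of $\mathsf{KR}(S,A)$ --- in terms of the geodesic/normal-form data of Section~\ref{section.normal} together with Rees coordinates on $K(S)$ --- and is likely the same combinatorics one would need to settle Conjecture~\ref{conjecture.kleene}. As a consistency check, the triple $\alpha=a_1a_2a_3$, $\beta=a_1a_2a_3a_1a_2a_3$, $\gamma=c$ realizes exactly this pattern in Remark~\ref{remark.counter example}.
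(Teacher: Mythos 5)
First, note that the paper offers no proof of this statement: it is stated as a conjecture in Section~\ref{section.bar} and left open, so there is no proof of the authors' to compare yours against, only the internal logic of your own argument. Your forward implication is correct and immediate from Proposition~\ref{proposition.KR Cayley}, and your reformulation of ``is a right Cayley graph'' as the left-congruence condition on the path relation $\sim$ (the right half being automatic from determinism), together with the Step~1 localization of the failure of the unique simple path property to a pair of internally disjoint simple paths inside a single $\mathscr{R}$-class, is a sound and useful reduction.

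The genuine gap is the one you flag yourself at the end of Step~2, and it is not a technical loose end but the entire content of the hard direction: you must actually produce the prefix $\gamma$ with $\rho(\gamma\alpha)\neq\rho(\gamma\beta)$, and neither of your two reasons for its existence works. The first (``if no $\gamma$ separated the two crossings then $\sim$ would already be a left congruence, and running the argument backwards would contradict the failure of the unique simple path property'') is circular: if $\sim$ were a left congruence then $T=\mathsf{Mc}\circ\mathsf{KR}(S,A)$ would be a right Cayley graph, and that is perfectly consistent with $\mathsf{KR}(S,A)$ failing the unique simple path property \emph{unless the conjecture is already known} --- the implication you would need to ``run backwards'' is precisely the one being proved. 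The second reason (faithfulness of the left action of $S$ on $K(S)$) operates at the level of $S$, not at the level of the unrolled copies of an $\mathscr{R}$-class in $T$; as Remark~\ref{remark.counter example} shows, the separation, when it occurs, is visible only in the re-expanded graph, and nothing in your sketch controls how left translation by $\gamma$ interacts with the $\mathsf{Mc}$-unrolling (indeed Conjecture~\ref{conjecture.kleene} is open for closely related reasons). So what you have is a correct framework plus one verified instance ($\alpha=a_1a_2a_3$, $\beta=\alpha^2$, $\gamma=c$), not a proof; the conjecture remains open exactly where it was before.
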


\begin{proposition}
\label{proposition.bar stable}
Suppose that $(S,A)$ satisfies the stability condition~\eqref{equation.stable1}. Then $(S,A)^{\mathsf{bar}}$ 
satisfies the stability condition~\eqref{equation.stable}.
\end{proposition}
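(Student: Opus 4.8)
The plan is to understand precisely what new edges the $\mathsf{bar}$ construction adds to the right Cayley graph of $(S,A)$ and to verify that running the McCammond expansion on the Karnofsky--Rhodes expansion of $(S,A)^{\mathsf{bar}}$ does not change any $\mathscr{R}$-class. First I would recall the structure of $(S,A)^{\mathsf{bar}} = (S^{\mathsf{bar}}, A \cup \{\overline{\mathbbm{1}}\})$: the underlying set is $S \cup \overline{S} \cup \{\overline{\mathbbm{1}}\}$, where right multiplication by any generator $a \in A$ acts on the ``$S$-part'' exactly as in $(S,A)$ and sends $\overline{x} \mapsto \overline{xa}$, $\overline{\mathbbm{1}} \mapsto \overline{a}$, while right multiplication by $\overline{\mathbbm{1}}$ is the constant map onto $\overline{\mathbbm{1}}$ on all of $S^{\mathbbm{1}}$. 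Thus $\mathsf{RCay}(S^{\mathsf{bar}}, A\cup\{\overline{\mathbbm{1}}\})$ consists of two copies of (the vertex set of) $\mathsf{RCay}(S,A)$: the original copy on $S^{\mathbbm{1}}$, and a ``barred'' copy on $\overline{S}\cup\{\overline{\mathbbm{1}}\}$, with $A$-edges inside each copy mirroring those of $\mathsf{RCay}(S,A)$, plus $\overline{\mathbbm{1}}$-edges from every vertex (in either copy) into the single vertex $\overline{\mathbbm{1}}$ of the barred copy. In particular $\overline{\mathbbm{1}}$ is a new sink-like vertex whose $\mathscr{R}$-class in $S^{\mathbbm{1}}$-terms behaves like the minimal ideal of the barred part.

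Next I would analyze the $\mathscr{R}$-classes and transition edges. The key combinatorial observation is that the $\mathscr{R}$-classes of $S^{\mathsf{bar}}$ are: (i) the $\mathscr{R}$-classes of $S^{\mathbbm{1}}$ in the original copy, and (ii) singleton $\mathscr{R}$-classes $\{\overline{x}\}$ in the barred copy (since right-multiplying $\overline x$ by generators moves it around the barred copy analogously to how $x$ moves in the original copy, but the barred copy contains the sink $\overline{\mathbbm{1}}$, collapsing two-sided reachability — I need to check this carefully, but it follows from $\overline{x}\,\overline{\mathbbm 1} = \overline{\mathbbm 1}$ together with $\mathscr{R}$-triviality of the barred part, which is how $K((S,A)^{\mathsf{bar}})$-type arguments run). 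Because every new $\overline{\mathbbm{1}}$-edge is a transition edge (you can never return to the original copy from the barred copy, nor leave $\overline{\mathbbm{1}}$ except via $\overline{\mathbbm 1}$-loops), and because within each copy the $A$-edge structure is a faithful copy of $\mathsf{RCay}(S,A)$, the transition edges of $\mathsf{RCay}(S^{\mathsf{bar}},A\cup\{\overline{\mathbbm 1}\})$ are: the original transition edges of $(S,A)$ occurring in both copies, plus the $\overline{\mathbbm 1}$-edges. Then $\mathsf{KR}(S^{\mathsf{bar}}, A\cup\{\overline{\mathbbm 1}\})$ is built from $\mathsf{RCay}(A^+, A\cup\{\overline{\mathbbm 1}\})$ by identifying paths with the same endpoint-in-$S^{\mathsf{bar}}$ and the same transition-edge set; using the hypothesis~\eqref{equation.stable1} that $\mathsf{Mc}\circ\mathsf{KR}(S,A)=(S,A)$ — i.e.\ $\mathsf{RCay}(S,A)$ already has the unique simple path property and its KR-expansion is trivial — I can argue that within each copy no further Karnofsky--Rhodes identifications are forced beyond those already present in $(S,A)$, and the only genuinely new structure is the $\overline{\mathbbm 1}$-edges, which all point into one vertex.

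Finally I would verify~\eqref{equation.stable}, namely $\mathsf{Mc}\circ\mathsf{KR}(S^{\mathsf{bar}},A\cup\{\overline{\mathbbm 1}\}) = \mathsf{KR}(S^{\mathsf{bar}},A\cup\{\overline{\mathbbm 1}\})$, which by the remarks after Definition~\ref{definition.mccammond} is equivalent to saying that $\mathsf{KR}(S^{\mathsf{bar}},A\cup\{\overline{\mathbbm 1}\})$ already has the unique simple path property from the root $\mathbbm 1$. Here the point is that a simple path from $\mathbbm 1$ can use $\overline{\mathbbm 1}$ at most once (any further $\overline{\mathbbm 1}$-step is a loop at $\overline{\mathbbm 1}$), so a simple path either stays entirely in the original copy — where by hypothesis~\eqref{equation.stable1} simple paths to a given vertex are unique — or consists of a simple path in the original copy to some vertex $s$, followed by one $\overline{\mathbbm 1}$-edge to $\overline{\mathbbm 1}$, followed by a simple path of $A$-edges inside the barred copy to $\overline{x}$; since the barred-copy $A$-edge structure is an isomorphic copy of $\mathsf{RCay}(S,A)$, which has the unique simple path property, and since a simple path cannot re-enter $\overline{\mathbbm 1}$, uniqueness of the simple path to $\overline{x}$ reduces again to~\eqref{equation.stable1}. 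Assembling these cases gives the unique simple path property for $\mathsf{KR}(S^{\mathsf{bar}},A\cup\{\overline{\mathbbm 1}\})$, hence stability under $\mathsf{Mc}$.

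The main obstacle I anticipate is the careful bookkeeping in the middle step: verifying that the Karnofsky--Rhodes identifications in $\mathsf{KR}(S^{\mathsf{bar}},A\cup\{\overline{\mathbbm 1}\})$ genuinely reduce, copy-by-copy, to those of $(S,A)$ with no surprising new mergers arising from a path that dips into the barred copy and a path that stays in the original copy sharing an endpoint and transition-edge set — this cannot happen because the $\overline{\mathbbm 1}$-edge is itself a transition edge that distinguishes such paths, but making this airtight (and correctly identifying the $\mathscr{R}$-classes of $S^{\mathsf{bar}}$, including confirming that $\mathsf{bar}$ does not accidentally merge distinct original $\mathscr{R}$-classes) is the delicate part. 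Everything else is a routine unwinding of the definitions of $\mathsf{bar}$, $\mathsf{KR}$, and $\mathsf{Mc}$ together with the hypothesis~\eqref{equation.stable1}.
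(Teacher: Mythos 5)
Your overall route matches the paper's: describe $\mathsf{RCay}((S,A)^{\mathsf{bar}})$ as the original copy of $\mathsf{RCay}(S,A)$ together with a barred copy reached by $\overline{\mathbbm{1}}$-edges, determine the transition edges, observe that the Karnofsky--Rhodes expansion hangs a separate barred component below each original vertex, and verify the unique simple path property by splitting a simple path at its single $\overline{\mathbbm{1}}$-step. The first and last steps are essentially the paper's argument. However, the middle step --- which you yourself flag as the delicate part --- is genuinely wrong. The barred part $\overline{S}\cup\{\overline{\mathbbm{1}}\}$ is \emph{not} $\mathscr{R}$-trivial and does not split into singleton $\mathscr{R}$-classes: from any $\overline{x}$ the generator $\overline{\mathbbm{1}}$ returns you to $\overline{\mathbbm{1}}$ (since $\overline{x}\cdot\overline{\mathbbm{1}}=\overline{\mathbbm{1}}$), and from $\overline{\mathbbm{1}}$ any word $w$ with $[w]_S=y$ reaches $\overline{y}$, so the entire barred part is one strongly connected component of the right Cayley graph. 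Indeed it is the minimal ideal $K((S,A)^{\mathsf{bar}})$, a \emph{right}-zero semigroup because $\overline{x}\cdot\overline{y}=\overline{y}$; singleton $\mathscr{R}$-classes occur for the flat operation (left-zero minimal ideal), not for $\mathsf{bar}$. Your justification ``nor leave $\overline{\mathbbm{1}}$ except via $\overline{\mathbbm{1}}$-loops'' is false: one leaves $\overline{\mathbbm{1}}$ via the edges $\overline{\mathbbm{1}}\stackrel{a}{\longrightarrow}\overline{a}$ and returns via $\overline{a}\stackrel{\overline{\mathbbm{1}}}{\longrightarrow}\overline{\mathbbm{1}}$.

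Consequently your inventory of transition edges is wrong: the only new transition edges are the edges $x\stackrel{\overline{\mathbbm{1}}}{\longrightarrow}\overline{\mathbbm{1}}$ emanating from vertices $x$ of the \emph{original} copy; neither the $A$-edges inside the barred copy nor the back-edges $\overline{y}\stackrel{\overline{\mathbbm{1}}}{\longrightarrow}\overline{\mathbbm{1}}$ are transitional. This matters because $\mathsf{KR}$ is defined by transition-edge sets: counting the back-edges as transitional, as you do, would force $\mathsf{KR}((S,A)^{\mathsf{bar}})$ to unfold the barred component far beyond what actually happens, so the graph your final step analyzes is not $\mathsf{KR}((S,A)^{\mathsf{bar}})$. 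The correct picture --- one intact barred component, duplicated once below each vertex $v$ of the original copy because the distinct transition edges $v\stackrel{\overline{\mathbbm{1}}}{\longrightarrow}\overline{\mathbbm{1}}$ separate the paths, with no further subdivision inside it --- is exactly what makes your concluding simple-path decomposition valid (a simple path cannot revisit $\overline{\mathbbm{1}}_v$, hence proceeds by $A$-edges through a copy of $\mathsf{RCay}(S,A)$, unique by~\eqref{equation.stable1}). So the repair is local, but as written the ``key combinatorial observation'' on which your proof rests is false.
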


\begin{proof}
By assumption~\eqref{equation.stable1}, $(S,A)$ is stable under both $\mathsf{KR}$ and $\mathsf{Mc}$. Under the 
$\mathsf{bar}$ construction, $\mathsf{RCay}((S,A)^\mathsf{bar})$ is obtained from $\mathsf{RCay}(S,A)$ by adding a new 
edge labeled $\overline{\mathbbm{1}}$ from each vertex to a new vertex labeled $\overline{\mathbbm{1}}$. Underneath 
the vertex $\overline{\mathbbm{1}}$, there is a copy of $\mathsf{RCay}(S,A)$, where each vertex 
$x \in V(\mathsf{RCay}(S,A))$ is replaced by $\overline{x}$ thanks to the relation $\overline{x} \cdot a = \overline{x\cdot a}$
for any $a \in A$. In addition, each vertex below $\overline{\mathbbm{1}}$ has an edge labeled $\overline{\mathbbm{1}}$ 
looping back to vertex $\overline{\mathbbm{1}}$. Since the original $\mathsf{RCay}(S,A)$ was stable under the 
Karnofsky--Rhodes and McCammond expansion, the effect of the Karnofksy--Rhodes expansion of $(S,A)^\mathsf{bar}$
is to have a separate vertex $\overline{\mathbbm{1}}$ below each vertex, which is stable under the McCammond expansion,
proving the claim.
\end{proof}

\begin{figure}
\begin{center}
\begin{tikzpicture}[auto]
\node (A) at (0, 0) {$\mathbbm{1}$};
\node (B) at (-3, -1.5) {$1$};
\node (C) at (3, -1.5) {$2$};
\node (D) at (-3, -3) {$12$};
\node (E) at (3,-3) {$21$};
\node[circle,fill=black,text=white] (G1) at (0,-3) {$G$};
\node[circle,fill=black,text=white] (G2) at (-6,-3) {$G$};
\node[circle,fill=black,text=white] (G3) at (6,-3) {$G$};
\node[circle,fill=black,text=white] (G4) at (-3,-4.5) {$G$};
\node[circle,fill=black,text=white] (G5) at (3,-4.5) {$G$};

\draw[edge,blue,thick] (A) -- (B) node[midway, left] {$1$};
\draw[edge,blue,thick] (A) -- (C) node[midway, right] {$2$};
\draw[edge,blue,thick] (B) -- (D) node[midway, left] {$2$};
\draw[edge,blue,thick] (C) -- (E) node[midway, right] {$1$};
\draw[edge,blue,thick] (A) -- (G1) node[midway, left] {$\overline{\mathbbm{1}}$};
\draw[edge,blue,thick] (B) -- (G2) node[midway, above] {$\overline{\mathbbm{1}}$};
\draw[edge,blue,thick] (C) -- (G3) node[midway, above] {$\overline{\mathbbm{1}}$};
\draw[edge,blue,thick] (D) -- (G4) node[midway, left] {$\overline{\mathbbm{1}}$};
\draw[edge,blue,thick] (E) -- (G5) node[midway, right] {$\overline{\mathbbm{1}}$};
\path
	(B) edge [loop right, red, dashed,thick] node {$1$} (B)
	(C) edge [loop left, red, dashed,thick] node {$2$} (C)
	(D) edge [loop right, red, dashed, thick] node {$1,2$} (D)
	(E) edge [loop left, red, dashed, thick] node {$1,2$} (E);
	
\node (a) at (0,-6) {$\overline{\mathbbm{1}}$};
\node (b) at (-1.5,-7.5) {$\overline{1}$};
\node (c) at (1.5,-7.5) {$\overline{2}$};
\node (d) at (-1.5,-9) {$\overline{12}$};
\node (e) at (1.5,-9) {$\overline{21}$};
\node[circle,fill=black,text=white] (g) at (-3.5,-7.5) {$G$};
\node (gg) at (-2.9,-7.5) {$=$};
\draw[edge,thick] (a) -- (b) node[midway, left] {$1$};
\draw[edge,thick] (a) -- (c) node[midway, right] {$2$};
\draw[edge,thick] (b) -- (d) node[midway, left] {$2$};
\draw[edge,thick] (c) -- (e) node[midway, right] {$1$};
\path
	(b) edge [loop right, red, dashed,thick] node {$1$} (b)
	(c) edge [loop left, red, dashed,thick] node {$2$} (c)
	(d) edge [loop right, red, dashed, thick] node {$1,2$} (d)
	(e) edge [loop left, red, dashed, thick] node {$1,2$} (e);
\path (b) edge[->,thick, bend left=30, dashed, red] node[midway,left] {$\overline{\mathbbm{1}}$} (a);
\path (c) edge[->,thick, bend right=30, dashed, red] node[midway,right] {$\overline{\mathbbm{1}}$} (a);
\path (d) edge[->,thick, bend left=90, dashed, red] node[midway,left] {$\overline{\mathbbm{1}}$} (a);
\path (e) edge[->,thick, bend right=90, dashed, red] node[midway,right] {$\overline{\mathbbm{1}}$} (a);
\end{tikzpicture}
\end{center}
\caption{The Karnofsky--Rhodes expansion of $\left(\mathsf{KR}(P(2),[2])\right)^\mathsf{bar}$.
\label{figure.bar}
}
\end{figure}

\begin{example}
Let us consider $(S,A)=\mathsf{KR}(P(2),[2])$, which satisfies the stability condition~\eqref{equation.stable1}.
Then the Karnofsky--Rhodes expansion of $(S,A)^\mathsf{bar} = \left(\mathsf{KR}(P(2),[2])\right)^\mathsf{bar}$ is given 
in Figure~\ref{figure.bar}. It can easily be checked that it is stable under the McCammond expansion, so that
$(S,A)^\mathsf{bar}$ satisfies~\eqref{equation.stable} verifying Proposition~\ref{proposition.bar stable}.
\end{example}

\begin{remark}
\label{remark.add zero}
Note that if $(S,A)$ satisfies~\eqref{equation.stable}, then $(S\cup \{\square\}, A\cup \{\square\})$, where $\square$ is
a new zero element (that is $\square \cdot x = x \cdot \square = \square$ for all $x \in S$), also 
satisfies~\eqref{equation.stable}.
\end{remark}

\begin{example}
Consider the semigroup $N_n = \langle a \mid a^n = 0 \rangle = \{0,a,a^2,\ldots,a^{n-1}\}$ with generator $\{a\}$.
Its right Cayley graph is a line with $n+1$ vertices starting at $\mathbbm{1}$ with intermediate vertices $a^i$ 
($1\leqslant i < n$) and ending in $0=a^n$, with a loop at $a^n$. Hence it satisfies the stability 
condition~\eqref{equation.stable1}. The Karnofsky--Rhodes expansion of $(N_n,\{a\})^\mathsf{bar}$ is the previous 
right Cayley graph with a string of length $n+1$ attached to each previous vertex starting at $\overline{\mathbbm{1}}$ 
with intermediate vertices $\overline{a}^i$ ($1\leqslant i < n$) and ending in $\overline{a}^n=\overline{0}=0$. In addition, 
there is an edge labeled $\overline{\mathbbm{1}}$ going from each vertex $\overline{a}^i$ ($0\leqslant i \leqslant n$) to 
vertex $\overline{\mathbbm{1}}$. This graph has the unique path property and hence $(N_n,\{a\})^\mathsf{bar}$ 
satisfies~\eqref{equation.stable} confirming Proposition~\ref{proposition.bar stable}.

Let us now add a new zero $\square$ as a generator, so that $A=\{a,\overline{\mathbbm{1}},\square\}$ and
$S = (N_n,\{a\})^\mathsf{bar} \cup \{\square\}$. By Remark~\ref{remark.add zero}, $(S,A)$ also 
satisfies~\eqref{equation.stable}. The minimal ideal is $K(S)=\{\square\}$. Then the normal forms are
\[
	\mathcal{N}(S,A) = \{ a^i \square, a^i  \overline{\mathbbm{1}} a^j \square \mid 0\leqslant i,j \leqslant n\}.
\]
Furthermore
\begin{equation*}
\begin{split}
	\mathsf{NF}^{-1}(a^i \square) &= a^i \square, \qquad \qquad \qquad \;(0\leqslant i < n)\\
	\mathsf{NF}^{-1}(a^n \square) &= a^n a^\star \square,\\ 
	\mathsf{NF}^{-1}(a^i  \overline{\mathbbm{1}} a^j \square) &= a^i \overline{\mathbbm{1}}
	(a^\star \overline{\mathbbm{1}})^\star a^j \square, \qquad (0\leqslant i,j<n)\\
	\mathsf{NF}^{-1}(a^n  \overline{\mathbbm{1}} a^j \square) &= a^n a^\star \overline{\mathbbm{1}}
	(a^\star \overline{\mathbbm{1}})^\star a^j \square, \quad (0\leqslant j<n)\\
	\mathsf{NF}^{-1}(a^i  \overline{\mathbbm{1}} a^n \square) &= a^i \overline{\mathbbm{1}}
	(a^\star \overline{\mathbbm{1}})^\star a^n a^\star \square, \quad (0\leqslant i<n)\\
	\mathsf{NF}^{-1}(a^n  \overline{\mathbbm{1}} a^n \square) &= a^n a^\star \overline{\mathbbm{1}}
	(a^\star \overline{\mathbbm{1}})^\star a^n a^\star \square.
\end{split}
\end{equation*}
It is clear in this case, that each word in the Kleene expressions occurs at most once.
Assigning probabilities $x_a,x_{\overline{\mathbbm{1}}},x_\square$ to the three generators in $A=\{a,\overline{\mathbbm{1}},
\square\}$, we hence obtain the stationary distribution of the lumped Markov chain $\mathcal{M}(S,A)$
\begin{equation*}
\begin{split}
	\Psi^{(S,A)}_{a^i \square} &= x_a^i x_{\square}, \qquad \qquad \qquad (0\leqslant i < n)\\
	\Psi^{(S,A)}_{a^n \square} &= \frac{x_a^n x_{\square}}{1-x_a},\\
	\Psi^{(S,A)}_{a^i \overline{\mathbbm{1}} \overline{a}^j \square} &= \frac{x_a^{i+j} x_{\overline{\mathbbm{1}}} x_\square}
	{1-\frac{x_{\overline{\mathbbm{1}}}}{1-x_a}}, \qquad \qquad (0\leqslant i,j<n)\\
	\Psi^{(S,A)}_{a^n \overline{\mathbbm{1}} \overline{a}^j \square} &= \frac{x_a^{n+j} x_{\overline{\mathbbm{1}}} x_\square}
	{1-x_a-x_{\overline{\mathbbm{1}}}}, \qquad \quad (0\leqslant j<n)\\
	\Psi^{(S,A)}_{a^i \overline{\mathbbm{1}} \overline{a}^n \square} &= \frac{x_a^{i+n} x_{\overline{\mathbbm{1}}} x_\square}
	{1-x_a-x_{\overline{\mathbbm{1}}}}, \qquad \quad (0\leqslant i<n)\\
	\Psi^{(S,A)}_{a^n \overline{\mathbbm{1}} \overline{a}^n \square} &= \frac{x_a^{2n} x_{\overline{\mathbbm{1}}} x_\square}
	{(1-x_a)(1-x_a-x_{\overline{\mathbbm{1}}})}.
\end{split}
\end{equation*}
Using $\sum_{i=0}^{n-1} x_a^i = \frac{1-x_a^n}{1-x_a}$ it can easily be verified that the stationary probabilities
add up to one.
\end{example}

Now let us consider a general finite $A$-semigroup $(S,A)$ that satisfies~\eqref{equation.stable1}. 
By Proposition~\ref{proposition.bar stable} and Remark~\ref{remark.add zero}, $(S',A')=((S,A)^\mathsf{bar}\cup \{\square\}, 
A\cup \{\overline{\mathbbm{1}},\square\})$ satisfies~\eqref{equation.stable} and hence yields a Markov chain
$\mathcal{M}(S',A')$ with minimal ideal $K(S')=\{\square\}$. The normal forms $\mathcal{N}(S',A')$ are given by
\[
	w \square, \; w \overline{\mathbbm{1}} w' \quad \text{for $w,w'\in \mathcal{N}$,}
\]
where $\mathcal{N} := \mathcal{N}(S\cup\{\square\},A\cup\{\square\},\{\square\}) \setminus \{\square\}$, where the removal
of $\square$ means the normal forms in $\mathcal{N}(S\cup\{\square\},A\cup\{\square\},\{\square\})$ without the last $\square$.
If the Kleene expressions for $\mathsf{NF}^{-1}(w)$ for $w\in \mathcal{N}$ are known, then the Kleene expressions for 
the normal forms in $\mathcal{N}(S',A')$ can also be derived:
\begin{equation}
\label{equation.bar normal}
\begin{split}
	\mathsf{NF}^{-1}(w\square) &= \mathsf{NF}^{-1}(w) \square,\\
	\mathsf{NF}^{-1}(w \overline{\mathbbm{1}} w' \square) &= \mathsf{NF}^{-1}(w) \overline{\mathbbm{1}}
	\left( \cup_{v\in \mathcal{N}} \mathsf{NF}^{-1}(v) 
	\overline{\mathbbm{1}} \right)^\star \mathsf{NF}^{-1}(w') \square.
\end{split}
\end{equation}

\begin{remark}
Intuitively, one can interpret the $\mathsf{bar}$ operation as reproduction: each cell (or vertex in the Cayley graph)
produces a copy of itself (with edges back to its origin $\overline{\mathbbm{1}}$). Equivalently, applying $\mathbbm{1}$
means to reset the library to the beginning.
By Proposition~\ref{proposition.bar stable}, one can repeat the operation $\mathsf{KR} \circ \mathsf{bar}$ an arbitrary
number of times. Repeating it $n$ times and letting $n$ tend to infinity, has the flavor of a fractal:
in any portion of the graph, one can zoom in and find the original right Cayley graph, or in fact $(\mathsf{KR} \circ 
\mathsf{bar})^k(S,A)$ for any $k>0$.
\end{remark}

%%%%%%%%%%%%%%%%%%%%%%%%%%%%%%%%%%%%%%%%%%%%%%%%%%%%%%%%
\subsection{The $\flat$ operation}
\label{section.flat}

Recall the flat operation $\flat$ from Section~\ref{section.bar and flat}.

\begin{proposition}
\label{proposition.flat stable}
Suppose that $(S,A)$ satisfies the stability condition~\eqref{equation.stable}. Then $(S,A)^\flat$ also
satisfies the stability condition~\eqref{equation.stable}.
\end{proposition}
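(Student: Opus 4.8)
The plan is to reduce the statement to the \emph{unique simple path property}. Recall that a rooted graph is fixed by $\mathsf{Mc}$ precisely when it has the unique simple path property, since by Definition~\ref{definition.mccammond} the vertices of $\Gamma^{\mathsf{Mc}}$ are $\mathsf{Simple}(\Gamma,\mathbbm{1})$, and the map $p\mapsto\tau(p)$ is a bijection onto $V(\Gamma)$ iff that property holds (cf.\ Example~\ref{example.markov} and Remark~\ref{remark.mccammond stable}). So, writing $(S,A)^{\flat}=(S^{\flat},A')$ with $A'=A\cup\{\widetilde{\mathbbm{1}}\}$, it suffices to show that $\mathsf{KR}((S,A)^{\flat})$, rooted at $\mathbbm{1}$, has the unique simple path property, given that $\mathsf{KR}(S,A)$ does.

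First I would write down $\mathsf{RCay}((S,A)^{\flat})$ from the defining relations of $\flat$ in Section~\ref{section.bar and flat}. The vertex set is $\bigl(S^{\flat}\bigr)^{\mathbbm{1}}$, and the relations $v\cdot\widetilde{\mathbbm{1}}=\widetilde{v}$, $\widetilde{v}\cdot a=\widetilde{v}$ for $a\in A$, and $\widetilde{v}\cdot\widetilde{\mathbbm{1}}=\widetilde{v}$ show that $\mathsf{RCay}((S,A)^{\flat})$ is obtained from $\mathsf{RCay}(S,A)$ (on $S^{\mathbbm{1}}$, with its $A$-edges, since $v\cdot a=va$ there too) by adjoining, for each vertex $v\in S^{\mathbbm{1}}$, a new ``sink'' vertex $\widetilde{v}$ together with a single edge $v\stackrel{\widetilde{\mathbbm{1}}}{\longrightarrow}\widetilde{v}$ and a loop at $\widetilde{v}$ labelled by every generator in $A'$. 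In particular the $\widetilde{v}$ are pairwise distinct, are the only terminal vertices, and constitute $K((S,A)^{\flat})$.

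Next I would compute $\mathsf{KR}((S,A)^{\flat})$ from this description. The crucial observation is that the sinks have no outgoing edges other than loops, so adjoining them creates no new return paths; hence for a word $w\in A^{+}$ its set of transition edges in $\mathsf{RCay}((S,A)^{\flat})$ coincides with its set of transition edges in $\mathsf{RCay}(S,A)$, while each edge $v\stackrel{\widetilde{\mathbbm{1}}}{\longrightarrow}\widetilde{v}$ is a transition edge and the loops at $\widetilde{v}$ are not. Applying the Karnofsky--Rhodes identification of Section~\ref{section.KR} then yields: (i) the full subgraph of $\mathsf{KR}((S,A)^{\flat})$ spanned by the $A^{+}$-paths is exactly $\mathsf{KR}(S,A)$; and (ii) once a path uses the letter $\widetilde{\mathbbm{1}}$ it lands in a sink and stays there, so for each vertex $p$ of $\mathsf{KR}(S,A)$ all words $w\widetilde{\mathbbm{1}}u$ with $w$ a representative of $p$ have the same endpoint and the same transition-edge set, hence determine a single new vertex $\widetilde{p}$ of $\mathsf{KR}((S,A)^{\flat})$, with distinct $p$ giving distinct $\widetilde{p}$. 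Thus $\mathsf{KR}((S,A)^{\flat})$ is $\mathsf{KR}(S,A)$ with one sink $\widetilde{p}$ hung off each vertex $p$ by a $\widetilde{\mathbbm{1}}$-edge, plus loops at each $\widetilde{p}$ labelled by all of $A'$. (This also agrees with, and could alternatively be deduced from, the remark in Section~\ref{section.bar and flat} identifying $\bigl(\mathsf{KR}(S,A)\bigr)^{\flat}$ with $\mathsf{KR}(S\cup\{\square\},A\cup\{\square\})$.)

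Finally, the unique simple path property transfers: a simple path from $\mathbbm{1}$ in $\mathsf{KR}((S,A)^{\flat})$ either never uses $\widetilde{\mathbbm{1}}$, in which case it is a simple path in $\mathsf{KR}(S,A)$ and is determined by its endpoint by hypothesis; or it uses $\widetilde{\mathbbm{1}}$, necessarily exactly once and as its last edge (any further edge out of a sink is a loop and destroys simplicity), reaching some $\widetilde{p}$, and the portion before it is the unique simple path $\mathbbm{1}\to p$ in $\mathsf{KR}(S,A)$ followed by the unique edge $p\stackrel{\widetilde{\mathbbm{1}}}{\longrightarrow}\widetilde{p}$. Hence every vertex of $\mathsf{KR}((S,A)^{\flat})$ has a unique simple path from $\mathbbm{1}$, so $\mathsf{KR}((S,A)^{\flat})$ is fixed by $\mathsf{Mc}$ and $(S,A)^{\flat}$ satisfies~\eqref{equation.stable}. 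I expect the main technical point to be steps (i)--(ii): carefully checking that attaching the sinks changes neither the Karnofsky--Rhodes identifications among $A^{+}$-paths nor the transition-edge data, so that the ``core'' of $\mathsf{KR}((S,A)^{\flat})$ is literally $\mathsf{KR}(S,A)$ and exactly one sink descends from each of its vertices.
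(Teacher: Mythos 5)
Your proposal is correct and follows essentially the same route as the paper's proof: describe $\mathsf{RCay}((S,A)^\flat)$ as $\mathsf{RCay}(S,A)$ with a $\widetilde{\mathbbm{1}}$-labelled sink attached to each vertex, observe that $\mathsf{KR}$ commutes with $\flat$ so that $\mathsf{KR}((S,A)^\flat)$ is $\mathsf{KR}(S,A)$ with one sink per vertex, and transfer the unique simple path property. The only difference is that you spell out in detail (via the invariance of the transition-edge data) why $\mathsf{KR}$ commutes with $\flat$, a step the paper merely asserts with ``Observe that''.
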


\begin{proof}
The right Cayley graph $\mathsf{RCay}((S,A)^\flat)$ can be obtained from $\mathsf{RCay}(S,A)$ by adding
to each vertex $x$ a new edge labeled $\widetilde{\mathbbm{1}}$ to $\widetilde{x}$. Observe that this
implies that $\mathsf{KR}$ commutes with $\flat$. By assumption, the Karnofsky--Rhodes expansion $\mathsf{KR}(S,A)$ 
has the unique path property (since it is stable under the McCammond expansion by~\eqref{equation.stable}). Since 
$\mathsf{KR}$ and $\flat$ commute, $\mathsf{KR}((S,A)^\flat)$ is obtained from $\mathsf{KR}(S,A)$ by adding a new edge
labeled $\widetilde{\mathbbm{1}}$ to each vertex $x$ which goes to $\widetilde{x}$, which is a trivial one point 
$\mathscr{R}$-class. Since $\mathsf{KR}(S,A)$ has the unique path property so does $\mathsf{KR}((S,A)^\flat)$, proving 
the claim.
\end{proof}

\begin{example}
Consider $(P(2),[2])$. Recall that $(P(n),[n])$ yields the Tsetlin library (see Section~\ref{section.Tsetlin library}) and 
satisfies the stability condition~\eqref{equation.stable}. The Karnofsky--Rhodes expansion of $(P(2),[2])^\flat$ is depicted 
below:
\begin{center}
\begin{tikzpicture}
\node (A) at (0, 0) {$\mathbbm{1}$};
\node (B) at (-2,-1) {$1$};
\node(C) at (2,-1) {$2$};
\node(D) at (-2,-2.5) {$12$};
\node(E) at (2,-2.5) {$21$};
\node(F) at (-4,-4) {$\widetilde{1}$};
\node(G) at (-2,-4) {$\widetilde{12}$};
\node(H) at (2,-4) {$\widetilde{21}$};
\node(I) at (4,-4) {$\widetilde{2}$};
\node(J) at (0,-4) {$\widetilde{\mathbbm{1}}$};
\draw[edge,blue,thick] (A) -- (B) node[midway, above] {$1$};
\draw[edge,blue,thick] (A) -- (C) node[midway, above] {$2$};
\draw[edge,blue,thick] (B) -- (D) node[midway, left] {$2$};
\draw[edge,blue,thick] (C) -- (E) node[midway, right] {$1$};
\draw[edge,blue,thick] (D) -- (G) node[midway, left] {$\widetilde{\mathbbm{1}}$};
\draw[edge,blue,thick] (E) -- (H) node[midway, right] {$\widetilde{\mathbbm{1}}$};
\path (B) edge[->,thick, bend right = 40, blue] node[midway,left] {$\widetilde{\mathbbm{1}}$} (F);
\path (C) edge[->,thick, bend left = 40, blue] node[midway,right] {$\widetilde{\mathbbm{1}}$} (I);
\draw[edge,blue,thick] (A) -- (J) node[midway, left] {$\widetilde{\mathbbm{1}}$};
\path
	(B) edge [loop right, red, dashed,thick] node {$1$} (B)
	(C) edge [loop left, red, dashed, thick] node {$2$} (C)
	(D) edge [loop left, red, dashed,thick] node {$1,2$} (D)
	(E) edge [loop right, red, dashed,thick] node {$1,2$} (E)
	(F) edge [loop left, red, dashed,thick] node {$1,2,\widetilde{\mathbbm{1}}$} (F)
	(G) edge [loop left, red, dashed,thick] node {$1,2,\widetilde{\mathbbm{1}}$} (G)
	(J) edge [loop left, red, dashed,thick] node {$1,2,\widetilde{\mathbbm{1}}$} (J)
	(H) edge [loop right, red, dashed,thick] node {$1,2,\widetilde{\mathbbm{1}}$} (H)
	(I) edge [loop right, red, dashed,thick] node {$1,2,\widetilde{\mathbbm{1}}$} (I);
\end{tikzpicture}
\end{center}
This expansion indeed has the unique path property, so that stability condition~\eqref{equation.stable}
holds. This verifies Proposition~\ref{proposition.flat stable}.
\end{example}

Propositions~\ref{proposition.bar stable} and~\ref{proposition.flat stable} allow us to construct an infinite
tower of semigroups satisfying one of the stability conditions from a given semigroup.

\begin{theorem}
\label{theorem.bar flat}
Suppose $(S,A)$ satisfies stability condition~\eqref{equation.stable}. Then
\begin{equation*}
\begin{split}
	\left(\mathsf{bar} \circ \mathsf{KR} \circ \flat\right)^n (S,A) \qquad 
	&\text{satisfies stability condition~\eqref{equation.stable} for any $n\geqslant 0$,}\\
	\flat \circ \left(\mathsf{bar} \circ \mathsf{KR} \circ \flat\right)^n (S,A) \qquad 
	&\text{satisfies stability condition~\eqref{equation.stable} for any $n\geqslant 0$,}\\
	\mathsf{KR}\circ \flat \circ \left(\mathsf{bar} \circ \mathsf{KR} \circ \flat\right)^n (S,A) \qquad 
	&\text{satisfies stability condition~\eqref{equation.stable1} for any $n\geqslant 0$.}
\end{split}
\end{equation*}
Similarly, if $(S,A)$ satisfies stability condition~\eqref{equation.stable1}, then
\begin{equation*}
\begin{split}
	\left(\mathsf{KR} \circ \flat \circ \mathsf{bar}\right)^n (S,A) \qquad 
	&\text{satisfies stability condition~\eqref{equation.stable1} for any $n\geqslant 0$,}\\
	\mathsf{bar}  \circ \left(\mathsf{KR} \circ \flat \circ \mathsf{bar}\right)^n (S,A) \qquad 
	&\text{satisfies stability condition~\eqref{equation.stable} for any $n\geqslant 0$,}\\
	\flat \circ \mathsf{bar}  \circ \left(\mathsf{KR} \circ \flat \circ \mathsf{bar}\right)^n (S,A) \qquad
	&\text{satisfies stability condition~\eqref{equation.stable} for any $n\geqslant 0$.}
\end{split}
\end{equation*}
\end{theorem}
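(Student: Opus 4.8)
The plan is to deduce the entire statement from Proposition~\ref{proposition.bar stable} and Proposition~\ref{proposition.flat stable}, glued together by one elementary observation: \emph{if $(T,B)$ satisfies~\eqref{equation.stable}, then $\mathsf{KR}(T,B)$ satisfies~\eqref{equation.stable1}}. This is immediate from the stability of the Karnofsky--Rhodes expansion, $\mathsf{KR}(\mathsf{KR}(T,B),B)=\mathsf{KR}(T,B)$: applying $\mathsf{Mc}\circ\mathsf{KR}$ to the $B$-semigroup $\mathsf{KR}(T,B)$ yields $\mathsf{Mc}\circ\mathsf{KR}(\mathsf{KR}(T,B),B)=\mathsf{Mc}\circ\mathsf{KR}(T,B)$, which equals $\mathsf{KR}(T,B)$ by~\eqref{equation.stable}; the resulting identity $\mathsf{Mc}\circ\mathsf{KR}(\mathsf{KR}(T,B),B)=\mathsf{KR}(T,B)$ is exactly~\eqref{equation.stable1} for $\mathsf{KR}(T,B)$. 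Here $\mathsf{KR}$ leaves the generating set alone; I will carry along the bookkeeping that $\flat$ adjoins the generator $\widetilde{\mathbbm{1}}$ and $\mathsf{bar}$ adjoins $\overline{\mathbbm{1}}$, but since Propositions~\ref{proposition.bar stable} and~\ref{proposition.flat stable} are stated for arbitrary $A$-semigroups this causes no difficulty.

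Next I would package one step of each tower. Set $\Phi=\mathsf{bar}\circ\mathsf{KR}\circ\flat$. If $(T,B)$ satisfies~\eqref{equation.stable}, then $(T,B)^\flat$ satisfies~\eqref{equation.stable} by Proposition~\ref{proposition.flat stable}; hence $\mathsf{KR}\bigl((T,B)^\flat\bigr)$ satisfies~\eqref{equation.stable1} by the observation; hence $\Phi(T,B)$, which is the $\mathsf{bar}$ of the previous semigroup, satisfies~\eqref{equation.stable} by Proposition~\ref{proposition.bar stable}. So $\Phi$ preserves the class of $A$-semigroups satisfying~\eqref{equation.stable}, and an immediate induction on $n$ gives that $\Phi^n(S,A)$ satisfies~\eqref{equation.stable} for all $n\geqslant 0$ whenever $(S,A)$ does; this is the first line. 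For the second line, one more application of Proposition~\ref{proposition.flat stable} shows $\flat\circ\Phi^n(S,A)$ satisfies~\eqref{equation.stable}, and for the third line the observation then gives that $\mathsf{KR}\circ\flat\circ\Phi^n(S,A)$ satisfies~\eqref{equation.stable1}.

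The ``similarly'' half is the mirror image. Assume $(S,A)$ satisfies~\eqref{equation.stable1} and set $\Psi=\mathsf{KR}\circ\flat\circ\mathsf{bar}$. If $(T,B)$ satisfies~\eqref{equation.stable1}, then $(T,B)^{\mathsf{bar}}$ satisfies~\eqref{equation.stable} by Proposition~\ref{proposition.bar stable}; hence $\bigl((T,B)^{\mathsf{bar}}\bigr)^\flat$ satisfies~\eqref{equation.stable} by Proposition~\ref{proposition.flat stable}; hence $\Psi(T,B)$ satisfies~\eqref{equation.stable1} by the observation. So $\Psi$ preserves~\eqref{equation.stable1}, and by induction $\Psi^n(S,A)$ satisfies~\eqref{equation.stable1} for all $n\geqslant 0$, the first ``similarly'' line. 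Applying $\mathsf{bar}$ once more (Proposition~\ref{proposition.bar stable}) yields that $\mathsf{bar}\circ\Psi^n(S,A)$ satisfies~\eqref{equation.stable}, the second line, and applying $\flat$ after that (Proposition~\ref{proposition.flat stable}) yields that $\flat\circ\mathsf{bar}\circ\Psi^n(S,A)$ satisfies~\eqref{equation.stable}, the third line.

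The only genuinely new ingredient is the one-line observation in the first paragraph; everything else is an induction over the already established propositions. The point requiring the most care when turning this sketch into a full argument is ensuring that at each stage the operation is applied to the semigroup equipped with the correct enlarged generating set, so that the hypotheses of Propositions~\ref{proposition.bar stable} and~\ref{proposition.flat stable} are met verbatim; this is routine precisely because those statements are generator-agnostic, but it deserves to be said explicitly once.
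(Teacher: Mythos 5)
Your proposal is correct and is essentially the paper's argument: the paper's proof is the one-line remark that the theorem ``follows directly from Propositions~\ref{proposition.bar stable} and~\ref{proposition.flat stable} and the fact that $\mathsf{KR}^2=\mathsf{KR}$,'' and your write-up simply makes explicit the two things left implicit there, namely that $\mathsf{KR}^2=\mathsf{KR}$ is used exactly to upgrade~\eqref{equation.stable} for $(T,B)$ to~\eqref{equation.stable1} for $\mathsf{KR}(T,B)$, and that the towers are handled by induction on one composite step. Nothing further is needed.
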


\begin{proof}
This follows directly from Propositions~\ref{proposition.bar stable} and~\ref{proposition.flat stable} and the fact
that $\mathsf{KR}^2 = \mathsf{KR}$.
\end{proof}

\begin{example}
Let us now compute the normal forms and Kleene expressions for
\[
	(S,A) = \flat \circ \mathsf{KR} \circ \mathsf{bar} \circ \mathsf{KR}(P(n),[n]).
\]
Recall that $\mathsf{KR}$ and $\flat$ commute. The Cayley graph for $\mathsf{KR} \circ \mathsf{bar} \circ 
\mathsf{KR}(P(2),[2])$ is given in Figure~\ref{figure.bar}. Let $\mathcal{N}$ denote the set of normal forms of 
$\flat \circ \mathsf{KR}(P(n),[n])$ with the last $\widetilde{\mathbbm{1}}$ removed. The normal forms of $(S,A)$ are 
given by
\begin{equation*}
	w \widetilde{\mathbbm{1}}, \quad 
	w \overline{\mathbbm{1}} w' \widetilde{\mathbbm{1}} \qquad \text{for $w,w' \in \mathcal{N}$.}
\end{equation*}
Then $\mathsf{NF}^{-1}(w \widetilde{\mathbbm{1}})$ and $\mathsf{NF}^{-1}(w \overline{\mathbbm{1}} w' 
\widetilde{\mathbbm{1}})$ are given by the same formula as in~\eqref{equation.bar normal} with $\square$
replaced by $\widetilde{\mathbbm{1}}$. The normal forms in $\mathcal{N}$ are all words without repeated letters
in the alphabet $[n]$. For a word $w=w_1 w_2 \ldots w_k \in \mathcal{N}$, we have
\[
	\mathsf{NF}^{-1}(w) = w_1 w_1^\star w_2 \{w_1,w_2\}^\star \cdots w_k \{w_1,\ldots,w_k\}^\star.
\]
It is clear in this example that no words are repeated in the Kleene expressions. Hence the stationary distribution
follows directly by applying~\eqref{equation.geometric}.
\end{example}

%%%%%%%%%%%%%%%%%%%%%%%%%%%%%%%%%%%%%%%%%%%%%%%%%%%%%%%%
\subsection{Burnside examples}
\label{section.burnside}

The Burnside semigroups with generators in $A$ are the infinite semigroups of the form
\[
	\mathcal{B}(m,n) = \langle A \mid t^m = t^{m+n} \quad \forall t\in A^+\rangle.
\]
For $m\geqslant 6$ and $n\geqslant 1$, McCammond~\cite{McCammond.1991} showed that the Burnside 
semigroups are finite $\mathscr{J}$-above, have a decidable word problem, and their maximal subgroups are cyclic.
These results were generalized by de Luca and Varricchio~\cite{Luca.Varricchio.1992}, Guba~\cite{Guba.1993,
Guba.1993a}, and do Lago~\cite{Lago.1996} to $m\geqslant 3$ and $n\geqslant 1$.
Recall that $\mathscr{J}$-order in a semigroup $S$ is defined by $s\geqslant_{\mathscr{J}} s'$ if 
$s'=s$, $s'=xs$, $s'=sy$, or $s'=xsy$ for some
$x,y\in S$. If for every $s\in S$, there are only finitely many elements $\mathscr{J}$-above $s$, then $S$ is
 called finite $\mathscr{J}$-above.

In particular, the above results imply that for any $s \in \mathcal{B}(m,n)$, the elements in $\{s' \in \mathcal{B}(m,n) \mid
s' \geqslant_{\mathscr{J}} s\}$ together with zero $\square$ form a finite semigroup. Let us call this semigroup 
$S^{\mathscr{J}}_s$.

\begin{conjecture}
\label{conjecture.stable}
$(\mathsf{KR}(S^{\mathscr{J}}_s),A)$ satisfies the stability condition~\eqref{equation.stable1}. 
\end{conjecture}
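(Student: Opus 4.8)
The plan is to translate Conjecture~\ref{conjecture.stable} into the unique simple path property of one graph, and then feed in the structure theory of $\mathcal{B}(m,n)$. Abbreviate $T:=S^{\mathscr{J}}_s$. By Proposition~\ref{proposition.KR Cayley}, $(\mathsf{KR}(T,A),A)$ is an $A$-semigroup, and since $\mathsf{KR}$ is stable we have $\mathsf{KR}(\mathsf{KR}(T,A),A)=\mathsf{KR}(T,A)$; hence the stability condition~\eqref{equation.stable1} for $(\mathsf{KR}(T,A),A)$ says exactly that $\mathsf{Mc}\circ\mathsf{KR}(T,A)=\mathsf{KR}(T,A)$. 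A rooted graph is fixed by $\mathsf{Mc}$ precisely when it has the unique simple path property (Definition~\ref{definition.mccammond} together with~\cite[Section 2.7]{MRS.2011}), so it suffices to prove that $(\mathsf{KR}(T,A),\mathbbm{1})$ has the unique simple path property: every vertex is the endpoint of exactly one simple path starting at $\mathbbm{1}$.

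The crucial input is the internal shape of the strongly connected components, which I would isolate as follows. \medskip\noindent\emph{Key Lemma.} In $\mathsf{RCay}(\mathcal{B}(m,n),A)$ each $\mathscr{R}$-class, viewed with only its internal edges, is either a single vertex or a single directed cycle; equivalently, for every $u\in\mathcal{B}(m,n)$ there is at most one $a\in A$ with $ua\,\mathscr{R}\,u$. \medskip This is the step that genuinely uses the Burnside theory, and I expect it to be the main obstacle. The idea is that, by the solution of the word problem, the nontrivial $\mathscr{R}$-classes of $\mathcal{B}(m,n)$ are exactly those of the torsion elements $t^{m}$ with $t\in A^{+}$ primitive, and the relation $t^{m}=t^{m+n}$ forces such a class to be the directed cycle obtained by reading the periodic word with period $t$; in particular the only generator that keeps a state of the class inside the class is the single letter of $t$ that continues the period. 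This is essentially a careful reading of McCammond's normal forms~\cite{McCammond.1991} and the strengthenings of de Luca--Varricchio~\cite{Luca.Varricchio.1992}, Guba~\cite{Guba.1993,Guba.1993a}, and do Lago~\cite{Lago.1996}, but it is not recorded in the literature in this packaged form, which is why the statement is left as a conjecture. The Lemma then transfers to $T$: a nonzero $\mathscr{R}$-class of the Rees quotient $T$ equals an $\mathscr{R}$-class of $\mathcal{B}(m,n)$ lying $\geqslant_{\mathscr{J}}s$, with the same elements and the same internal edges, while $\{\square\}$ is a single vertex; and passing to the Karnofsky--Rhodes expansion only splits each $\mathscr{R}$-class into copies (indexed by the set of transition edges already traversed) without altering an internal subgraph, so every $\mathscr{R}$-class of $\mathsf{KR}(T,A)$ is again a point or a directed cycle.

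Granting the Key Lemma, I would prove the unique simple path property by induction on the height of the $\mathscr{R}$-class of a vertex $v$ in the $\mathscr{R}$-order of $\mathsf{KR}(T,A)$ (with $\{\mathbbm{1}\}$ of height $0$). For $v=\mathbbm{1}$ the only simple path is the empty one. In general, since $\mathsf{KR}$ is stable, any two paths in $\mathsf{KR}(T,A)$ from $\mathbbm{1}$ with endpoint $v$ traverse the same set $\mathcal{E}$ of transition edges of $\mathsf{KR}(T,A)$; a transition edge goes to a strictly smaller $\mathscr{R}$-class, which in a finite semigroup cannot be re-entered, so $\mathcal{E}$ contains exactly one transition edge $e$ whose terminal vertex lies in the $\mathscr{R}$-class $Q$ of $v$. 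Any simple path $p$ from $\mathbbm{1}$ to $v$ therefore factors as $p=p'\,e\,q$, where $q$ is a simple directed path inside $Q$ from $\tau(e)$ to $v$ and $p'$ is a simple path from $\mathbbm{1}$ to the fixed vertex $\iota(e)$. By the Key Lemma $Q$ is a directed cycle, so $q$ is the unique simple directed path in $Q$ from $\tau(e)$ to $v$; and $\iota(e)$ has strictly smaller height, so $p'$ is unique by the induction hypothesis. Hence $p$ is unique, $(\mathsf{KR}(T,A),\mathbbm{1})$ has the unique simple path property, and Conjecture~\ref{conjecture.stable} follows.
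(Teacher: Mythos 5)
First, a point of comparison: the paper does not prove this statement at all --- it is recorded as an open conjecture, with only the remark that it ``should follow from'' the results of McCammond and do~Lago. So your attempt can only be judged on its own terms. Your reduction is correct: since $\mathsf{KR}^2=\mathsf{KR}$, condition~\eqref{equation.stable1} for $(\mathsf{KR}(T,A),A)$ with $T=S^{\mathscr{J}}_s$ is exactly the assertion that $\mathsf{KR}(T,A)$ is fixed by $\mathsf{Mc}$, i.e.\ has the unique simple path property, and your observation that all paths into a fixed vertex of a stable Karnofsky--Rhodes expansion must enter its $\mathscr{R}$-class through the same transition edge is sound and sets up a reasonable induction on the $\mathscr{R}$-order.

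The gap is that your Key Lemma is false, and the argument collapses exactly where you flagged it as the main obstacle. Take $u=(a^mb^m)^m$ in $\mathcal{B}(m,n)$ with $A=\{a,b\}$. Then $ua\cdot a^{m-1}b^m(a^mb^m)^{n-1}=(a^mb^m)^{m+n}=u$, so $ua\,\mathscr{R}\,u$; and since $b^m=b^{m+n}$ gives $u=(a^mb^m)^{m-1}a^mb^{m+n}=ub^n$, also $ub\,\mathscr{R}\,u$. Moreover $ua\neq ub$, because the last letter of a word is invariant under every rewriting $xt^my\leftrightarrow xt^{m+n}y$. Hence $u$ has two distinct outgoing edges internal to its strongly connected component, which is therefore neither a single vertex nor a single directed cycle; this persists in $S^{\mathscr{J}}_s$ for any $s$ with $u\geqslant_{\mathscr{J}}s$ (Green's relations on nonzero elements pass to the Rees quotient, and $ua,ub\,\mathscr{J}\,u$) and in each copy of that class inside $\mathsf{KR}(T,A)$, since the edges $u\to ua$ and $u\to ub$ are not transition edges. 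The step ``by the Key Lemma $Q$ is a directed cycle, so $q$ is the unique simple directed path in $Q$'' therefore has no support. Two further warnings: your two formulations of the Key Lemma are not actually equivalent (a single vertex carrying two loops satisfies the first and violates the second), and ``$\mathscr{R}$-classes are points or cycles'' cannot be the right invariant in this theory anyway --- in $\mathsf{KR}(P(n),[n])$, which does satisfy~\eqref{equation.stable}, vertices have several internal out-edges and uniqueness of simple paths holds only because every internal edge falls back onto the geodesic from $\mathbbm{1}$. A genuine proof of Conjecture~\ref{conjecture.stable} would have to establish a statement of that finer type for the $\mathscr{R}$-classes of $\mathsf{KR}(S^{\mathscr{J}}_s,A)$ from McCammond's and do~Lago's description of the $\mathscr{D}$-class structure; your write-up does not do this, so the conjecture remains open.
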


Conjecture~\ref{conjecture.stable} should follow from the results in~\cite{McCammond.1991,McCammond.2001}.
Moreover, for each element $s\in \mathcal{B}(m,n)$, the language of words which 
represent $s$ is regular and can be described by a single Kleene expression without 
unions~\cite[Theorem~8.11]{McCammond.2001}.

\begin{definition}
If $S$ is a finite $\mathscr{J}$-above $A$-semigroup and $w\in A^+$, then the straight line automaton $\mathsf{str}^S(w)$
is the path $w$ together with the strong components (that is, $\mathscr{R}$-classes) of its prefixes.
\end{definition}

\begin{theorem}\cite{McCammond.1991,Guba.1993,Guba.1993a,McCammond.2001}
For $m\geqslant 3$, the set of all words equivalent to $w\in A^+$ under the relations $t^m=t^{m+n}$ is a regular language 
given by a straight line $\mathsf{str}^{\mathcal{B}(m,n)}(w)$.
\end{theorem}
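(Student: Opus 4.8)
The plan is to exhibit the set of words representing $w$ as the language accepted by a finite automaton assembled from the geometry already developed in this paper, and then to invoke McCammond's combinatorial analysis of overlapping powers to certify that this automaton is the correct one. First I would reduce to a finite semigroup. If $v\in A^+$ satisfies $[v]_{\mathcal{B}(m,n)} = [w]_{\mathcal{B}(m,n)}$, then writing $v = p q$ one gets $[p] \geqslant_{\mathscr{J}} [v] = [w]$, so every prefix $p$ of $v$ represents an element $\mathscr{J}$-above $w$; by the finite $\mathscr{J}$-above property (due to McCammond for $m\geqslant 6$ and to Guba and do~Lago for $m\geqslant 3$) there are only finitely many such $\mathscr{J}$-classes. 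Hence the congruence class of $w$ is contained in $\varphi^{-1}([w])$ for the canonical surmorphism $\varphi\colon A^+ \to S^{\mathscr{J}}_w$ onto the finite semigroup $S^{\mathscr{J}}_w$, which is already a regular language; it remains to match this set with the straight line automaton.

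Next I would make the straight line automaton $\mathsf{str}^{\mathcal{B}(m,n)}(w)$ from the preceding definition explicit and identify it with a sub-object of $\mathsf{Mc}\circ\mathsf{KR}(S^{\mathscr{J}}_w,A)$. Write $w = a_1 a_2 \cdots a_k$ with prefixes $p_0 = \mathbbm{1}, p_1,\ldots,p_k = w$. The states are the $\mathscr{R}$-classes of the $[p_i]$ together with the sink $\square$, and reading a letter $a$ from the state of $p_i$ either advances to the state of $p_{i+1}$ (when $[p_i a] = [p_{i+1}]$ in $S^{\mathscr{J}}_w$), loops back to the state of some $p_j$ with $j\leqslant i$ on the geodesic toward $p_i$ (when $[p_i a]$ stays in the $\mathscr{R}$-class of $[p_i]$), or drops to $\square$ (when $[p_i a] <_{\mathscr{J}} [p_i]$). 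This is exactly the forward/backward dichotomy of the spanning tree $\mathsf{T}$ described after Definition~\ref{definition.mccammond}. The claim to be proved is then that $v$ is accepted at the state of $p_k = w$ if and only if $[v]_{\mathcal{B}(m,n)} = [w]_{\mathcal{B}(m,n)}$.

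The hard part, and the place where the hypothesis $m\geqslant 3$ is used, is proving both directions of this equivalence. Soundness (accepted $\Rightarrow$ equivalent) is comparatively easy: each loop transition records an application of $t^m = t^{m+n}$ that absorbs a repeated block, so staying on the automaton never leaves the congruence class. Completeness (equivalent $\Rightarrow$ accepted) is the crux: one must show that any word equal to $w$ can be transported onto a path of $\mathsf{str}^{\mathcal{B}(m,n)}(w)$, equivalently that the rewriting system generated by $t^m \leftrightarrow t^{m+n}$ is Church--Rosser once $m\geqslant 3$, so that a normal form can always be reached by local moves. This is precisely McCammond's overlap analysis in~\cite{McCammond.1991}, refined by Guba~\cite{Guba.1993,Guba.1993a} and do~Lago~\cite{Lago.1996} and reorganized in~\cite{McCammond.2001}: the bound on $m$ rules out the pathological overlaps of $m$-th powers that would otherwise destroy confluence. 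I would quote this step rather than reprove it, since the combinatorics is precisely the content of those papers.

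Finally, once the automaton is verified, regularity is immediate because $S^{\mathscr{J}}_w$ is finite, and the union-free Kleene expression of~\cite[Theorem~8.11]{McCammond.2001} falls out of the tree-plus-back-edges shape of $\mathsf{str}^{\mathcal{B}(m,n)}(w)$ by the same Zimin-word bookkeeping used in Section~\ref{section.kleene}: each $\mathscr{R}$-class along the geodesic contributes a factor $\{a\in A\mid a\text{ stabilizes } [p_i]\}^\star$, inserted after $a_i$, and because no letter is stabilized at two distinct stages these factors nest without repetition, giving the estimate needed for~\eqref{equation.prob condition} as well.
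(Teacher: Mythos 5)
The paper does not prove this statement at all: it is imported verbatim from the cited literature (McCammond, Guba, do Lago), so there is no internal argument to compare yours against. Judged on its own terms, your reconstruction has the right architecture and is consistent with how the result is actually established in those papers. The reduction to regularity is correct and worth isolating: every prefix of a word in the congruence class of $w$ lies $\mathscr{J}$-above $[w]$, so the class is recognized by the finite semigroup $S^{\mathscr{J}}_w$, and recognizability by a finite semigroup is regularity. Your identification of $\mathsf{str}^{\mathcal{B}(m,n)}(w)$ with the path of $w$ together with the strong components of its prefixes, sitting inside $\mathsf{Mc}\circ\mathsf{KR}(S^{\mathscr{J}}_w,A)$, matches the paper's definition and its worked example for $(ab)^n$. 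And you correctly locate the only genuinely hard step---completeness, i.e.\ that a word equal to $w$ never leaves the strong components of the prefixes of $w$---in McCammond's overlap/confluence analysis and its extensions to $m\geqslant 3$; citing that rather than reproving it is exactly what the paper does for the whole theorem.

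Two soft spots. First, your soundness argument is more complicated than it needs to be and slightly misleading: once the automaton is recognized as a subgraph of the right Cayley graph of $S^{\mathscr{J}}_w$ (with states that project onto semigroup elements), any word accepted at the vertex of $w$ represents $[w]$ by definition of a Cayley graph; no ``loop records an application of $t^m=t^{m+n}$'' bookkeeping is required, and phrasing it that way obscures the fact that all the content is in the completeness direction. Second, the closing claim that ``no letter is stabilized at two distinct stages,'' so the Kleene factors nest without repetition and yield~\eqref{equation.prob condition}, is false in general (in the Tsetlin library $P(n)$ the letter $\pi_1$ stabilizes every prefix $\pi_1\cdots\pi_i$), and the conclusion you draw from it is precisely Conjecture~\ref{conjecture.kleene}, which the paper deliberately leaves open. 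The condition needed for~\eqref{equation.prob condition} is that each \emph{word} of $\mathsf{NF}^{-1}(w)$ is produced exactly once by the expression, not that letters fail to recur. This overreach is outside the scope of the stated theorem, so it does not invalidate your proof of it, but you should not present it as a consequence.
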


Given $\mathsf{str}^{\mathcal{B}(m,n)}(w)$ for $w \in A^+$, we can construct a semigroup $S_w$ as follows. Consider 
all factors of the accepted words of $\mathsf{str}^{\mathcal{B}(m,n)}(w)$ including the empty word. These are the elements 
of $S_w$, under the equivalence relation $w_1 \equiv w_2$ if the relation $t^m = t^{m+n}$ can be used, in addition to the 
sink state $\square$. Multiplication in $\mathsf{RCay}(\mathsf{KR}(S_w),A)$ is given by $w\cdot a = wa$ for 
$a \in A$ if $wa$ is another factor of an accepted word and $\square$ otherwise. 

\begin{example}
In $S=\mathcal{B}(n,1)$ with $A=\{a,b\}$, consider $\mathsf{str}^S(w)$ for $w=(ab)^n$:
\[
\begin{tikzpicture}[auto]
\node (A) at (0, 0) {$\bullet$};
\node (B) at (1.5,0) {$\bullet$};
\node (C) at (3,0) {$\bullet$};
\node (D) at (4.5,0) {$\bullet$};
\node (E) at (6,0) {$\bullet$};
\node (F) at (7.5,0) {$\bullet$};
\node (G) at (9,0) {$\bullet$};
\node (CC) at (3.75,0) {$\hdots$};
\draw[edge,thick] (A) -- (B) node[midway, above] {$a$};
\draw[edge,thick] (B) -- (C) node[midway, above] {$b$};
\draw[edge,thick] (D) -- (E) node[midway, above] {$a$};
\draw[edge,thick] (E) -- (F) node[midway, above] {$b$};
\path (F) edge[->,thick, bend right = 40] node[midway,below] {$a$} (G)
         (G) edge[->,thick, bend right = 40] node[midway,above] {$b$} (F);
\draw [thick,decoration={brace, mirror, raise=0.5cm}, decorate] (A) -- (F)
node [pos=0.5,anchor=north,yshift=-0.55cm] {$2n$}; 
\end{tikzpicture}
\]
Then $\mathsf{RCay}(\mathsf{KR}(S_w),A)$ is given by
\[
\begin{tikzpicture}[auto]
\node (A) at (0, 0) {$\mathbbm{1}$};
\node (B) at (1.5,1) {$\bullet$};
\node (C) at (3,1) {$\bullet$};
\node (D) at (4.5,1) {$\bullet$};
\node (E) at (6,1) {$\bullet$};
\node (F) at (7.5,1) {$\bullet$};
\node (G) at (9,1) {$\bullet$};
\node (CC) at (3.75,1) {$\hdots$};
\draw[edge,thick,blue] (A) -- (B) node[midway, above] {$a$};
\draw[edge,thick,blue] (B) -- (C) node[midway, above] {$b$};
\draw[edge,thick,blue] (D) -- (E) node[midway, above] {$a$};
\draw[edge,thick,blue] (E) -- (F) node[midway, above] {$b$};
\path (F) edge[->,thick, bend right = 40] node[midway,below] {$a$} (G)
         (G) edge[->,thick, bend right = 40] node[midway,above] {$b$} (F);
\node (Ap) at (0,-1) {};
\node (Bp) at (1.5,-1) {$\bullet$};
\node (Cp) at (3,-1) {$\bullet$};
\node (Dp) at (4.5,-1) {$\bullet$};
\node (Ep) at (6,-1) {$\bullet$};
\node (Fp) at (7.5,-1) {$\bullet$};
\node (Gp) at (9,-1) {$\bullet$};
\node (CCp) at (3.75,-1) {$\hdots$};
\draw[edge,thick,blue] (A) -- (Bp) node[midway, below] {$b$};
\draw[edge,thick,blue] (Bp) -- (Cp) node[midway, below] {$a$};
\draw[edge,thick,blue] (Dp) -- (Ep) node[midway, below] {$b$};
\draw[edge,thick,blue] (Ep) -- (Fp) node[midway, below] {$a$};
\path (Fp) edge[->,thick, bend right = 40] node[midway,below] {$b$} (Gp)
         (Gp) edge[->,thick, bend right = 40] node[midway,above] {$a$} (Fp);
\draw [thick,decoration={brace, mirror, raise=0.5cm}, decorate] (Ap) -- (Fp)
node [pos=0.5,anchor=north,yshift=-0.55cm] {$2n$}; 
\end{tikzpicture}
\]
with arrows going to $\square$ omitted. This graph is stable under the McCammond
expansion and satisfies~\eqref{equation.stable1}. The normal forms are
\begin{equation*}
	\mathcal{N}(\mathsf{KR}(S_w),A, \{\square\}) = \{ (ab)^j b, (ab)^k aa, (ba)^j a, (ba)^k bb \mid 
	0 < j \leqslant n, \; 0\leqslant k \leqslant n\}.
\end{equation*}
We have 
\begin{equation*}
\begin{split}
	\mathsf{NF}^{-1}((ab)^j b) &= (ab)^j b \qquad \qquad \text{for $0< j<n$,}\\
	\mathsf{NF}^{-1}((ab)^ka a) &= (ab)^k a a \; \; \quad \qquad \text{for $0\leqslant k<n$,}\\
	\mathsf{NF}^{-1}((ab)^n b) &= (ab)^n (ab)^\star b,\\
	\mathsf{NF}^{-1}((ab)^n aa) &= (ab)^n (ab)^\star a a,
\end{split}
\end{equation*}
and similarly with $a$ and $b$ interchanged. Hence the stationary distribution is given by
\begin{equation*}
\begin{aligned}
\Psi_{(ab)^j b} &= x_a^j x_b^{j+1}, & \Psi_{(ab)^k aa} &= x_a^{k+2} x_b^k 
\qquad \qquad (0<j<n,\; 0\leqslant k<n)\\
\Psi_{(ab)^n b} &= \frac{x_a^n x_b^{n+1}}{1-x_a x_b}, &
\Psi_{(ab)^n a a} &= \frac{x_a^{n+2} x_b^n }{1-x_a x_b},
\end{aligned}
\end{equation*}
and similarly with $a$ and $b$ interchanged. Using 
\[
	\sum_{j=1}^{n-1} x_a^j x_b^{j+1} = x_b \frac{1-x_a^n x_b^n}{1-x_a x_b} - x_b
	\quad \text{and} \quad
	\sum_{k=0}^{n-1} x_a^{k+2} x_b = x_a^2 \frac{1-x_a^n x_b^n}{1-x_a x_b}
\]
it can be checked that the stationary distributions add to one.
\end{example}

%%%%%%%%%%%%%%%%%%%%%%%%%%%%%%%%%%%%%%%%%%%%%%%%%%%
\bibliographystyle{alpha}
\bibliography{paper}{}

\end{document}